\newtheorem{teo}{Theorem}[section]
\newtheorem{prop}[teo]{Proposition}
\newtheorem{lema}[teo]{Lemma}
\newtheorem{cor}[teo]{Corollary}
\theoremstyle{definition} 
\newtheorem{defi}{Definition}[section]
\DeclareMathOperator{\G}{GL}
\DeclareMathOperator{\V}{SL}
\DeclareMathOperator{\W}{M}
\newcommand{\N}{_n(\mathbb{R})}
\newcommand{\NN}{_{n+1}(\mathbb{R})}
\newcommand{\E}{S^{n-1}}
\newcommand{\B}{B^n}
\newcommand{\Y}{_{n,+}(\mathbb{R})}
\DeclareMathOperator{\A}{vol}
\newcommand{\RR}{\times\mathbb{R}^n}
\DeclareMathOperator{\Id}{Id}
\DeclareMathOperator{\R}{Sym}
\newcommand{\s}{{}^{(s)}}
\DeclareMathOperator{\fa}{\bf{f1}}
\DeclareMathOperator{\fb}{\bf{f2}}
\DeclareMathOperator{\fc}{\bf{f3}}
\DeclareMathOperator{\fd}{\bf{f4}}
\DeclareMathOperator{\ga}{\bf{g1}}
\DeclareMathOperator{\gb}{\bf{g2}}
\DeclareMathOperator{\gc}{\bf{g3}}
\DeclareMathOperator{\gd}{\bf{g4}}
\DeclareMathOperator{\gee}{\bf{g5}}
\DeclareMathOperator{\interior}{int}
\author[]{Fernanda Moreira Baêta}
\address{\tiny{INSTITUT FÜR DISKRETE MATHEMATIK UND GEOMETRIE, TECHNISCHE UNIVERSITÄT WIEN, WIEDNER HAUPT-STRASSE 8-10/1046, 1040 WIEN, AUSTRIA}}
\email{fernanda.baeta@tuwien.ac.at}
\subjclass[2020]{}
\title[]{Constructive Decompositions of the Identity for Functional John Ellipsoids}
\pgfplotsset{compat=1.18} 
\begin{document}
\pretolerance10000
\begin{abstract}
We consider  functional ellipsoids in the sense defined by  Ivanov and  Naszódi \cite{14} and we study the problem of constructing a decomposition of the identity similar to the one given by Fritz John in his fundamental theorem.    
\end{abstract}
\maketitle

\section{Introduction and Main Results}
In 1948, Fritz John established that every convex body $K \subset \mathbb{R}^n$  contains a  largest volume ellipsoid. When this ellipsoid is the $n$-dimensional unit Euclidean ball $B^n$, the body $K$ is said to be in John position. John's theorem further guarantees the existence of a finite set of points $\{\xi_1,\dots, \xi_m\}\subset S^{n-1}\cap\partial K$, positive weights $\{c_1,\dots, c_m\}$, and a scalar $\lambda\neq 0$ such that the following conditions hold
\begin{align}\label{eq123}
\sum_{i=1}^m c_i \xi_i\otimes \xi_i=\lambda \Id \qquad  \mbox{and} \qquad \sum_{i=1}^m c_i \xi_i=0.
\end{align}
Here $v\otimes w$ denotes the rank-one matrix $vw^T$, $\Id$ is the $n\times n$ identity matrix, $S^{n-1}$ is the unit Euclidean sphere, and $\partial K$ is the  boundary of $K$ (see [\citealp{4}, Application 4, pag. 199 - 200]). 
This necessary condition also holds for the dual case where the unit Euclidean ball is the ellipsoid with minimum volume containing $K$. We say in this case that $K$ is in Löwner position. The decomposition  \eqref{eq123} is often referred to as the  decomposition of the identity.  As shown by Ball \cite{ball}, the existence of a measure $\mu_K$, supported on $S^{n-1}\cap \partial K$, satisfying
\begin{align}\label{dec}
\int_{\E}(\xi\otimes\xi)d\mu_K = \lambda \Id \qquad \mbox{and} \qquad \int_{\E}\xi d\mu_K = 0,
\end{align}
for some $\lambda\neq 0$, ensures that $K$ is in John position if $B^n \subseteq K$, or in Löwner position if $K\subseteq B^n$. A measure satisfying \eqref{dec} is called isotropic and centered, respectively.

The John and Löwner ellipsoids form a cornerstones of modern convex geometry, and many problems has been solved using properties of these  objects. The relationship between isotropic measures and extremal positions has been widely studied, including by  \cite{1009, 1010, 1011}. Extensions to related minimization problems appear in \cite{1012, 1013, 1014,  1016, 1015}. More recently, the theory of ellipsoids has been extended to the space of log-concave functions.

In 2018, Alonso-Gutiérrez, Gonzales Merino, Jiménez and Villa \cite{15} extended  the  notion of the John ellipsoid to  log-concave functions. A function $\varphi: \mathbb{R}^n\rightarrow (-\infty, +\infty]$ is  \textit{convex} if, for all $x,y\in\mathbb{R}^n$ and $\lambda\in [0,1]$,  
$$\varphi(\lambda x+(1-\lambda)y)\leq \lambda \varphi(x)+(1-\lambda)\varphi(y).$$ When $h=e^{-\varphi}$ for a convex function $\varphi$, $h$ is called a \textit{log-concave function}. Given an integrable log-concave function $h: \mathbb{R}^n \to [0, \infty)$, they defined its John ellipsoid as follows:  For a fixed constant $ \beta \in (0, \|h\|_\infty) $, consider the superlevel set $ \{x \in \mathbb{R}^n : h(x) \geq \beta\} $, which is a bounded convex set with non-empty interior. For each level $ \beta > 0 $, let $ \mathcal{E} $ be the maximal-volume ellipsoid contained within this superlevel set. They proved the existence of a unique height $ \beta_0 \in [0, \|h\|_\infty] $ that maximizes $ \beta_0 \A_n(\mathcal E) $, where $ \A_n $ denotes the Lebesgue measure. The John ellipsoid of $h$ is then defined as the function $ \mathcal{E}^{\beta_0}(x) = \beta_0 {1}_{\mathcal{E}}(x) $, where $ {1}_{\mathcal{E}} $ is the indicator function of $ \mathcal{E} $.

In 2019, Li, Schütt, and Werner \cite{el} introduced the dual notion of the Löwner ellipsoid for log-concave functions. They showed that for any non-degenerate, integrable log-concave function $ h $, there exists a unique pair $ (A_0, t_0) $, where $ A_0 $ is an invertible affine transformation and $ t_0 \in \mathbb{R} $, such that  
\[
\int_{\mathbb{R}^n} e^{-|A_0x|_2 + t_0} \, dx = \min \left\{ \int_{\mathbb{R}^n} e^{-|Ax|_2 + t} \, dx : e^{-|Ax|_2 + t} \geq h(x) \right\}.
\]
Here, $ |x|_2 $ denotes the Euclidean norm of $ x \in \mathbb{R}^n $. The function $ e^{-|A_0x|_2 + t_0} $ is called the  Löwner function of $h$.

We say that $h:\mathbb{R}^n\rightarrow \mathbb{R}$ is \textit{upper semicontinuous} if
$$ \limsup_{k\rightarrow +\infty} h(x_k)\leq h(x),$$
whenever $x_k\rightarrow x$ as $k\rightarrow +\infty$.  A log-concave function $h$ on $\mathbb{R}^n$ is said to be \textit{proper}  if $h$ is upper semicontinuous and has finite positive integral.  We will say that a function $f:\mathbb{R}^n\rightarrow \mathbb{R}$  is below a function $g:\mathbb{R}^n\rightarrow \mathbb{R}$  if $f(x)\leq g(x)$ for all $x\in\mathbb{R}^n$. 

Recently, in 2021,  Ivanov and  Naszódi \cite{14} also extended the notion of the John ellipsoid to the setting of logarithmically concave functions. Unlike the first ones, they defined a class of functions on $\mathbb{R}^n$  indexed by $s>0$. First they fix  a log-concave function $h:\mathbb{R}^n\rightarrow [0,\infty)$   and a parameter $s>0$. Later, they prove  that there exists (and is unique within the set of log-concave functions) a log-concave  function with the largest integral under the condition that it is pointwise less than or equal to $h^{1/s}$. This function is called the \textit{John $s$-function of $h$}. 
In [\citealp{14}, Theorem 6.1], it is shown that as $s\rightarrow 0$, the John $s$-functions  converge to characteristic functions of ellipsoids, thereby establishing a relationship between the first \cite{15} and second approach \cite{14}. Furthermore, the authors study the behavior of the John $s$-functions as $s \to \infty$, demonstrating that the limit may only be a Gaussian density, which is not necessarily unique.

Ivanov and Tsiutsiurupa \cite{functional_Lowner}, in 2021,  studied the  dual problem of the John $s$-function of a log-concave function defined in \cite{14} and introduced the Löwner $s$-function. They combined ideas  from \cite{el} and \cite{14} as following. For a function $\psi: [0,+\infty)\rightarrow (-\infty, +\infty]$, they considered the class of functions $\alpha e^{-\psi(|A(x-a)|_2)}$, where  $A$  is invertible, $\alpha>0$, and $a\in\mathbb{R}^n$,  as the class of ``affine'' positions of the function $x\mapsto e^{-\psi(|x|_2)}, x\in\mathbb{R}^n$.  Note that these problems are related because the classes of ``affine'' positions of the characteristic function of the unit ball were considered in \cite{15}, while  the classes of ``affine'' positions of the
 function $x\mapsto e^{-|x|_2}, x\in\mathbb{R}^n$, were studied in  \cite{el}. For a function $\psi: [0,+\infty)\rightarrow (-\infty, +\infty]$ such that $e^{-\psi(|t|)}, t\in \mathbb{R}$, is an upper semicontinuous log-concave function with a finite positive integral, and   an upper semicontinuous log-concave function $h:\mathbb{R}^n\rightarrow [0,+\infty)$ of finite positive integral, they studied the following optimization problem
\begin{align*}
\int_{\mathbb{R}^n} \alpha_0 e^{-\psi(|A_0(x-a_0)|_2)}dx= \min\left\{\int_{\mathbb{R}^n} \alpha e^{-\psi(|A(x-a)|_2)}dx:  h(x)\leq \alpha e^{-\psi(|A(x-a)|_2)}\right\}.
\end{align*}

The height function of the $(n+1)$-dimensional unit ball $B^{n+1}\subset \mathbb{R}^{n+1}$, given by $\hslash_{B^{n+1}}(x)=\sqrt{1-|x|_2^2}$ if $x\in B^n$ and 0 otherwise,  is a proper log-concave function.
The main advantage of the framework presented in \cite{14}  is that it implies  a ``decomposition of the identity'' as in \eqref{eq123}. Namely,

\begin{teo}[\cite{14}, Theorem 5.2]\label{decom_iden_h}
Let $h$ be a proper log-concave function on $\mathbb{R}^n$ and $s>0$. Assume $\hslash^s_{B^{n+1}}\leq h$. Then  the following conditions are equivalent:
\begin{enumerate}
\item[$(1)$]  The function $\hslash^s_{B^{n+1}}$ is the John $s$-function of $h$;
\item[$(2)$] There exist points $u_1,\dots, u_k \in B^n\subset \mathbb{R}^d$ and positive weights $c_1,\dots, c_k$, such that 
\begin{enumerate}
\item[$(a)$] $h(u_i)=\hslash^s_{B^{n+1}}(u_i)$ for all $i=1,\dots, k$;
\item[$(b)$] $\sum_{i=1}^k c_iu_i\otimes u_i=\Id$;
\item[$(c)$] $\sum_{i=1}^k c_i h(u_i)^{1/s} h(u_i)^{1/s}=s$;
\item[$(d)$] $\sum_{i=1}^k c_i u_i=0.$
\end{enumerate}
\end{enumerate}  
\end{teo}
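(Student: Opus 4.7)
The plan is to prove the two implications separately, following the template of Fritz John's classical argument adapted to the functional optimization problem of \cite{14}.

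For the implication $(2) \Rightarrow (1)$, I would view the decomposition (a)--(d) as a certificate of global optimality. Let $g$ be any log-concave function with $g \leq h^{1/s}$; the goal is to prove $\int g \leq \int \hslash_{B^{n+1}}$. At each contact point, feasibility together with (a) forces $g(u_i) \leq h^{1/s}(u_i) = \hslash_{B^{n+1}}(u_i) = \sqrt{1-|u_i|_2^2}$. Writing $g = e^{-\varphi}$ with $\varphi$ convex, these become pointwise lower bounds $\varphi(u_i) \geq -\tfrac{1}{2}\log(1-|u_i|_2^2)$. It is natural to lift the contact data to $\mathbb{R}^{n+1}$ by setting $\tilde u_i = (u_i,\, h(u_i)^{1/s}) \in S^n$: condition (b) encodes the matrix block, (c) the last diagonal entry, and (d) the first moment of this lifted system. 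The integral inequality is then obtained by combining convexity of $\varphi$ with a Prékopa--Leindler or Brascamp--Lieb type inequality whose input data is exactly the decomposition (b)--(d), yielding the desired comparison $\int g \leq \int \hslash_{B^{n+1}}$.

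For the implication $(1) \Rightarrow (2)$, I would carry out a first-order optimality analysis. Let $\mathcal{C} = \{u \in B^n : h(u) = \hslash^s_{B^{n+1}}(u)\}$ denote the contact set, and consider the family of admissible perturbations parametrized by a symmetric matrix $S$, a vector $v$, and a scalar $\tau$:
\[
f_\epsilon(x) = e^{\epsilon \tau}\,\hslash^s_{B^{n+1}}\bigl((\Id + \epsilon S)(x - \epsilon v)\bigr),
\]
which remain log-concave for $\epsilon$ small. A first-order Taylor expansion produces a linear functional $L(S,v,\tau) = \partial_\epsilon|_{\epsilon=0}\int f_\epsilon$, while the feasibility condition $f_\epsilon \leq h^{1/s}$ in a neighbourhood of each $u \in \mathcal{C}$ yields a linear inequality $\ell_u(S,v,\tau) \geq 0$ whose coefficients involve $u\otimes u$, $u$, and $\hslash_{B^{n+1}}(u)$. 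Optimality of $\hslash^s_{B^{n+1}}$ precludes the existence of $(S,v,\tau)$ with $L>0$ and $\ell_u \geq 0$ on $\mathcal{C}$; by a Farkas/Gordan type theorem of the alternative, $L$ must lie in the nonnegative conic hull of $\{\ell_u : u \in \mathcal{C}\}$. Carathéodory's theorem then reduces this to a finite convex combination with positive weights $c_1,\dots,c_k$ on points $u_1,\dots,u_k \in \mathcal{C}$; matching the coefficients of $S$, of $v$, and of $\tau$ in $L = \sum_i c_i \ell_{u_i}$ yields (b), (d), and (c) respectively, while membership in $\mathcal{C}$ yields (a).

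The main obstacle is the necessity direction. First, the chosen affine family of perturbations must be rich enough to detect the optimality of $\hslash^s_{B^{n+1}}$ among all log-concave competitors: this requires an approximation argument showing that any log-concave function strictly increasing $\int g$ while respecting $g \leq h^{1/s}$ can be captured to first order by some $(S,v,\tau)$. Second, the handling of contact points at which $h$ is not differentiable forces a subgradient formulation of $\ell_u$, and one must ensure the separation argument goes through in this generality. On the sufficiency side, the delicate point is to find the form of the Brascamp--Lieb type inequality whose equality case matches $\hslash_{B^{n+1}}$ precisely when (b)--(d) hold, so that the decomposition yields the sharp bound rather than a weaker estimate.
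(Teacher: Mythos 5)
This theorem is cited from Ivanov--Naszódi \cite{14} and is not reproved in this paper; the paper's contribution is a \emph{constructive} argument for the necessity direction $(1)\Rightarrow(2)$ only (Theorem~\ref{teo3.3}, realized via Theorems~\ref{teo7.3}--\ref{teo9.3}). Your $(1)\Rightarrow(2)$ outline is the classical variational route: perturb $\hslash^s_{B^{n+1}}$ within the ellipsoidal family, collect the linear feasibility constraints at contact points, and invoke a Farkas/Gordan alternative plus Carathéodory to extract finitely many weights. This is essentially the route of \cite{14} and is correct in spirit, but it is non-constructive exactly where the paper aims to be constructive: the separation theorem asserts that the $c_i$ exist without telling you how to compute them. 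The paper instead produces the isotropic measure as $\frac{1}{h^{1/s}}F'(\cdot)\,d\nu$ evaluated at the minimizer of an explicit finite-dimensional convex functional $\bar I_\nu$ on $\s\R_{n+1,0}(\mathbb{R})\RR$, obtained in the limit $r\to 1^-$ from the minimizers of the functionals $\bar L_r$. The Lagrange-multiplier computation of Lemma~\ref{lema_auxialiar} replaces your separation argument, and the family $\bar L_r$ together with the limiting convolution $F$ replaces your nonconstructive extraction of weights. The cost is considerably more technical work (Propositions~\ref{prop13.3}--\ref{prop15.3}, Lemmas~\ref{=1}--\ref{limit}) in exchange for an algorithmic description.

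Your $(2)\Rightarrow(1)$ plan has a genuine gap in its formulation, and the same misconception infects the ``main obstacle'' you flag for $(1)\Rightarrow(2)$. You set out to show $\int g\le\int\hslash_{B^{n+1}}$ for \emph{every} log-concave $g\le h^{1/s}$. This is false: take $g=h^{1/s}$, which is log-concave and trivially $\le h^{1/s}$; since $\hslash_{B^{n+1}}\le h^{1/s}$ by hypothesis, one would conclude $\int h^{1/s}\le\int\hslash_{B^{n+1}}\le\int h^{1/s}$ and hence $\hslash_{B^{n+1}}=h^{1/s}$ a.e., which is generically untrue. The John $s$-function is not extremal over all log-concave competitors below $h^{1/s}$; by definition it is the $s$-marginal (with density $\hslash_{\bar E}^{\,s}$, not $\hslash_{\bar E}$) of the maximal-$s$-volume $n$-symmetric \emph{ellipsoid} $\bar E\subseteq \s\bar h$. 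Sufficiency must therefore compare only against height functions $\hslash_{\bar E}$ of such ellipsoids with $\hslash_{\bar E}\le h^{1/s}$, and the quantity to be bounded is $\int\hslash_{\bar E}^{\,s}$. In that parametrized family the decomposition (b)--(d) yields the conclusion through an AM--GM and determinant inequality, as in the geometric John theorem, rather than through Brascamp--Lieb over an unrestricted class; and the ``richness of the affine family'' you worried about is a non-issue, since the affine family already exhausts the class of competitors.
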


Similar to the decomposition of the identity in the geometric case,  the authors of \cite{14} guarantee the existence of a measure satisfying $(a)-(d)$ in the theorem above, although the proof is not constructive.  In \cite{7}, the authors presented a constructive proof of John’s theorem in the geometric setting, using a simple finite-dimensional minimization problem.

In this context, assuming that $\hslash^s_{B^{n+1}}$ is the John $s$-function of $h$, the purpose of this paper is to present a constructive proof of the necessity part of Theorem \ref{decom_iden_h}, using  as in \cite{7} a simple finite dimensional minimization problem.

Throughout this paper, we denote by $\W_d(\mathbb{R})$  the vector space of $d \times d$  matrices, and by $\G_d(\mathbb{R}) $ and $ \V_d(\mathbb{R})\subseteq \W_d(\mathbb{R})$ the subsets of invertible matrices and matrices
with determinant 1, respectively.  The set of symmetric matrices in $\W_d(\mathbb{R})$ will be denoted by $\R_d(\mathbb{R})$, and Sym$_{d,+}(\mathbb{R})$ will denote  the subgroup of symmetric and positive-definite matrices. The  $(d-1)$-dimensional Hausdorff measure is denoted by  $\mathcal{H}^{d-1}$,
and int $A$ represents the interior of $A$.

The hyperplane in $\mathbb{R}^{n+1}$ spanned by the first $n$ standard basis vectors is identified with $\mathbb{R}^n$. We say that a set $\bar{C}\subset \mathbb{R}^{n+1}$  is $n$-symmetric if $(x,t)\in \bar{C}$ implies $(x,-t)\in \bar{C}$. Throughout this paper,  $\det$  denotes the determinant function defined on $\W\N$, while    the determinant function defined on  $\W\NN$   will be denoted by $\mbox{det}_{n+1}$.  The trace function in either matrix space $\W\N$ or $\W\NN$ will be denoted simple by tr.

Following \cite{14}, for a square matrix  $A\in\W\N$ and a scalar $\alpha\in \mathbb{R}$,  $A\oplus \alpha$ denotes the $(n+1)\times (n+1)$ matrix
\begin{displaymath}
A\oplus \alpha = \begin{pmatrix}
A&0\\
0&\alpha
\end{pmatrix}. 
\end{displaymath} 
Note that  $\mbox{det}_{n+1}(A\oplus\alpha)=\alpha\det(A)$ and $\mbox{tr}(A\oplus\alpha)=\alpha+\mbox{tr}(A)$. We introduce the $\frac{(n+1)(n+2)}{2}+n$ dimensional vector space
\begin{align*}
\mathcal{M}=\{(\bar{A},a): \bar{A}\in\R\NN, a\in\mathbb{R}^n\},
\end{align*}
the subspace
\begin{align*}
\mathcal{E}= \{(A\oplus\alpha, a)\in\mathcal{M}: A\in \R\N, \alpha>0\},
\end{align*}
and the convex cone
\begin{align*}
\mathcal{E}_+= \{(A\oplus\alpha, a)\in\mathcal{E}: A \ \mbox{is defined positive }, \alpha>0\}.
\end{align*}

While  Alonso-Gutiérrez, Gonzales Merino, Jiménez, and Villa in \cite{15}  define an ellipsoid as $A(\B)+a$,
where $A\in \W\N$ is a positive-definite matrix  and $a\in\mathbb{R}^n$, in \cite{14} they consider $n$-symmetric ellipsoids in $\mathbb{R}^{n+1}$. Since every $n$-symmetric ellipsoid in $\mathbb{R}^{n+1}$ is uniquely represented as
$$(A\oplus \alpha)B^{n+1}+a,$$
where $A\in\G\N$, thus by Polar Decomposition, $\mathcal{E}_+$ uniquely determines each $n$-symmetric ellipsoid of $\mathbb{R}^{n+1}$. Here $\bar{v}+a$, where $\bar{v}\in\mathbb{R}^{n+1}$ and $a\in\mathbb{R}^n$, denotes $\bar{v}+(a,0)$. To improve readability,  we use a bar to denote subsets of $\mathbb{R}^{n+1}$. 

Consider the  $(n+1) \times (n+1)$ matrix $M\oplus \beta$,
where $ M\in\W\N$  and  $\beta \in(0,+\infty)$. We define the \textit{$s$-determinant}  of $M\oplus \beta$ by
\begin{align*}
 ^{(s)}\mbox{det}_{n+1}(M\oplus \beta)= \beta ^s\det(M),
\end{align*}
and the \textit{$s$-trace} of $M\oplus \beta$ by
\begin{align*}
 ^{(s)}\mbox{tr}(M\oplus \beta)=s\beta + \mbox{tr}(M).
\end{align*}

Based on these definitions, we define the following sets 
$$ ^{(s)}\V\NN=\left\{M\oplus \beta\in \W\NN : {}^{(s)}\mbox{det}_{n+1}(M\oplus \beta) = 1\right\},$$
$$\s\R_{n+1,0}(\mathbb{R})=\{M\oplus \beta\in \R\NN: \s\mbox{tr}(M\oplus \beta)=0\},$$
and
$$^{(s)} \mathcal{E}_+= \{(A\oplus\alpha, a)\in\mathcal{M}: A\in \R\Y, \alpha>0, a\in\mathbb{R}^n  \mbox{ and } {}^{(s)}\mbox{det}_{n+1}(A\oplus \alpha)\geq 1 \}. $$
These sets are related to the  John $s$-function of the log-concave function $h$ defined in \cite{14}. In  Section \ref{preliminary}, these definitions will be clarified.

Since the goal of this is to construct a measure satisfying the items of condition (2) of Theorem \ref{decom_iden_h}, we  introduce the following definition.
\begin{defi}\label{s-isotropic}
A measure $\mu$ on the unit Euclidean ball $\B$ is said to be \textit{$s$-isotropic} if for some $\lambda\neq 0$, it holds that 
$$\int_{\B} (u\otimes u \oplus (1-|u|_2^2))d\mu= \lambda (\Id\oplus s),$$
and it is called \textit{centered} if
$$\int_{\B}ud\mu=0.$$
\end{defi}

Let $g: V\rightarrow \mathbb{R}$ be a function defined on a vector subspace $V\subseteq \mathbb{R}^d$.  We say that $g$ is \textit{coercive} if
$$\lim_{|x|_2\rightarrow \infty} g(x)=+\infty.$$
Note that every continuous and coercive function admits a global minimum. Denote the standard inner product by $\langle u, v\rangle =\sum_{i=1}^n u_iv_i$, and consider the set 
$$\mathcal{W}= \{F:\mathbb{R}\rightarrow [0,\infty):  \ F \mbox{ is  non-decreasing, convex, strictly convex in } [0,\infty), \mbox{ and } F'(0)>0\}.$$

Our main result is the following.
\begin{teo}\label{teo3.3}
Let $h:\mathbb{R}^n\rightarrow\mathbb{R}$ be a proper log-concave function and $\hslash_{B^{n+1}}$ its John $s$-function. Let  $\nu$ be any finite positive, non-zero measure in $\B$ with support inside  the subset $\Lambda=\{x\in \B: h(x)^{1/s}=\hslash_{B^{n+1}}(x)\}$, and let   $F\in \mathcal{F}$ be any $C^1$ function. Consider the convex functional $\bar{I}_\nu: \mathcal{E}\rightarrow \mathbb{R}$ defined by 
$$\bar{I}_\nu(M\oplus\beta,w)= \int_{\B}h(x)^{1/s}F\left(\dfrac{\langle x,Mx+w\rangle}{h(x)^{2/s}}+\beta\right)d\nu(x).$$ 
If the restriction of $\bar{I}_\nu$ to $\s\R_{n+1,0}(\mathbb{R})\RR$ is coercive, then for any global minimum $(M_0\oplus\beta_0,w_0)$, the measure 
$$\dfrac{1}{h(x)^{1/s}}F'\left(\dfrac{\langle x,M_0x+w_0\rangle}{h(x)^{2/s}}+\beta_0\right)d\nu(x)$$
is non-negative, non-zero, centered, and $s$-isotropic.
\end{teo}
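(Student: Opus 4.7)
The strategy is to minimize $\bar{I}_\nu$ on the closed linear subspace $\s\R_{n+1,0}(\mathbb{R})\RR$ and read off the centered and $s$-isotropic identities directly as the Lagrange multiplier conditions at the minimum. First I would verify that $\bar{I}_\nu$ is $C^1$ and convex on $\mathcal{M}$: for each fixed $x$, the argument $\frac{\langle x,Mx+w\rangle}{h(x)^{2/s}}+\beta$ of $F$ is affine in $(M,\beta,w)$, so convexity of $F$ yields convexity after integration against the positive measure $h^{1/s}\,d\nu$, while the $C^{1}$ hypothesis on $F$ combined with the finiteness of $\nu$ justifies differentiation under the integral. Because $\s\R_{n+1,0}(\mathbb{R})\RR$ is a closed subspace of the finite-dimensional space $\mathcal{M}$, coercivity and continuity yield a global minimizer $(M_0\oplus\beta_0,w_0)$.

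Next I would compute the Fréchet derivative at this minimizer. Setting $\rho_0(x):=\frac{\langle x,M_0x+w_0\rangle}{h(x)^{2/s}}+\beta_0$ and introducing the candidate measure
\[
d\mu(x)\;:=\;\frac{F'(\rho_0(x))}{h(x)^{1/s}}\,d\nu(x),
\]
the identity $\langle x,Hx\rangle=\mathrm{tr}(H\,x\otimes x)$ for symmetric $H$, together with the crucial equality $h(x)^{2/s}=1-|x|_2^2$ that holds on $\Lambda\supseteq\mathrm{supp}(\nu)$, gives after rearrangement
\[
D\bar{I}_\nu(H\oplus\gamma,v)\;=\;\mathrm{tr}\!\left(H\int x\otimes x\,d\mu\right)+\gamma\int(1-|x|_2^2)\,d\mu+\left\langle v,\int x\,d\mu\right\rangle
\]
for every direction $(H\oplus\gamma,v)\in\R\NN\RR$. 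Because the ambient subspace is cut out by the single linear equation $s\gamma+\mathrm{tr}(H)=0$, the Lagrange multiplier theorem provides $\lambda\in\mathbb{R}$ such that $D\bar{I}_\nu(H\oplus\gamma,v)=\lambda(s\gamma+\mathrm{tr}(H))$ for all $(H\oplus\gamma,v)$. Matching the $v$-, $H$-, and $\gamma$-components produces
\[
\int x\,d\mu=0,\qquad \int x\otimes x\,d\mu=\lambda\,\Id,\qquad \int(1-|x|_2^2)\,d\mu=\lambda s,
\]
which assemble into the centered condition together with the $s$-isotropic identity $\int(x\otimes x\oplus(1-|x|_2^2))\,d\mu=\lambda(\Id\oplus s)$.

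Non-negativity of $\mu$ is immediate: $F$ is non-decreasing, so $F'\geq 0$, while $h^{1/s}>0$ on $\Lambda$ and $\nu\geq 0$. The main obstacle will be showing that $\mu$ is non-zero, or equivalently that $\lambda\neq 0$, as required by the definition of $s$-isotropic (tracing the isotropic identity yields $\int d\mu=(n+s)\lambda$, so $\mu\neq 0\iff\lambda>0$). The plan is a contradiction argument invoking coercivity: if $\mu\equiv 0$, then $F'(\rho_0)=0$ $\nu$-a.e., and since $F'(0)>0$ and $F'$ is non-decreasing, this forces $\rho_0(x)\leq t^\ast<0$ uniformly on $\mathrm{supp}(\nu)$ for some $t^\ast$ where $F'$ vanishes; then $F$ is constant on $(-\infty,t^\ast]$ and $\bar{I}_\nu$ is locally constant at $(M_0\oplus\beta_0,w_0)$. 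Combined with convexity of $\bar{I}_\nu$ and the affine dependence of $\rho$ on the parameters, one aims to extract a nontrivial direction in $\s\R_{n+1,0}(\mathbb{R})\RR$ along which the flat set of $\bar{I}_\nu$ is unbounded, contradicting the coercivity hypothesis. Identifying this direction rigorously is the delicate technical step of the proof.
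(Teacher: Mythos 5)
Your overall route is the same as the paper's: differentiate $\bar{I}_\nu$, recognize $\s\R_{n+1,0}(\mathbb{R})\RR$ as the orthogonal complement of $(\Id\oplus s,0)$, apply the Lagrange multiplier theorem at the minimizer to get $\nabla\bar{I}_\nu(M_0\oplus\beta_0,w_0)=\lambda(\Id\oplus s,0)$, and read off the centered and $s$-isotropic identities from the components. Your derivative computation is correct, and your use of $h(x)^{2/s}=1-|x|_2^2$ on $\Lambda$ makes the $s$-isotropic form appear immediately, which is the same content as the paper's expression $x\otimes x\oplus h(x)^{1/s}h(x)^{1/s}$ since the integration is against a measure supported on $\Lambda$.

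The genuine gap is exactly where you flag it: showing $\lambda>0$ (equivalently $\mu\neq 0$). The paper does not argue this by contradiction from scratch; it invokes Theorem \ref{teo6.3}, which characterizes coercivity of $\bar{I}_\nu|_{\s\R_{n+1,0}(\mathbb{R})\RR}$ as the condition that, for every nonzero $(M\oplus\beta,w)$ in that subspace, the set where $\langle x,Mx+w\rangle/h(x)^{2/s}+\beta>0$ has positive $\nu$-measure. Applying this to the direction $(M_0\oplus\beta_0,w_0)$ (when it is nonzero) gives $\rho_0>0$ on a $\nu$-positive set, and since $F'\geq F'(0)>0$ on $[0,\infty)$, the measure $\mu$ is nonzero; then taking the trace of the isotropic identity gives $\lambda=\frac{1}{n+s}\int\frac{F'(\rho_0)}{h^{1/s}}d\nu>0$. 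Your contradiction plan can in fact be completed without Theorem \ref{teo6.3} and without any ``delicate'' search for a direction: if $\mu\equiv 0$ and $(M_0\oplus\beta_0,w_0)\neq(0,0,0)$, then $F'(\rho_0(x))=0$ for $\nu$-a.e.\ $x$, hence $\rho_0(x)$ lies in the interval $(-\infty,t^\ast]$ where $F'$ vanishes (and $t^\ast<0$ since $F'(0)>0$ and $F'$ is non-decreasing). Along the ray $t\mapsto t(M_0\oplus\beta_0,w_0)$, which stays in $\s\R_{n+1,0}(\mathbb{R})\RR$, the argument of $F$ is $t\rho_0(x)\leq t^\ast$ for all $t\geq 1$, so $F(t\rho_0(x))=F(\rho_0(x))$ and $\bar{I}_\nu$ is constant along that ray---contradicting coercivity. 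The trivial case $(M_0\oplus\beta_0,w_0)=(0,0,0)$ gives $\rho_0\equiv 0$ and $F'(0)>0$, so $\mu\neq 0$ directly. You should either cite Theorem \ref{teo6.3} as the paper does, or spell out this ray argument; as written, the proof is incomplete at precisely this point.
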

Note that $\Lambda$ is the set of points where the  function $h$ coincides with its John $s$-function.  
Assume that $\Lambda$ is finite and that  $\nu$ is the counting measure $c$. As a consequence of the previous theorem, we obtain the following result.

\begin{cor}
Let $h:\mathbb{R}^n\rightarrow\mathbb{R}$ be a proper log-concave function and $\hslash_{B^{n+1}}$ its John $s$-function. Assume
$$\Lambda=\{x\in \B: h(x)^{1/s}=\hslash_{B^{n+1}}(x)\} = \{x_1,\dots, x_m\}.$$
Let $F \in \mathcal{F}$ be any $C^1$ function. Consider the convex functional  $\bar{I}_c: \mathcal{E}\rightarrow \mathbb{R}$ defined by 
$$\bar{I}_c(M\oplus\beta,w) = \sum_{i=1}^m h(x_i)^{1/s}F\left(\dfrac{\langle x_i,Mx_i+w\rangle}{h(x_i)^{2/s}}+\beta\right).$$
If the restriction of $\bar{I}_c$ to $\s\R_{n+1,0}(\mathbb{R})\RR$ is coercive, then for any global minimum $(M_0\oplus\beta_0,w_0)$, the numbers 
$$c_i=\dfrac{1}{h(x_i)^{1/s}}F'\left(\dfrac{\langle x_i,M_0x_i+w_0\rangle}{h(x_i)^{2/s}}+\beta_0\right), i=1,\dots, m,$$
together with the vectors $x_i, i=1,\dots,m$,  satisfy  the conditions of item $(2)$ of Theorem \ref{decom_iden_h}.
\end{cor}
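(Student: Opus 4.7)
The plan is to apply Theorem \ref{teo3.3} with the specific choice $\nu=c$, the counting measure on the finite set $\Lambda=\{x_1,\dots,x_m\}$. With this choice, $\nu$ is a finite, positive, non-zero measure supported in $\Lambda$, and the functional $\bar I_\nu$ from Theorem \ref{teo3.3} agrees with the functional $\bar I_c$ appearing in the corollary. Since the restriction of $\bar I_c$ to $\s\R_{n+1,0}(\mathbb{R})\RR$ is assumed to be coercive, Theorem \ref{teo3.3} applies and delivers a global minimum $(M_0\oplus\beta_0,w_0)$ together with the measure $d\mu=\sum_{i=1}^m c_i\,\delta_{x_i}$, whose weights $c_i$ are exactly those defined in the corollary. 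The conclusion of Theorem \ref{teo3.3} states that this measure is non-negative, non-zero, centered, and $s$-isotropic.

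Next, I would translate these four properties into the conditions (a)--(d) of Theorem \ref{decom_iden_h}. Condition (a) is immediate from $x_i\in\Lambda$, and it also yields the useful identity $h(x_i)^{2/s}=1-|x_i|_2^2$. Centeredness of $\mu$ is precisely $\sum_{i=1}^m c_i x_i=0$, which is (d). The $s$-isotropic identity
$$\sum_{i=1}^m c_i\bigl(x_i\otimes x_i\oplus(1-|x_i|_2^2)\bigr)=\lambda\,(\Id\oplus s),$$
for some $\lambda\neq 0$, decomposes block-wise: the upper-left $n\times n$ block gives $\sum c_i x_i\otimes x_i=\lambda\,\Id$, and the lower-right scalar gives $\sum c_i(1-|x_i|_2^2)=\lambda s$, i.e.\ $\sum c_i h(x_i)^{2/s}=\lambda s$ after invoking (a).

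To finish, I would argue that $\lambda>0$ and then rescale. Non-negativity of the $c_i$ together with $|x_i|_2\leq 1$ imply $\lambda s=\sum c_i(1-|x_i|_2^2)\geq 0$, and combined with $\lambda\neq 0$ this forces $\lambda>0$. Replacing each $c_i$ by $c_i/\lambda$ preserves non-negativity and leaves (d) unchanged, while the two block identities become exactly (b) $\sum c_i x_i\otimes x_i=\Id$ and (c) $\sum c_i h(x_i)^{2/s}=s$. Finally, discarding the indices with $c_i=0$ yields the desired finite list with strictly positive weights satisfying all of (a)--(d). The only step requiring genuine thought is the positivity of $\lambda$, but this follows immediately from non-negativity of the weights and the fact that $x_i\in\B$; everything else amounts to reading off the block structure of the $s$-isotropic equation.
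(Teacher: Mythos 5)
Your derivation is correct and is precisely what the paper intends: the corollary is stated as a direct consequence of Theorem \ref{teo3.3} with $\nu$ the counting measure on $\Lambda$, and you read off (a) from membership in $\Lambda$, (d) from centeredness, and (b)--(c) from the block structure of the $s$-isotropy identity together with $h(x_i)^{2/s}=1-|x_i|_2^2$. Your rescaling by the Lagrange multiplier $\lambda$ (whose positivity you correctly deduce from $s>0$, $\lambda\neq 0$, and non-negativity of the weights) and the discarding of zero weights supply the normalization that the corollary's statement leaves implicit.
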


By Lemma \ref{unico_minimo} in Section 2, if $\bar{I}_\nu$ has an isolated local minimum,  it must be coercive. This implies that if a minimum is found, local coercivity can be established. Furthermore, the next theorem shows that the coercivity condition  in Theorem \ref{teo3.3} corresponds to a generic situation and  depends only   on the choice of  measure $\nu$. Using a similar idea to  items 1 and 2 of Theorem 1.6 in \cite{7}, we obtain the following result.  
\begin{teo}\label{teo6.3}
Let $h:\mathbb{R}^n\rightarrow\mathbb{R}$ be a proper log-concave function and $\hslash_{B^{n+1}}$ its John $s$-function. Consider $F$ as in Theorem \ref{teo3.3}. The following statements are equivalent
\begin{enumerate}
\item[(a)] The restriction of $\bar{I}_\nu$ to $\s\R_{n+1,0}(\mathbb{R})\RR$ is coercive;
\item[(b)] For every $(M\oplus\beta, w)\in\left(\s\R_{n+1,0}(\mathbb{R})\RR\right)\setminus \{(0,0)\}$, we have  
$$\nu\left(\left\{x\in\B: h(x)^{1/s}=\hslash_{B^{n+1}}(x) \ \mbox{and} \ \left(\dfrac{\langle x, Mx+w\rangle}{h(x)^{2/s}}+\beta\right)> 0\right\}\right)>0.$$
\end{enumerate}
\end{teo}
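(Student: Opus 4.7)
My plan is to prove the two implications separately, exploiting the $1$-homogeneity of $\xi=(M\oplus\beta,w)\mapsto g_\xi(x):=\frac{\langle x,Mx+w\rangle}{h(x)^{2/s}}+\beta$ under positive scalings of $\xi$. For (a)\,$\Rightarrow$\,(b), I would argue by contrapositive. If (b) fails, there exists a nonzero $\xi\in\s\R_{n+1,0}(\mathbb{R})\RR$ with $g_\xi(x)\leq 0$ for $\nu$-a.e.\ $x$ in $\mathrm{supp}(\nu)$. Then for every $t>0$, homogeneity and monotonicity of $F$ give $F(g_{t\xi}(x))=F(tg_\xi(x))\leq F(0)$ $\nu$-almost everywhere, whence $\bar{I}_\nu(t\xi)\leq F(0)\int_{\B}h^{1/s}\,d\nu$; the latter integral is finite since $h^{1/s}=\hslash_{B^{n+1}}\leq 1$ on $\mathrm{supp}(\nu)\subseteq\Lambda$. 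Since $\|t\xi\|\to\infty$ as $t\to\infty$ while $\bar{I}_\nu(t\xi)$ stays bounded, $\bar{I}_\nu$ is not coercive.

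For (b)\,$\Rightarrow$\,(a), I would argue by contradiction and finite-dimensional compactness. Suppose there is a sequence $\xi_k\in\s\R_{n+1,0}(\mathbb{R})\RR$ with $r_k:=\|\xi_k\|\to\infty$ along which $\bar{I}_\nu(\xi_k)$ remains uniformly bounded. After passing to a subsequence, $\xi_k/r_k\to\xi_*$ of unit norm. By (b), the set $E_*:=\{x\in\Lambda:g_{\xi_*}(x)>0\}$ has positive $\nu$-measure; on $E_*$ necessarily $h(x)>0$ (otherwise the denominator of $g_{\xi_*}$ vanishes and $g_{\xi_*}$ cannot be a finite positive number), and the linearity of $\xi\mapsto g_\xi(x)$ in $\xi$ yields $g_{\xi_k}(x)=r_k(g_{\xi_*}(x)+o(1))\to+\infty$ pointwise on $E_*$. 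The hypotheses on $F$ (convex with $F'(0)>0$) yield the linear lower bound $F(t)\geq F(0)+F'(0)t$ for $t\geq 0$, so $F(g_{\xi_k}(x))\to+\infty$ on $E_*$. Applying Fatou's lemma to the non-negative integrand $h^{1/s}F(g_{\xi_k})\mathbf{1}_{E_*}$ then gives $\liminf_k\bar{I}_\nu(\xi_k)=+\infty$, contradicting boundedness.

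The step I expect to be most delicate is the second implication: I must verify that $g_{\xi_k}\to+\infty$ pointwise on a set where $h>0$ and $\nu$ has positive mass. Boundary points of $\B$ that lie in $\Lambda$ (where $h=0$) are automatically excluded from $E_*$ because $g_{\xi_*}$ is not a finite positive number there, so no separate treatment is required; once this observation is in place, both the compactness/normalization step and the Fatou conclusion are routine.
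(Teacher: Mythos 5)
The paper does not write out a proof of Theorem \ref{teo6.3}, deferring instead to the analogous result in \cite{7}, so your argument is compared here on its own merits. Your blind proof is a correct reconstruction. The contrapositive for (a)$\Rightarrow$(b) exploits the $1$-homogeneity of $\xi\mapsto g_\xi(x)=\langle x,Mx+w\rangle/h(x)^{2/s}+\beta$ together with the monotonicity of $F$ and the uniform bound $h^{1/s}=\hslash_{B^{n+1}}\le 1$ on $\mathrm{supp}(\nu)\subseteq\Lambda$, which makes $\bar{I}_\nu$ bounded along the ray $t\xi$ and so destroys coercivity. For (b)$\Rightarrow$(a), normalizing $\xi_k/\|\xi_k\|$ and extracting a subsequential limit $\xi_*$ on the unit sphere of the finite-dimensional subspace $\s\R_{n+1,0}(\mathbb{R})\RR$, and then combining the linear lower bound $F(t)\ge F(0)+F'(0)\,t$ for $t\ge 0$ (which follows from convexity together with $F'(0)>0$) with Fatou's lemma on the positive-$\nu$-measure set $E_*$, is the standard argument for this type of equivalence and is sound. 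The one point you should spell out rather than leave as an aside is the treatment of boundary points $x\in\partial\B\cap\Lambda$, where $h(x)=0$: there $g_\xi(x)$ is $\pm\infty$ or indeterminate, so condition (b) must be read as requiring $g_\xi(x)$ to be a finite positive number, i.e.\ $\nu$ should put no mass on $\partial\B$. This convention is implicit in the paper anyway, since it is needed for $\bar{I}_\nu$ to be well-defined in the first place, but stating it removes the only ambiguity in an otherwise complete argument.
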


As mentioned earlier, we aim to construct a centered and $s$-isotropic measure from a log-concave function $h$ on $\mathbb{R}^n$ whose   John $s$-function is $\hslash_{B^{n+1}}$. We propose a simple finite-dimensional minimization
problem as following.  For any $r\in(1/2,1) $  and any function $\gamma:\mathbb{R}\rightarrow \mathbb{R}$, consider the family of functions $\gamma_r(s)= \gamma\left(\dfrac{s-1}{1-r}\right)$ and  two measurable functions $f,g:\mathbb{R}\rightarrow \mathbb{R}$.

We define the functional $\bar{L}_r:  \W\N\oplus (0,+\infty)\RR\rightarrow\mathbb{R}$ by
\begin{align*}
\bar{L}_r(A\oplus\alpha,v) = \dfrac{1}{1-r}\int_{\mathbb{R}^n}\int_0^\infty f_r\left(\dfrac{\alpha y}{h(Ax+v)^{1/s}}\right)g_r\left( \dfrac{|x|_2^2+y^2-1}{2h(x)^{2/s}}+1\right)dydx,
\end{align*}
where $\W\N\oplus (0,+\infty)$ denotes the set of matrices $A\oplus \alpha \in \W\NN$. For $\tilde{A}_r= \Id+(1-r)M$ and $\tilde{\alpha}_r= 1+ (1-r)\beta$, we define the functional $\bar{I}_r: \bar{B}_r\RR\subseteq \mathcal{E}\rightarrow \mathbb{R}$ by
\begin{align*}
\nonumber \bar{I}_r(M\oplus \beta, w)& =\dfrac{\tilde{\alpha}_r^{s-1}}{1-r}\int_{\mathbb{R}^n}\int_0^\infty  f_r\left(\dfrac{y}{h(x)^{1/s}}\right) g_r\left(\dfrac{|\tilde{A}_r^{-1}(x-(1-r)w)|_2^2+(\tilde{\alpha}_r^{-1}y)^2-1}{2h(\tilde{A}_r^{-1}(x-(1-r)w))^{2/s}}+1\right)dydx,
\end{align*}
where  $\bar{B}_r=\{M\oplus\beta\in \mathcal{E}: M\in\R\N \ \mbox{is such that $(\Id+(1-r)M)$ is invertible}\}$.

The functionals are related for $(A\oplus \alpha, w)\in \s \V\NN\RR$ as follows
\begin{align*}
 \bar{I}_r\left(\dfrac{A\oplus \alpha-\bar{\Id}}{1-r}, \dfrac{w}{1-r}\right)&= \bar{L}_r(A\oplus \alpha,w). 
\end{align*}

The idea is to minimize the functional $\bar{L}_r$ over all  $n$-
symmetric positions of the unit Euclidean ball $B^{n+1}$ and thus obtain a sequence of measures that weakly converge to a  centered and $s$-isotropic measure. Consider the following lemma. 

\begin{lema}\label{lema_auxialiar}
Let $(A_r\oplus \alpha_r,v_r)$ be  a global minimum of the restriction of $\bar{L}_r$ to $\mathcal{E}_+\cap (\s\V\NN\RR)$. Then,   there exists $\lambda_r\neq 0$ such that
\begin{align}
\nonumber (1-r)\lambda_r(\Id\oplus s) = \dfrac{\alpha_r^{s-1}}{1-r}\int_{\mathbb{R}^n}\int_0^\infty &(f')_r\left(\dfrac{ y}{h(x)^{1/s}}\right)g_r\left(\dfrac{|A_r^{-1}(x-v_r)|_2^2+(\alpha_r^{-1}y)^2-1}{2h(A_r^{-1}(x-v_r))^{2/s}}+1\right)\dfrac{y}{h(x)^{3/s}}\\ \nonumber 
& \times \left(- \nabla h(x)^{1/s}h(x)^{1/s}\otimes x\oplus h(x)^{1/s}h(x)^{1/s}\right)dydx,
\end{align}
and
\begin{align}
\nonumber 0  =  \dfrac{\alpha_r^{s-1}}{1-r}\int_{\mathbb{R}^n}\int_0^\infty &(f')_r\left(\dfrac{ y}{h(x)^{1/s}}\right)g_r\left(\dfrac{|A_r^{-1}(x-v_r)|_2^2+(\alpha_r^{-1}y)^2-1}{2h(A_r^{-1}(x-v_r))^{2/s}}+1\right)\dfrac{y}{h(x)^{3/s}} \\ \nonumber
&  \times\left(- \nabla h(x)^{1/s}h(x)^{1/s}\right)dydx. 
\end{align}
\end{lema}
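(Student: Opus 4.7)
The plan is to apply the Lagrange multiplier theorem to the constrained minimum $(A_r\oplus\alpha_r,v_r)$. The constraint set $\mathcal{E}_+\cap(\s\V\NN\RR)$ sits inside the open set $\R\Y\oplus(0,+\infty)\RR$ and is cut out by the single equality $C(A,\alpha):=\alpha^s\det A = 1$, so at an interior minimum only this equality is active. Jacobi's formula gives $\partial_A C = \alpha^s\det(A)\,A^{-1}$ and $\partial_\alpha C = s\alpha^{s-1}\det A$, which at $(A_r,\alpha_r)$ reduce to $A_r^{-1}$ and $s/\alpha_r$ using $\alpha_r^s\det A_r = 1$.

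First I would differentiate $\bar L_r$ under the integral sign. The integrand factors as $f_r(\alpha y/h(Ax+v)^{1/s})\,g_r(Q(x,y))$ with $Q(x,y):=(|x|_2^2+y^2-1)/(2h(x)^{2/s})+1$ independent of $(A,\alpha,v)$, so only the $f_r$ factor contributes. Applying the chain rule to $u=\alpha y/h(Ax+v)^{1/s}$ gives $\partial_\alpha u = y/h(Ax+v)^{1/s}$ and, in directions $M\in\R\N$ and $w\in\mathbb{R}^n$, the derivatives $-\alpha y\,\langle\nabla h(Ax+v)^{1/s},Mx\rangle/h(Ax+v)^{2/s}$ and $-\alpha y\,\langle\nabla h(Ax+v)^{1/s},w\rangle/h(Ax+v)^{2/s}$, respectively. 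Differentiation under the integral is justified by dominated convergence, using the regularity of $f,g$ and the effective support of $g_r$. The Lagrange multiplier theorem then supplies a scalar $\mu$ satisfying $\nabla_A \bar L_r = \mu A_r^{-1}$, $\partial_\alpha \bar L_r = \mu s/\alpha_r$, and $\nabla_v \bar L_r = 0$ at the minimum.

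Next I would perform the change of variables $\tilde x = A_r x + v_r$, $\tilde y = \alpha_r y$, whose Jacobian equals $\alpha_r^{s-1}$ thanks to the constraint $\alpha_r^s\det A_r = 1$. Applied to each of the three Lagrange identities, this substitution rewrites them in the integral forms displayed in the lemma: the argument of $g_r$ acquires the $A_r^{-1}(x-v_r)$ and $\alpha_r^{-1}y$ structure, the argument of $(f')_r$ becomes $y/h(x)^{1/s}$, and the algebraic prefactors organize into the matrix factor $\frac{y}{h(x)^{3/s}}\bigl(-\nabla h(x)^{1/s}h(x)^{1/s}\otimes x\oplus h(x)^{1/s}h(x)^{1/s}\bigr)$. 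Setting $(1-r)\lambda_r:=\mu$ produces the first displayed equation, whose $\Id$-block encodes the $A$-identity and whose $s$-scalar encodes the $\alpha$-identity; the transformed $v$-identity is exactly the second equation.

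The main obstacle will be the rigorous justification of differentiation under the integral sign when $h$ is merely upper semi-continuous and log-concave, so $\nabla h$ exists only almost everywhere: this is handled by exploiting the local Lipschitz regularity of $\log h$ on $\mathrm{int}\{h>0\}$ (which follows from convexity) together with a standard dominated-convergence majorant built from the growth behavior of $f$. To conclude that $\lambda_r\neq 0$, I would read off the $\alpha$-component of the first equation, whose right-hand side integrates the non-negative quantity $(f')_r(y/h(x)^{1/s})\,g_r(\cdot)\,y/h(x)^{1/s}$; non-triviality of this integral follows from the positivity of $(f')_r$ on a suitable region (inherited from the defining properties of $f$) together with the positivity of $g_r$ on its effective support.
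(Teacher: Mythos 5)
Your proposal follows essentially the same route as the paper: apply the Lagrange multiplier theorem to the single active constraint $\alpha^s\det A=1$ inside the open cone $\mathcal{E}_+$, differentiate $\bar L_r$ under the integral (only the $f_r$ factor moves), and change variables $\tilde x=A_rx+v_r$, $\tilde y=\alpha_r y$ with Jacobian reorganized via $\alpha_r^s\det A_r=1$ to produce the displayed integrals. One small bookkeeping slip: if $\mu$ is the multiplier in $\nabla\bar L_r=\mu\,\nabla(\alpha^s\det A)$, tracing the two factors of $\frac{1}{1-r}$ (one from the prefactor of $\bar L_r$, one from $f_r'=\frac{1}{1-r}(f')_r$) shows the lemma's $\lambda_r$ is exactly $\mu$, not $\mu/(1-r)$ as your assignment ``$(1-r)\lambda_r:=\mu$'' suggests; this is harmless since nonzeroness is preserved under the rescaling. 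Your observation that $\lambda_r\neq 0$ requires a separate positivity argument on the $\alpha$-component is a worthwhile addition the paper treats as implicit.
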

Note that if $x\in \interior \B$, then
$$\nabla \hslash_{B^{n+1}}(x)=-\dfrac{x}{\hslash_{B^{n+1}}(x)},$$
and  for every $x\in\interior \B$  such that $h(x) = \hslash_{B^{n+1}}^s(x)$, we have that $-\nabla h(x)^{1/s}h(x)^{1/s} = x$.
Now consider the set $\Lambda=\{x\in\B: h(x) = \hslash_{B^{n+1}}^s(x)\}$, a Borel set $B\subseteq{R}^n$, and the measure 
\begin{align}\mu_r(B)=\int_B\dfrac{\alpha_r^{s-1}}{1-r}\int_0^\infty (f')_r\left(\dfrac{ y}{h(x)^{1/s}}\right)g_r\left(\dfrac{|A_r^{-1}(x-v_r)|_2^2+(\alpha_r^{-1}y)^2-1}{2h(A_r^{-1}(x-v_r))^{2/s}}+1\right)\dfrac{y}{h(x)^{3/s}}dydx.\label{eq5.30}
\end{align}

It holds that
\begin{align}
\int_{\Lambda} \left(- \nabla h(x)^{1/s}h(x)^{1/s}\otimes x\oplus h(x)^{1/s}h(x)^{1/s}\right)d\mu_r(x)= \int_{\Lambda}\left(x\otimes x\oplus h(x)^{1/s}h(x)^{1/s}\right)d\mu_r(x),\label{eqI}
\end{align}
and 
\begin{align}
 \int_{\Lambda} \left(- \nabla h(x)^{1/s}h(x)^{1/s}\right)d\mu_r(x)=\int_{\Lambda}  xd\mu_r(x).\label{eqII}
\end{align}

We will show that the measure $\mu_r(\cdot)$ concentrates near $\Lambda$ as $r\rightarrow 1^-$ and weakly converges to a centered and $s$-isotropic measure. In order to construct this measure, we will assume the following properties for $f$ and $g$:

\begin{enumerate}
   \item[\textbf{f1}] $f$ is locally Lipschitz;
    \item[\textbf{f2}] $f$ is convex;
    \item[\textbf{f3}] $f(x)=0$ for $x\leq-1$;
    \item[\textbf{f4}] $f$ is strictly increasing in $[-1, \infty)$.
   \item[\textbf{g1}] $g$  is locally Lipschitz;
    \item[\textbf{g2}] $g$ is non-increasing;
    \item[\textbf{g3}] $g(x)=1$ for $x\leq-1$;
    \item[\textbf{g4}] $g(x)>0$ for $x\in(-1,1)$;
    \item[\textbf{g5}] $g(x)=0$ for $x\geq 1$.
\end{enumerate}

Two simple functions satisfying properties $\fa$ to $\gee$ are
\begin{equation*}
   f(x) = \begin{cases}
      0, & \text{if } x \leq -1 \\
      x+1, & \text{if } x > -1
   \end{cases}, 
   \quad
   g(x) = \begin{cases}
      1, & \text{if } x \leq -1 \\
      \frac{1 - x}{2}, & \text{if } x \in (-1, 1) \\
      0, & \text{if } x \geq 1
   \end{cases}.
\end{equation*}

This choice of functions guarantees the following results.
\begin{teo}\label{teo7.3}
Let $h:\mathbb{R}^n\rightarrow \mathbb{R}$ be a proper log-concave function and $\hslash_{B^{n+1}}$ its John $s$-function. Consider functions $f$ and $g$ that satisfy all the properties $\fa$ to $\gee$. Then for every $r\in (1/2,1)$, the restriction of $\bar{L}_r$ to $\mathcal{E}_+\cap (\s\V\NN\RR)$ has a unique minimum $(A_r\oplus\alpha_r,v_r)$, up to horizontal translation, with $\lim_{r\rightarrow 1^-}(A_r\oplus\alpha_r,v_r)=(\bar{\Id},0)$. Likewise, the restriction of $\bar{I}_r$ to
$$\dfrac{\mathcal{E}_+\cap (\s\V\NN\RR)-\bar{\Id}\RR}{1-r} $$ 
has the unique minimum $(M_r\oplus\beta_r,w_r)=\left(\dfrac{A_r\oplus\alpha_r-\bar{\Id}}{1-r},\dfrac{v_r}{1-r}\right)$, up to horizontal translation, with $\s\mbox{tr}\left(\dfrac{M_r\oplus\beta_r}{||M_r\oplus\beta_r||_F}\right)\rightarrow 0$ as $r\rightarrow 1^-$.
\end{teo}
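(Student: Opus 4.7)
The strategy parallels the constructive argument of \cite{7}, transferred via the affine identification $(A\oplus\alpha,v)\mapsto((A\oplus\alpha-\bar{\Id})/(1-r),v/(1-r))$ highlighted just before the statement. This map sends $\mathcal{E}_+\cap(\s\V\NN\RR)$ bijectively onto the domain $(\mathcal{E}_+\cap(\s\V\NN\RR)-\bar{\Id}\RR)/(1-r)$ and intertwines $\bar L_r$ with $\bar I_r$, so it suffices to minimize $\bar I_r$. The functional $\bar I_r$ is convex on $\bar B_r\RR$ because, for each fixed $(x,y)$, the argument of $f_r$ is affine in $(M\oplus\beta,w)$ while $g_r$ enters only as an $(M\oplus\beta,w)$-independent non-negative weight; hence convexity of $f$ ($\fb$) gives convexity of the integrand. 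Continuity and $C^1$ regularity follow from $\fa,\ga$ combined with dominated convergence, using the compact support of $g_r$ ($\gc$--$\gee$).

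For existence, I will verify coercivity of $\bar L_r$ on the constraint manifold $\mathcal{E}_+\cap(\s\V\NN\RR)$ by a direct analysis. Any escape to infinity in $(A\oplus\alpha,v)$, combined with the constraint ${}^{(s)}\mbox{det}_{n+1}(A\oplus\alpha)=1$, forces either some eigenvalue of $A$ to diverge or to approach $0$, $\alpha$ to leave a compact subinterval of $(0,\infty)$, or $|v|_2\to\infty$; in each case, properties $\fd,\gd$ together with the proper log-concavity of $h$ force $\bar L_r$ to blow up. Combined with convexity, this gives a global minimum. Strict convexity of $f$ on $[-1,\infty)$ ($\fd$) yields uniqueness up to the Hessian kernel, which consists exactly of horizontal translations of $w$; transporting back via the affine identification produces the unique minimizer $(A_r\oplus\alpha_r,v_r)$ of $\bar L_r$.

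For the limit $r\to 1^-$, the rescaled kernels $(1-r)^{-1}(f')_r$ and $g_r$ concentrate respectively on the loci $\{\alpha y=h(Ax+v)^{1/s}\}$ and $\{|x|_2^2+y^2\leq 1\}$. By Lemma \ref{lema_auxialiar} together with (\ref{eqI})--(\ref{eqII}), any weak-$\ast$ cluster point of $\mu_r$ is a non-zero, centered, $s$-isotropic measure supported on $\Lambda$. By Theorem \ref{decom_iden_h}, any such cluster measure certifies $\hslash_{B^{n+1}}$ as the John $s$-function of $h$ at the limiting position $(A_\infty\oplus\alpha_\infty,v_\infty)$; uniqueness of the John $s$-function from \cite{14} then forces $(A_\infty\oplus\alpha_\infty,v_\infty)=(\bar{\Id},0)$, and a uniform-in-$r$ coercivity estimate near $(\bar{\Id},0)$ upgrades subsequential convergence to convergence of the whole net. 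The claim on the normalized $s$-trace follows by Taylor-expanding the constraint ${}^{(s)}\mbox{det}_{n+1}(\bar{\Id}+(1-r)(M_r\oplus\beta_r))=1$ to second order, which gives $\s\mbox{tr}(M_r\oplus\beta_r)=O((1-r)\|M_r\oplus\beta_r\|_F^2)$, and combining with $(A_r\oplus\alpha_r,v_r)\to(\bar{\Id},0)$.

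I expect the main obstacle to be the limit analysis: obtaining a uniform coercivity bound for $\bar L_r$ near $(\bar{\Id},0)$ that is independent of $r$, and consistently handling the horizontal-translation ambiguity when passing to the limit (by restricting to a canonical centered representative). The remaining steps reduce to standard convex-optimization techniques combined with the concentration analysis of $f_r$ and $g_r$.
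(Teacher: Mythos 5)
The proposal has several genuine gaps.

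\textbf{Convexity and uniqueness.} You claim that $\bar I_r$ (or $\bar L_r$) is convex because ``the argument of $f_r$ is affine in $(M\oplus\beta,w)$ while $g_r$ enters only as a weight.'' This misreads both functionals. In $\bar I_r$ it is $g_r$, not $f_r$, whose argument depends on $(M\oplus\beta,w)$, and that dependence is far from affine (it involves $h(\tilde A_r^{-1}(\cdot))^{2/s}$ in the denominator). In $\bar L_r$ the argument of $f_r$ is $\alpha y/h(Ax+v)^{1/s}=\alpha y\,e^{\psi(Ax+v)/s}$, a product of a linear function of $\alpha$ with a convex function of $(A,v)$; such products are not jointly convex. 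The paper does not claim ordinary convexity of $\bar L_r$. Instead it proves the weaker ``convex$^*$'' inequality (Proposition~\ref{prop14.3.*}): $\bar L_r$ is convex along curves where the height parameter is interpolated geometrically, $\alpha\mapsto\alpha^\lambda\beta^{1-\lambda}$. Your uniqueness argument then compounds the error: you invoke ``strict convexity of $f$ on $[-1,\infty)$ ($\fd$),'' but $\fd$ asserts only that $f$ is strictly \emph{increasing}, and the model example $f(x)=x+1$ on $x>-1$ is linear, not strictly convex. Uniqueness in the paper comes from a different mechanism: two minimizers would give equality in the convex$^*$ inequality, their geometric interpolant would also be a minimizer, Lemma~\ref{=1} forces its $s$-determinant to equal $1$, and then equality in \eqref{det} forces $A=B$ followed by $\alpha=\beta$.

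\textbf{The limit $(A_r\oplus\alpha_r,v_r)\to(\bar{\Id},0)$.} Your proposed route—passing to weak-$\ast$ cluster points of $\mu_r$, invoking Lemma~\ref{lema_auxialiar} and \eqref{eqI}--\eqref{eqII}, and then using Theorem~\ref{decom_iden_h} to ``certify'' the John position—is circular. The identities \eqref{eqI}--\eqref{eqII} rewrite $-\nabla h^{1/s}h^{1/s}$ as $x$ only on $\Lambda$, i.e.\ only where $h^{1/s}$ coincides with $\hslash_{B^{n+1}}$, and the measure $\mu_r$ concentrates near $\Lambda$ precisely because the minimizing ellipsoid converges to $B^{n+1}$. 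Using that concentration to deduce the convergence assumes what you want to prove. The paper's argument is direct and avoids this: if a cluster point $(A^*\oplus\alpha^*,v^*)\neq(\bar{\Id},0)$ has $\s\det_{n+1}=1$, then by uniqueness of the John $s$-ellipsoid the set $((A^*\oplus\alpha^*)B^{n+1}+v^*)\setminus{}\s\bar h$ has positive measure; shrinking by $\rho<1$ and applying Fatou's lemma shows $\bar L_{r_k}\to\infty$ there, contradicting the uniform bound $\bar L_{r_k}(A_{r_k}\oplus\alpha_{r_k},v_{r_k})\le\bar L_{r_k}(\bar{\Id},0)\le C$ from Proposition~\ref{prop15.3}.

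The remaining ingredients of your outline are broadly aligned with the paper: coercivity of the (uniform-in-$r$) family of functionals (Proposition~\ref{prop13.3}) supplies existence, the affine identification really does intertwine $\bar L_r$ and $\bar I_r$, and a Taylor expansion of the $s$-determinant constraint at $\bar{\Id}$ yields the normalized $s$-trace claim (the paper only needs a first-order expansion with $o(\|\cdot\|_F)$ remainder, not second order, together with $(1-r)(M_r\oplus\beta_r)\to0$). But without the convex$^*$ property, Lemma~\ref{=1}, and the Fatou-based contradiction, the argument as written does not go through.
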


\begin{teo}\label{teo8.3}
Assume that all the properties $\fa$ to $\gee$ are satisfied. The functional $\bar{I}_r(M\oplus\beta,w)$ is extended continuously to $r=1$ as 
$$\bar{I}_1(M\oplus\beta, w) = \int_{\Lambda} h(x)^{1/s} F\left( \dfrac{\langle x,Mx+w\rangle}{h(x)^{2/s}}+\beta\right)dx,$$
where $\Lambda=\{x\in \B: h(x)^{1/s}=\hslash_{B^{n+1}}(x)\}$ and $F$ is the convolution $F(x)=f\ast\bar{g}(x),\bar{g}(x)=g(-x)$, satisfying the conditions of Theorem \ref{teo3.3}. Moreover, $\bar{I}_r\rightarrow \bar{I}_1$ as $r\rightarrow 1^-$, uniformly in compact sets.
\end{teo}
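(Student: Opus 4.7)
The plan is to perform a first-order expansion in $(1-r)$ in the integrand of $\bar I_r$, observe that the mass concentrates on the contact set $\Lambda$, and identify the limit. The key substitution is $y = h(x)^{1/s}(1+(1-r)u)$, which gives $dy = (1-r)h(x)^{1/s}\,du$. This absorbs the prefactor $1/(1-r)$, turns $f_r(y/h(x)^{1/s})$ into $f(u)$, and leaves a factor $h(x)^{1/s}$ in the outer measure.

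The central computation is a Taylor expansion of the argument of $g_r$ under this substitution. Setting $z = \tilde A_r^{-1}(x-(1-r)w)$ and using $\tilde A_r^{-1} = \Id - (1-r)M + O((1-r)^2)$, $\tilde\alpha_r^{-1} = 1 - (1-r)\beta + O((1-r)^2)$, together with the local Lipschitzness of $h^{2/s}$ on the interior of its support, one obtains
$$\frac{|z|_2^2 + (\tilde\alpha_r^{-1}y)^2 - 1}{2\,h(z)^{2/s}} \;=\; \frac{|x|_2^2 + h(x)^{2/s} - 1}{2\,h(x)^{2/s}} + (1-r)\!\left[u - \beta - \frac{\langle x,Mx+w\rangle}{h(x)^{2/s}}\right] + O((1-r)^2).$$
The leading term vanishes precisely on $\Lambda$ (where $|x|_2^2 + h(x)^{2/s} = 1$) and is strictly positive off $\Lambda$ inside the support of $h^{1/s}$. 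Dividing by $(1-r)$ and composing with $g$, the pointwise limit of $g_r$ is therefore $g(u-v(x))$ on $\Lambda$, where $v(x) = \beta + \langle x,Mx+w\rangle/h(x)^{2/s}$, and $0$ elsewhere. Integrating in $u$ yields $\int f(u)g(u-v)\,du = (f\ast\bar g)(v) = F(v)$, producing the claimed expression for $\bar I_1$.

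To upgrade pointwise convergence to uniformity on compact subsets of $(M\oplus\beta,w)$, I would use the compact supports of $f_r$ and $g_r$: they force $u\ge -1$ and bound the argument of $g_r$ below $2-r$, respectively, confining the effective $(x,y)$-integration to a fixed compact set depending only on the chosen parameter compact and on $\|h\|_\infty$. Local Lipschitzness of $f$ and $g$ then gives a uniform integrable majorant, and dominated convergence yields the uniform limit. Separately, one checks that $F = f\ast\bar g$ meets the hypotheses of Theorem \ref{teo3.3}: $C^1$-regularity from the local Lipschitzness of $g$; non-decreasingness from $F'(x) = -\int f(t)g'(t-x)\,dt\ge 0$; convexity from $F'' = f''\ast\bar g\ge 0$ as a distribution; and strict convexity on $[0,\infty)$ together with $F'(0)>0$ from the strict monotonicity of $f$ on $[-1,\infty)$ combined with the positivity of $-g'$ on a neighborhood of $0$.

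The main obstacle will be the careful bookkeeping of the $O((1-r))$ remainders at points where $h$ fails to be $C^1$, and controlling the integrand uniformly near $\partial B^n$ where $h(x)^{1/s}$ can degenerate. Both are handled using that log-concave functions are locally Lipschitz on the interior of their support, which keeps the error terms integrable and supplies the uniform bounds required for dominated convergence.
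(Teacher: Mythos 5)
Your proposal matches the paper's proof in all essentials: the same substitution $y = h(x)^{1/s}(1+(1-r)t)$, the same first-order Taylor expansion of the argument of $g_r$, the same split of $\mathbb{R}^n$ into the contact set $\Lambda$ and its complement (with the complement wiped out because the leading term blows up after division by $1-r$ and $g$ vanishes at $+\infty$), dominated convergence for the limit, and the same verification of the properties of $F = f\ast\bar g$ (non-negativity, monotonicity, convexity from $f''\geq 0$ or equivalently the paper's $F'' = \int f'(x-t)\bar g'(t)\,dt$, strict convexity on $[0,\infty)$, and $F'(0)>0$). The extraction of uniform compactness of the effective domain from the supports of $f_r$ and $g_r$ is also the same device the paper uses implicitly when applying dominated convergence along sequences $(M_k\oplus\beta_k,w_k)\to(M\oplus\beta,w)$, $r_k\to 1^-$.
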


To calculate the limit of the measure \eqref{eq5.30}, one needs to compute $(A_r\oplus \alpha_r, v_r)$ for  $r$ close to 1. The content of the last theorem guarantees that the necessary information for computing the $s$-isotropic measure is contained in $(M_0\oplus \beta_0,w_0)$, and hence  Theorem \ref{teo3.3} follows directly.

\begin{teo}\label{teo9.3}
Assume all the properties $\fa$ to $\gee$ are satisfied, and the function $\bar{I}_1$ restricted to $\s\R_{n+1,0}(\mathbb{R})\RR$ has a unique global minimum $(M_0\oplus\beta_0,w_0)$. Then 
$$ \left. \frac{\partial (A_r \oplus \alpha_r, v_r)}{\partial r} \right|_{r=1} = -(M_0 \oplus \beta_0, w_0). $$

In this case, if  $(\tilde{A}_r\oplus\tilde{\alpha}_r, \tilde{v}_r)$ is any curve in $\mathcal{E}_+$ of the form 
$$(\tilde{A}_r\oplus\tilde{\alpha}_r, \tilde{v}_r)= (\bar{\Id},0)+(1-r)(M_0\oplus\beta_0,w_0)+o(1-r),$$
the measure 
$$\dfrac{\tilde{\alpha}_r^{s-1}}{1-r}\int_0^\infty (f')_r\left(\dfrac{y}{h(x)^{1/s}}\right)g_r\left(\dfrac{|\tilde{A}_r^{-1}(x-v_r)|_2^2+(\tilde{\alpha}_r^{-1}y)^2-1}{h(\tilde{A}_r^{-1}(x-\tilde{v}_r))^{2/s}}+1\right)\dfrac{y}{h(x)^{3/s}}dydx$$
converges weakly to the centered and  $s$-isotropic 
 measure $$\dfrac{1}{h(x)^{1/s}}F'\left(\dfrac{\langle x, M_0x+w_0\rangle}{h(x)^{1/s}}+\beta_0\right)dx.$$
In particular, this is true for $(\tilde{A}_r\oplus\tilde{\alpha}_r,\tilde{v}_r)= (A_r\oplus\alpha_r,v_r)$, and for its linear part $(\tilde{A}_r\oplus\tilde{\alpha}_r,\tilde{v}_r)=(\bar{\Id}+(1-r)(M_0\oplus\beta_0), (1-r)w_0)$.
\end{teo}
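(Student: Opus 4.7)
The plan breaks into two parts: first, identifying the derivative of $(A_r\oplus\alpha_r, v_r)$ at $r=1$ via a $\Gamma$-convergence argument applied to the rescaled functionals $\bar{I}_r$; second, using this convergence to pass to the limit inside the measure in (\ref{eq5.30}), and identifying the limit by means of Theorem \ref{teo3.3}.

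For the first part, I would use the identity $(A_r\oplus\alpha_r, v_r) = (\bar{\Id}, 0) + (1-r)(M_r\oplus\beta_r, w_r)$ from Theorem \ref{teo7.3}, which rewrites the statement as: the rescaled minimizers $(M_r\oplus\beta_r, w_r)$ converge to the unique minimizer $(M_0\oplus\beta_0, w_0)$ of $\bar{I}_1$ restricted to $\s\R_{n+1,0}(\mathbb{R})\RR$. Theorem \ref{teo8.3} provides uniform convergence $\bar{I}_r\to\bar{I}_1$ on compact subsets of $\mathcal{E}$. The pulled-back constraint $\s\mbox{det}_{n+1}(\bar{\Id}+(1-r)(M\oplus\beta))=1$, after expansion in $(1-r)$, converges locally uniformly to the linear constraint $\s\mbox{tr}(M\oplus\beta)=0$. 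Two ingredients then force convergence of the minimizers: (i) uniform boundedness of $(M_r\oplus\beta_r, w_r)$, which I would obtain by comparing $\bar{I}_r(M_r\oplus\beta_r, w_r)$ with the value at a fixed admissible competitor and exploiting coercivity of $\bar{I}_1$ on $\s\R_{n+1,0}(\mathbb{R})\RR$ (which is guaranteed by the uniqueness hypothesis via Lemma \ref{unico_minimo}); (ii) any subsequential limit $(M_*\oplus\beta_*, w_*)$ lies in $\s\R_{n+1,0}(\mathbb{R})\RR$ by the limit of the constraints and, by uniform convergence on compact sets, minimizes $\bar{I}_1$ there. Uniqueness then identifies this limit with $(M_0\oplus\beta_0, w_0)$, and the derivative formula reads off directly from the relation $(A_r\oplus\alpha_r, v_r)=(\bar{\Id}, 0)+(1-r)(M_r\oplus\beta_r, w_r)$.

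For the weak convergence of measures, I would substitute the curve $(\tilde{A}_r\oplus\tilde{\alpha}_r, \tilde{v}_r)=(\bar{\Id},0)+(1-r)(M_0\oplus\beta_0, w_0)+o(1-r)$ into the density in (\ref{eq5.30}) and test against an arbitrary $\phi\in C_b(\mathbb{R}^n)$. A first-order Taylor expansion of $h$ along the curve and of the quadratic form $|\tilde{A}_r^{-1}(x-\tilde{v}_r)|_2^2$ reduces the argument of $g_r$ to $\frac{\langle x, M_0 x+w_0\rangle}{h(x)^{2/s}}+\beta_0+O(1-r)$ on compact sets where $h>0$. Rescaling the inner $y$-integration then converts the inner integral into the convolution $(f'\ast \bar{g})$, which equals $F'$ with $F=f\ast\bar{g}$ as in Theorem \ref{teo8.3}. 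Local Lipschitzness of $f,g$ from $\fa$ and $\ga$, combined with the compact support coming from $\fc$, $\gc$, $\gee$, provide a dominating function supported in a compact region and independent of $r$ near $1$, so dominated convergence upgrades pointwise convergence of the density to convergence of the integral against $\phi$. This gives weak convergence to $\frac{1}{h(x)^{1/s}}F'\!\left(\frac{\langle x, M_0 x+w_0\rangle}{h(x)^{2/s}}+\beta_0\right)dx$, and the support of the limit is contained in $\Lambda$ since outside $\Lambda$ the factor $g_r$ forces the integrand to zero in the limit $r\to 1^-$.

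The hardest step is controlling the minimizers $(M_r\oplus\beta_r, w_r)$: since the feasible set itself depends on $r$, one cannot appeal directly to $\Gamma$-convergence on a fixed domain, and one must carefully exploit the fact that the constraint manifolds $\{\s\mbox{det}_{n+1}(\bar{\Id}+(1-r)(M\oplus\beta))=1\}$ approach $\s\R_{n+1,0}(\mathbb{R})\RR$ locally uniformly while keeping enough uniform lower bound on $\bar{I}_r$ to rule out escape to infinity. Once these estimates are in hand, the final claim that the limit measure is centered and $s$-isotropic follows from Theorem \ref{teo3.3} applied with $\nu$ equal to Lebesgue measure restricted to $\Lambda$ and $(M_0\oplus\beta_0, w_0)$ as the global minimum, since by construction the limit density is precisely the one asserted in that theorem.
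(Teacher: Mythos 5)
Your overall architecture matches the paper's: write $(A_r\oplus\alpha_r,v_r)=(\bar{\Id},0)+(1-r)(M_r\oplus\beta_r,w_r)$, show $(M_r\oplus\beta_r,w_r)\to(M_0\oplus\beta_0,w_0)$ via boundedness plus uniform convergence of $\bar{I}_r$ on compacts plus uniqueness, read off the derivative, and then pass to the limit in the density by the substitution $y=(1+(1-r)t)h(x)^{1/s}$, splitting over $\Lambda$ and $\Lambda^c$ and invoking dominated convergence. Your final step also has an honest variation worth noting: you propose to certify that the limit density is centered and $s$-isotropic by invoking Theorem~\ref{teo3.3} with $\nu$ the Lebesgue measure on $\Lambda$, using that coercivity of $\bar{I}_1$ follows from the uniqueness hypothesis and Lemma~\ref{unico_minimo}. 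The paper instead passes to the limit in the Lagrange multiplier identities of Lemma~\ref{lema_auxialiar} together with \eqref{eq5.30}, \eqref{eqI}, \eqref{eqII}. Both closures are legitimate; yours is arguably cleaner if one already believes the weak convergence, while the paper's is more self-contained.

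There is, however, a genuine gap in the uniform boundedness of $(M_r\oplus\beta_r,w_r)$. You propose to derive it from ``comparing $\bar{I}_r$ at the minimizer with a fixed competitor'' and coercivity of $\bar{I}_1$, but this does not close on its own: Theorem~\ref{teo8.3} gives $\bar{I}_r\to\bar{I}_1$ only uniformly on \emph{compact} sets, so a large value of $\bar{I}_1$ far from the origin puts no lower bound on $\bar{I}_r$ there. In other words, from $\bar{I}_r(M_r\oplus\beta_r,w_r)\leq C$ and coercivity of $\bar{I}_1$ alone you cannot rule out that the $\bar{I}_r$-minimizer drifts to infinity as $r\to 1^-$, precisely because $\bar{I}_r\neq\bar{I}_1$ off compacts. (Relatedly, the uniform coercivity of $\bar{L}_r$ in Proposition~\ref{prop13.3} controls $(A_r\oplus\alpha_r,v_r)$ but not the rescaled variables $(M_r\oplus\beta_r,w_r)$, whose norm is $\|A_r\oplus\alpha_r-\bar{\Id}\|/(1-r)$.) The missing ingredient is the convex$^*$ property of $\bar{L}_r$ from Proposition~\ref{prop14.3.*}: if $(M_r\oplus\beta_r,w_r)$ leaves $\bar{B}_{2R}$, one uses the curve $\lambda\mapsto\bigl((\lambda A_r+(1-\lambda)\Id)\oplus\alpha_r^{\lambda},\lambda v_r\bigr)$ to produce a competitor on $\partial\bar{B}_{2R}$ whose $\bar{L}_r$-value is controlled by $\max\{\bar{L}_r(\bar{\Id},0),\bar{L}_r(A_r\oplus\alpha_r,v_r)\}\leq C$, and this contradicts $\bar{I}_r\approx\bar{I}_1\geq C+1$ on $\partial\bar{B}_{2R}$, where uniform convergence \emph{does} apply. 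You flag this as the hardest step, correctly, but without the convex$^*$ mechanism the escape-to-infinity scenario is not excluded, and the derivative formula and the subsequent weak convergence are left unproved.
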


The paper is organized as follows: In Section \ref{preliminary}, we introduce the theory of the John $s$-function as defined by Ivanov and Naszódi in \cite{14} and recall some basic results. In Section 3, we prove some technical properties of the functionals $\bar{L}_r$ and $\bar{I}_r$. Finally, in Section 4, we prove the main theorems.

\section{Notation and Preliminary Results}\label{preliminary}
See \cite{14} for more details on functional ellipsoids, and \cite{Vitor,72} for basic facts on convexity. Let $s>0$. For every $x \in \mathbb{R}^n$, we denote by $l_x$ the line in $\mathbb{R}^{n+1}$ perpendicular to $\mathbb{R}^n$ at $x$, and by $l$ the one-dimensional Lebesgue measure on $l_x$. 
The $s$-\textit{volume} of an $n$-symmetric Borel set $\bar{C}$ is defined by
\begin{align*}
^{(s)}\mu(\bar{C})= \int_{\mathbb{R}^n}\left[\dfrac{1}{2}l(\bar{C}\cap l_x)\right]^s dx,
\end{align*} 
and the $s$-\textit{marginal} of a Borel set $B\subset\mathbb{R}^{n}$ is defined by
\begin{align*}
^{(s)}\mbox{marginal}(\bar{C})(B)= \int_B\left[\dfrac{1}{2}l(\bar{C}\cap l_x)\right]^sdx,
\end{align*}
as defined in \cite{14}. Note that this marginal is a measure on $\mathbb{R}^n$. A straightforward computation shows that for any matrix $\bar{A}=A\oplus \alpha$ and  any $n$-symmetric set $\bar{C}$ in $\mathbb{R}^{n+1}$, the following holds
\begin{align}\label{eq2.2.3}
 ^{(s)}\mu(\bar{A}\ \bar{C})= 
 |\det(A)| |\alpha|^s  \ ^{(s)}\mu(\bar{C}).
\end{align}
Using \eqref{eq2.2.3}, the $s$-volume of an $n$-symmetric ellipsoid can be expressed as
\begin{align*}
^{(s)}\mu((A\oplus \alpha)B^{n+1}+a) = \ ^{(s)}\mu(B^{n+1})\alpha^s \mbox{det}(A),
\end{align*}
for any $(A\oplus \alpha,a)\in\mathcal{E}$.
Now, let $h:\mathbb{R}^n\rightarrow [0,+\infty)$  be a function and  let $s>0$. In \cite{14}, the  \textit{$s$-lifting} of $h$ is defined  as the $n$-symmetric set in
$\mathbb{R}^{n+1}$ given by
\begin{align*}
^{(s)}\bar{h}=\{(x,\xi)\in \mathbb{R}^{n+1}: |\xi|\leq h(x)^{1/s}\},
\end{align*}
and this set is such that $^{(s)}\mbox{marginal}\left( ^{(s)}\bar{h}\right)$ is the measure on $\mathbb{R}^n$ with density $h$. According to [\citealp{14}, Theorem 4.1], for $s>0$  and a proper log-concave function $h$ on $\mathbb{R}^n$, there exists a unique $n$-symmetric ellipsoid contained in the $s$-lifting of $h$  that has the maximum $s$-volume. This ellipsoid in $\mathbb{R}^{n+1}$ is called the \textit{John $s$-ellipsoid} of $h$ and  is denoted  by $\bar{E}(h,s)$. The $s$-marginal of $\bar{E}(h,s)$ is called \textit{John $s$-function} of $h$.

Let $(A\oplus \alpha, a)\in\mathcal{E}_+$.  We define the \textit{height} of the ellipsoid $\bar{E} = (A \oplus \alpha)B^{n+1} + a$ as $\alpha$, and the \textit{height function} of $\bar{E}$ as
\begin{align*}
\hslash_{\bar{E}}(x) = 
\begin{cases}
\alpha \sqrt{1 - \langle A^{-1}(x-a), A^{-1}(x-a) \rangle}, & \text{if } x \in A\B + a \\
0, & \text{otherwise}
\end{cases}.
\end{align*}
Note that the height function of an ellipsoid is a proper log-concave function and $\bar{E}\subset \s\bar{h}$ holds if and only if $\hslash_{\bar{E}} (x+a)\leq h(x+a)^{1/s}$ for all $x\in A\B$. Note that $\hslash_{\bar{E}(h,s)}$ is the density of  $s$-marginal of $\bar{E}(h,s)$.

The set $\s\V\NN$, defined in the previous section, is relevant because it allows us to consider  ellipsoids contained in the  \textit{$s$-lifting} of $h$ that have the same $s$-volume as the $(n+1)$-dimensional unit Euclidean ball $B^{n+1}$, since we are assuming that  $\hslash_{B^{n+1}}^s$ is the  John $s$-function of the log-concave function $h$. Additionally,  the set $\s\R_{n+1,0}(\mathbb{R})$ is significant because it forms the orthogonal complement of $(\Id\oplus s, 0)$ in $\mathcal{E}$ and this fact is used in the proof of Theorem \ref{teo3.3}. In Theorem \ref{teo7.3}, the convergence $(A_r \oplus \alpha_r, v_r) \to (\bar{\Id}, 0)$ implies that the position of the $s$-lifting of $h$ that minimizes $\bar{L}_r$ converges to the John $s$-position of $h$ as $r \to 1^-$.

Two inequalities that will be auxiliary in this paper are that for $A,B\in\G\N$  symmetric and positive-definite linear matrices and  for $\lambda\in(0,1)$, it holds
\begin{align}\label{det}
\det((1-\lambda)A+\lambda B)\geq \det(A)^{1-\lambda}\det(B)^{\lambda},
\end{align}
with equality  if and only if $A=B$. And for  $a,b>0$, we have 
\begin{align}\label{media}
\lambda a +(1-\lambda)b \geq a^{\lambda}b^{1-\lambda},
\end{align}
with equality if and only if $a = b$. This inequality is the well-known arithmetic-geometric inequality, or AM-GM for short.

Using these inequalities, it follows that $^{(s)}\mathcal{E}_+$ is a convex set. Indeed, for $A\oplus\alpha, B\oplus \beta \in \ ^{(s)} \mathcal{E}_+$ and $\lambda\in [0,1]$, we have 
\begin{align}
\nonumber \s \mbox{det}_{n+1}(\lambda(A\oplus\alpha)+(1-\lambda)(B\oplus \beta)) 
& = (\lambda \alpha+ (1-\lambda)\beta)^s \det(\lambda A + (1-\lambda)B)\\ \label{eqconvex*}
& \geq (\alpha^\lambda\beta ^{1-\lambda})^s\det(A)^\lambda\det(B)^{1-\lambda}\\ \nonumber 
& = (\alpha^s\det(A))^\lambda (\beta^s\det(B))^{1-\lambda}\\ \label{eqconve*1}
& \geq 1.
\end{align}

A direct computation shows that a function $h:\mathbb{R}^n\rightarrow \mathbb{R}$ is log-concave if and only if 
\begin{align}\label{log}
   h(\lambda x+(1-\lambda)y)\geq  h(x)^\lambda h(y)^{(1-\lambda)} 
\end{align}
for any $x,y\in \mathbb{R}^n$ and every $\lambda\in (0,1)$. For $\bar{A}\in \W_{n+1}(\mathbb{R})$,  the operator norm and the Frobenius norm are defined as  
$$||\bar{A}||_{op}= \sup_{|v|_2=1} |\bar{A} v|_2, \ \ ||\bar{A}||_F=\sqrt{\mbox{tr}(\bar{A}^T A)},$$
respectively. We equip $\mathcal{M}$ and its subspaces with an inner product defined as
$$\langle (\bar{A},v),(\bar{B},w)\rangle= \langle \bar{A}, \bar{B}\rangle_F + \langle v,w\rangle = \sum_{i,j}A_{i,j}B_{i,j}+ \sum_i v_iw_i,$$
and for simplicity, we write
\begin{align}\label{produto}
\langle(\bar{A},v), (\bar{B},w)\rangle = \langle \bar{A}, \bar{B}\rangle + \langle v,w\rangle.
\end{align}
For $(\bar{A}, v)\in \W_{n+1}(\mathbb{R})\RR$, we use $||(\bar{A}, v)||= \sqrt{||\bar{A}||_F^2+|v|_2^2}$ which is
the norm induced by the inner product \eqref{produto}.

Since $h$ is assumed to be a proper log-concave function, then there exists a convex function $\psi$ such that $h=e^{-\psi}$, satisfying the following properties:
\begin{itemize}
\item $\lim_{|x|_2\rightarrow \infty}\psi(x)=+\infty$ (otherwise, the integral of $e^{-\psi(x)}$ diverges to $+\infty$);
\item The set $\{x\in \mathbb{R}^n:  \psi(x)<+\infty\}$ has positive measure (otherwise, the integral of $e^{-\psi(x)}$  is  zero). 
\end{itemize}

A well-known result that will be useful is the  Taylor expansion (see, for example, [\citealp{taylor}, Theorem 5.21]). This result says that  a function   $f:\mathbb{R}^n\rightarrow \mathbb{R}$ of class $C^1$ around $x_0$ admits at $x_0$ the following Taylor expansion of order one 
\begin{align*}
f(x)=f(x_0)+\nabla f(x_0)(x-x_0)+o(|x-x_0|_2),
\end{align*}
where $x\rightarrow x_0, |x-x_0|_2$ denotes the Euclidean norm of $x-x_0, \lim_{x\rightarrow x_0}\frac{o(|x-x_0|_2)}{|x-x_0|_2}=0$ and $\nabla f(x_0)$ is the gradient of $f$ at $x_0$. 
Another useful result is that for  $u,v\in\mathbb{R}^d$ and $T\in \W_d(\mathbb{R})$, it holds
\begin{align}\label{eq}
\langle Tu, v\rangle= \langle T, v\otimes u\rangle. 
\end{align}

\begin{lema}[\cite{7}, Lemma 2.3]
A convex function $\psi:\mathbb{R}^d\rightarrow \mathbb{R}$ with an isolated local minimum must be coercive.\label{unico_minimo}
\end{lema}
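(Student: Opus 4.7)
The plan is to argue the contrapositive by combining two standard facts about convex functions on $\mathbb{R}^d$: every local minimum is automatically a global minimum, and the set of global minimizers is convex. Together these show that an isolated local minimum $x_0$ must be the unique global minimizer of $\psi$, and then a one-dimensional convexity argument along rays from $x_0$ forces coercivity.

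First I would verify uniqueness of the minimizer. If $x_0$ is a local minimum, then for any $y\in\mathbb{R}^d$ and sufficiently small $\lambda\in(0,1)$, convexity gives
$$\psi(x_0)\leq \psi((1-\lambda)x_0+\lambda y)\leq (1-\lambda)\psi(x_0)+\lambda\psi(y),$$
so $\psi(x_0)\leq \psi(y)$ and $x_0$ is a global minimum. The set of global minimizers of $\psi$ is convex; if it contained any point $x_1\neq x_0$, the entire segment $[x_0,x_1]$ would consist of global minima, all of which are local minima, contradicting the assumption that $x_0$ is isolated. Hence $x_0$ is the unique minimizer.

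Next, I would argue by contradiction and suppose $\psi$ is not coercive, so that there is a sequence $\{x_k\}$ with $|x_k|_2\to\infty$ and $\psi(x_k)\leq M$ for some constant $M$. Setting $u_k=(x_k-x_0)/|x_k-x_0|_2$ and extracting a subsequence along which $u_k\to u$ with $|u|_2=1$ (by compactness of the unit sphere), for any fixed $t>0$ and all $k$ large enough that $|x_k-x_0|_2>t$, the point $y_k=x_0+tu_k$ lies on the segment $[x_0,x_k]$, and convexity yields
$$\psi(y_k)\leq \psi(x_0)+\frac{t}{|x_k-x_0|_2}\bigl(\psi(x_k)-\psi(x_0)\bigr)\longrightarrow \psi(x_0) \text{ as } k\to\infty.$$
Since any real-valued convex function on $\mathbb{R}^d$ is continuous, $\psi(y_k)\to\psi(x_0+tu)$, so $\psi(x_0+tu)\leq \psi(x_0)$. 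Combined with the fact that $\psi(x_0)=\min\psi$, this forces $\psi(x_0+tu)=\psi(x_0)$ for every $t\geq 0$, so the whole ray $\{x_0+tu:t\geq 0\}$ consists of global minimizers. This contradicts the uniqueness established above, completing the proof.

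The only mildly delicate point is the interplay between the subsequential limit $u_k\to u$ and the continuity of $\psi$ at each point $x_0+tu$; both are standard (compactness of $S^{d-1}$ and continuity of finite convex functions on $\mathbb{R}^d$), and the geometric content — that a non-coercive convex function must flatten out along some ray emanating from any minimizer — is the real engine of the argument.
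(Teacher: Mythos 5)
Your argument is correct. The paper itself does not prove this lemma --- it is imported verbatim from \cite{7} as Lemma~2.3 --- so there is no in-text proof to compare against, but your two-step derivation (a local minimum of a convex function is global; the minimizer set is convex, so an isolated local minimum is the unique global minimizer; then a ray argument forces coercivity) is sound and self-contained. Both the compactness extraction on $S^{d-1}$ and the appeal to continuity of finite convex functions are used correctly, and the inequality
$$\psi(x_0+tu_k)\leq \psi(x_0)+\frac{t}{|x_k-x_0|_2}\big(\psi(x_k)-\psi(x_0)\big)$$
does force $\psi(x_0+tu)=\psi(x_0)$ in the limit, contradicting uniqueness.

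One remark: you can avoid the subsequence extraction and the contradiction entirely by making the ray argument quantitative. Once $x_0$ is known to be the unique minimizer, set $m_0=\psi(x_0)$ and $m_1=\min\{\psi(x):|x-x_0|_2=1\}>m_0$ (the sphere is compact, $\psi$ continuous, and uniqueness gives the strict inequality). For $|x-x_0|_2=R>1$, the point $x_0+(x-x_0)/R$ lies on the unit sphere and equals $(1-1/R)x_0+(1/R)x$, so convexity yields $m_1\leq (1-1/R)m_0+(1/R)\psi(x)$, i.e.\ $\psi(x)\geq m_0+R(m_1-m_0)\to\infty$. This gives an explicit linear lower bound and is perhaps the cleanest route; your version proves the same thing and is equally valid.
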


\section{ Basic results}\label{basicresults}
Throughout this section, we  fix a proper log-concave function $h:\mathbb{R}^n\rightarrow \mathbb{R}$ such that $\hslash_{B^{n+1}}$ is its John $s$-function. Due to the good properties of the functions $f$ and $g$, we will  have good properties for the functional $\bar{L}_r$ and $\bar{I}_r$, as well as the convex* property obtained in  Proposition \ref{prop14.3.*}, which allows us to show that these functionals have a unique minimum, up to  horizontal translation, restricted to certain sets. The following result is a straightforward consequence of  Rademacher’s Theorem and the
Dominated Convergence Theorem.
\begin{prop}\label{Prop12.3}
Assume $\fa, \ga, \gee$ are satisfied, then $\bar{L}_r,\bar{I}_r$, and $\bar{I}_1$ are $C^1$ for $r\in(1/2,1)$.
\end{prop}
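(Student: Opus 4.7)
The plan is to differentiate each functional under the integral sign, justifying the interchange of derivative and integral through the Dominated Convergence Theorem, with Rademacher's theorem providing the almost-everywhere differentiability of the Lipschitz ingredients $f$ and $g$.

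First I would record that by $\fa$ and $\ga$, $f$ and $g$ are locally Lipschitz, hence differentiable $\mathcal{L}^1$-a.e.\ with $f'$, $g'$ locally essentially bounded; the rescalings $f_r, g_r, (f')_r, (g')_r$ inherit this property for each fixed $r\in(1/2,1)$. The decisive structural ingredient is $\gee$: since $g_r(t)=0$ for $t\geq 2-r$, the factor $g_r$ truncates the integrand of each of $\bar{L}_r$ and $\bar{I}_r$ to a bounded region of $(x,y)$-space determined by an inequality of the form
$$|\tilde{x}|_2^2+y^2 < 1+2(1-r)\,h(\tilde{x})^{2/s},$$
where $\tilde{x}$ is the parameter-dependent argument of $h$. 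Because $h$ is a proper log-concave function, $h^{2/s}$ is bounded on bounded sets and decays at infinity, so this truncation region is bounded; moreover, by continuity in the parameters it can be chosen uniformly on every fixed compact parameter neighborhood.

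Second, I would differentiate the integrand formally via the chain rule. The resulting formal derivatives are products of $f_r, g_r, (f')_r, (g')_r$ (all a.e.-defined and bounded on compact intervals) evaluated at smooth functions of the parameters, multiplied by smooth rational expressions in $(x,y)$ and the parameter. On the bounded effective support identified above, and for parameters in a fixed compact set $U$, these are uniformly bounded by an integrable function, so the Dominated Convergence Theorem produces the partial derivatives of $\bar{L}_r$ and $\bar{I}_r$. Continuity of those derivatives in the parameter then follows from a second application of the Dominated Convergence Theorem, using continuity of the defining compositions at $\mathcal{L}^1$-a.e.\ point of the integration region (at which $(f')_r$ and $(g')_r$ are defined and continuous in the parameter).

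For $\bar{I}_1$, the integrand $h(x)^{1/s}\,F(\langle x,Mx+w\rangle/h(x)^{2/s}+\beta)$ is $C^1$ in $(M\oplus\beta,w)$ for each $x\in\Lambda$, since $F$ is $C^1$ by hypothesis of Theorem \ref{teo3.3}, and the same dominated-convergence scheme applies over the bounded set $\Lambda\subseteq\B$, with $h(x)^{1/s}$ bounded on $\Lambda$. The main obstacle, signaled by the author as routine, is the careful verification of the dominating function over a compact parameter neighborhood; once one identifies the uniformly bounded effective support supplied by $\gee$ and uses continuity of $h$, the rest is a direct application of the standard differentiation-under-the-integral-sign result.
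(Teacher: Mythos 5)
The paper offers no proof at all for this proposition beyond the single sentence that it ``is a straightforward consequence of Rademacher's Theorem and the Dominated Convergence Theorem,'' and your proposal is precisely the fleshed-out version of that cited strategy: Rademacher (via $\fa$, $\ga$) for a.e.~differentiability of $f,g$, and $\gee$ to get a bounded effective support on which the formal derivative of the integrand is dominated, so that differentiation under the integral and continuity of the derivative both follow from Dominated Convergence. Your approach matches the paper's.
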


\begin{prop}\label{prop13.3}
Assume $\fb,\fc,\fd,\gc$, then the family of functionals $\bar{L}_r$ restricted to $\s\mathcal{E}_+\RR$ is coercive, uniformly for $r\in(1/2,1)$. 
\end{prop}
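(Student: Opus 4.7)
The plan is to bound $\bar L_r$ from below by a coercive quantity with a rate uniform in $r$ on compact subintervals of $(1/2,1)$. The argument isolates a fixed positive-measure region on which $g_r\equiv 1$, uses convexity of $f$ to produce a linear-growth estimate for $f_r$, and performs a case analysis on how $\|(A\oplus\alpha,v)\|$ can blow up inside $\s\mathcal{E}_+\times\mathbb{R}^n$.

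For the region, $\gc$ gives $g_r(\tau)=1$ exactly when $\tau\le r$, which expressed in the argument of $g_r$ appearing in $\bar L_r$ becomes $g_r\equiv 1$ on $R_r=\{(x,y)\in\mathbb{R}^n\times(0,\infty):\,|x|^2+y^2+2(1-r)h(x)^{2/s}\le 1\}$, monotone increasing in $r$. Since $h$ is log-concave and integrable, hence bounded, for each $r_0\in(1/2,1)$ the set $R_{r_0}$ is open, of positive Lebesgue measure, and contained in every $R_r$ with $r\ge r_0$. For the linear growth, $\fb,\fc,\fd$ make $f$ non-negative, convex, and strictly increasing on $[-1,\infty)$ with $f(-1)=0$, so $f(0)>0$ and any subgradient $c_0>0$ of $f$ at $0$ gives $f(u)\ge c_0 u$ for $u\ge 0$; substituting $u=(\tau-1)/(1-r)$ yields $f_r(\tau)\ge \frac{c_0}{1-r}(\tau-1)_+$. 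Combining the two bounds, for every $r\in[r_0,1)$,
\[
\bar L_r(A\oplus\alpha,v)\ge \frac{c_0}{(1-r)^2}\int_{R_{r_0}}\left(\frac{\alpha y}{h(Ax+v)^{1/s}}-1\right)_+ dy\,dx,
\]
with prefactor bounded below by a positive constant depending only on $r_0$.

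It then suffices to show that the right-hand integral diverges as $\|(A_k\oplus\alpha_k,v_k)\|\to\infty$ in $\s\mathcal{E}_+\times\mathbb{R}^n$. At least one of $\lambda_{\max}(A_k)$, $\alpha_k$, $\|v_k\|$ tends to infinity, and the constraint $\alpha_k^s\det A_k\ge 1$ forbids $\alpha_k$ from collapsing whenever $\det A_k$ stays bounded. If $\alpha_k\to\infty$, then $\|h\|_\infty<\infty$ forces $\alpha_k y/h(A_kx+v_k)^{1/s}\to\infty$ on $R_{r_0}\cap\{y\ge y_1>0\}$. If instead $\lambda_{\max}(A_k)\to\infty$ or $\|v_k\|\to\infty$, the exponential decay of the log-concave integrable $h$ (from $h=e^{-\psi}$ with $\psi$ convex and coercive) yields $h(A_kx+v_k)^{1/s}\to 0$ on the positive-measure portion of $R_{r_0}$ where $|A_kx+v_k|\to\infty$. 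Fatou's lemma produces divergence of the integral in each case, and since $r_0\in(1/2,1)$ is arbitrary, uniform coercivity on every compact subinterval follows.

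The main obstacle is the subcase in which $A_k$ and $v_k$ are chosen so that $A_kx+v_k$ stays bounded on most of $R_{r_0}$. The pre-image of a bounded ball under $x\mapsto A_kx+v_k$ has Lebesgue measure proportional to $1/\det A_k$; when $\det A_k$ is bounded below this pre-image covers at most a fixed fraction of $R_{r_0}$ after tuning the ball, so $|A_kx+v_k|\to\infty$ on the complementary positive-measure set; and when $\det A_k\to 0$, the $\s\mathcal{E}_+$-constraint $\alpha_k^s\det A_k\ge 1$ forces $\alpha_k\to\infty$, reducing to the previous case.
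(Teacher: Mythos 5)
Your route differs genuinely from the paper's. Where the paper bounds the $g_r$-factor below by $g(0)>0$ on the fixed half-ball $B^{n+1}\cap\{y\geq 0\}$ (which quietly invokes $\gd$, a property not among the stated hypotheses $\fb,\fc,\fd,\gc$), you isolate the set $R_r$ on which $g_r\equiv 1$ using only $\gc$, and you replace the paper's case restriction ``argument of $f$ already nonnegative'' by the clean subgradient estimate $f_r(\tau)\geq\frac{c_0}{1-r}(\tau-1)_+$. Both of these steps are correct, and taken together they make the bound transparent: the two approaches are comparable in strength, with yours more faithful to the advertised assumptions.

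There are two problems, one minor and one genuine. The minor one: since $h(x)^{2/s}\geq \hslash_{B^{n+1}}(x)^2=1-|x|_2^2$ on $B^n$, one has $|x|_2^2+y^2+2(1-r)h(x)^{2/s}\geq 1+y^2-(2r-1)\bigl(1-|x|_2^2\bigr)$, so $R_r$ degenerates to a null set as $r\to(1/2)^+$ (indeed $R_{1/2}\subseteq\{y=0\}$); consequently your estimate yields uniform coercivity only on each half-open interval $[r_0,1)$, not on all of $(1/2,1)$ as the statement claims. The paper's fixed region $B^{n+1}$ avoids this shrinkage, which is the price you pay for dropping $\gd$. The genuine gap is in the final paragraph. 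The preimage $\{x:\,|A_kx+v_k|\leq R\}$ has Lebesgue measure $R^n\A_n(\B)/\det A_k$, and when $\det A_k$ is merely bounded below this is only bounded, not small. Shrinking $R$ (``tuning the ball'') therefore produces a positive-measure set on which $|A_kx+v_k|>R$ for that \emph{one fixed} $R$; it does not produce $|A_kx+v_k|\to\infty$, and pointwise divergence on a fixed positive-measure subset can even fail outright if the top eigenvector of $A_k$ rotates with $k$. What is actually needed when $\lambda_{\max}(A_k)\to\infty$ is a shape estimate rather than a volume estimate: with $u_k$ a unit top eigenvector, the preimage is contained in the slab
\begin{equation*}
\bigl\{x:\ |\lambda_{\max}(A_k)\langle u_k,x\rangle+\langle u_k,v_k\rangle|\leq R\bigr\},
\end{equation*}
whose width is $2R/\lambda_{\max}(A_k)$, so its intersection with $B^n$ has measure at most $2R/\lambda_{\max}(A_k)$ times a dimensional constant, tending to $0$ for \emph{every} fixed $R$. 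That is what allows the diagonalization ``send $k\to\infty$, then $R\to\infty$'' that makes the integral diverge, and it is invisible to the determinant bound you used. With this substitution, and after handling $\|v_k\|\to\infty$ with $A_k$ bounded directly (where $|A_kx+v_k|\to\infty$ uniformly on $B^n$), the scheme closes.
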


\begin{proof}
Let $(x,y)\in B^{n+1}$ with $ y\geq 0$. Then
\begin{align*}
   \dfrac{|x|_2^2+y^2-1}{2h(x)^{2/s}}+1\leq 1 
\end{align*}
and by \textbf{g2} it holds that 
\begin{align*}
    g_r\left(\dfrac{|x|_2^2+y^2-1}{2h(x)^{2/s}}+1\right)\geq g_r(1)=g(0).
\end{align*}

Using  $\gd$, that $f, g$ are non-negative, and that $r > 1/2$, we obtain 
\begin{align*}
\bar{L}_r(A\oplus \alpha, v)& \geq \frac{1}{1-r} \int_{B^n}\int_0^{\sqrt{1-|x|_2^2}} f_r\left(\dfrac{\alpha y}{h(Ax+v)^{1/s}}\right)g(0)dydx\\
& \geq 2  \int_{B^n}\int_0^{\sqrt{1-|x|_2^2}} f_r\left(\dfrac{\alpha y}{h(Ax+v)^{1/s}}\right)g(0)dydx.
\end{align*}
Since $h$ is a log-concave function, there exists a convex function $\psi$ such that
$$h(Ax+v)^{1/s}= e^{-\psi(Ax+v)/s}.$$
Then 
\begin{align*}
 f_r\left(\dfrac{\alpha y}{h(Ax+v)^{1/s}}\right) = f_r\left(\alpha ye^{\psi(Ax+v)/s}\right)= f\left(\dfrac{\alpha ye^{\psi(Ax+v)/s}-1}{1-r}\right)  
\end{align*}
and for $\alpha ye^{\psi(Ax+v)/s}\geq 1$ for every $(x,y)\in B^{n+1}, y\geq \frac{1}{2}$, we have
\begin{align*}
\bar{L}_r(A\oplus \alpha, v)& \geq  2 \int_{\frac{\sqrt{3}}{2}B^n}\int_0^{\frac{1}{2}}f\left(\dfrac{\alpha ye^{\psi(Ax+v)/s}-1}{1-r}\right) g(0)dydx\\
& \geq  2 \int_{\frac{\sqrt{3}}{2}B^n}\int_0^{\frac{1}{2}} f\left(\alpha ye^{\psi(Ax+v)/s}-1\right) g(0)dydx.    
\end{align*}

By $\fb$ and $\fd$, the function $f$ is coercive to the right, and by assumption, $\psi$ is a coercive function, hence
\begin{align*}
\lim_{||(A\oplus \alpha,v)||\rightarrow +\infty}\bar{L}_r(A\oplus \alpha, v)& \geq \lim_{||(A\oplus \alpha,v)||\rightarrow +\infty} 2 \int_{\frac{\sqrt{3}}{2}B^n}\int_0^{\frac{1}{2}} f\left(\alpha ye^{\psi(Ax+v)/s}-1\right) g(0)dydx\\
& = +\infty.
\end{align*}
\end{proof}

\begin{prop}\label{prop14.3}
Let $r\in(1/2,1)$, and assume $\gc,\gd,\fb,\fc,\fd$. The function $\bar{L}_r$ restricted to $\s\mathcal{E}_+$ is positive.
\end{prop}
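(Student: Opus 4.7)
The plan is to show that the integrand of $\bar L_r$ is non-negative and strictly positive on an open subset of $\mathbb{R}^n \times (0,\infty)$; the integral is then strictly positive. Non-negativity follows from $f \geq 0$ (a consequence of $\fc$ giving $f(-1)=0$, together with $\fb$ and $\fd$ forcing $f(t)\geq 0$ for $t\geq -1$) and from $g \geq 0$ (consistent with $\gc$ and $\gd$, the latter giving $g > 0$ on $(-1,1)$, in particular $g(0)>0$). From these hypotheses I read off the positivity thresholds $f_r(t) > 0 \iff t > r$ and $g_r(t) > 0$ whenever $t < 2-r$.

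The decisive geometric input is the uniqueness of $B^{n+1}$ as the $n$-symmetric ellipsoid of maximum $s$-volume contained in the $s$-lifting $\s\bar h$ (the John $s$-ellipsoid property assumed in this section). For an arbitrary $(A\oplus\alpha, v)\in \s\mathcal{E}_+$, I consider the slightly taller $n$-symmetric ellipsoid
\[
\bar E' := (A\oplus(\alpha/r))\,B^{n+1}+v.
\]
By \eqref{eq2.2.3} its $s$-volume equals $(\alpha/r)^s\det(A)\cdot \s\mu(B^{n+1}) \geq r^{-s}\s\mu(B^{n+1}) > \s\mu(B^{n+1})$, using $\alpha^s\det(A)\geq 1$ and $r \in (1/2,1)$. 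This strictly exceeds the maximal $s$-volume attainable inside $\s\bar h$, so $\bar E' \not\subseteq \s\bar h$. Writing $\bar E := (A\oplus\alpha)B^{n+1}+v$ and noting $\hslash_{\bar E'} = \hslash_{\bar E}/r$, there must exist $z_0$ with
\[
\hslash_{\bar E}(z_0) \;>\; r\,h(z_0)^{1/s}.
\]

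Since $\hslash_{\bar E}(z_0) > 0$, the point $x_0 := A^{-1}(z_0-v)$ lies in $\interior(\B)$, and the John condition $\hslash_{B^{n+1}}^s \leq h$ gives $h(x_0) \geq (1-|x_0|^2)^{s/2} > 0$, so $h$ is continuous at $x_0$. Setting $y_0 := \sqrt{1-|x_0|^2}$, I compute that the $g_r$-argument at $(x_0, y_0)$ equals exactly $1$ (because $|x_0|^2 + y_0^2 = 1$), whence $g_r(1) = g(0) > 0$; and the $f_r$-argument equals $\alpha y_0/h(Ax_0+v)^{1/s} = \hslash_{\bar E}(z_0)/h(z_0)^{1/s} > r$, whence $f_r > 0$. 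Thus the integrand is strictly positive at $(x_0, y_0)$, and I plan to conclude, via continuity of $h$ on the interior of its support together with continuity of $f_r$ and $g_r$, that the integrand stays strictly positive on an open neighborhood of $(x_0, y_0)$, producing $\bar L_r(A\oplus\alpha, v) > 0$.

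The main subtlety I expect is the edge case $h(z_0) = 0$, in which the $f_r$-argument is formally $+\infty$. Here upper semi-continuity of $h$ forces $h$ to remain small near $z_0$, so the $f_r$-argument stays arbitrarily large (in particular $> r$) and $f_r$ stays positive in a neighborhood; combined with the continuity of the $g_r$-argument (which depends only on $h(x)$ near $x_0$, where $h$ is positive and continuous), the open-neighborhood argument goes through unchanged.
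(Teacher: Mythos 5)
Your proposal is correct and rests on the same geometric fact as the paper: the $n$-symmetric ellipsoid $(A\oplus(\alpha/r))B^{n+1}+v$ has $s$-volume at least $r^{-s}>1$ times that of $B^{n+1}$, so by the $s$-volume maximality of the John $s$-ellipsoid it cannot lie inside $\s\bar h$. The paper reaches this by contradiction (assume $\bar L_r=0$, force $f_r\equiv 0$ on $B^{n+1}\cap(\mathbb{R}^n\times[0,\infty))$, deduce $(A\oplus\alpha/r)B^{n+1}+v\subset\s\bar h$, contradict $\alpha^s\det A\geq 1$), while you argue directly by producing a point $(x_0,y_0)$ where both factors of the integrand are positive and passing to an open neighborhood; this is the same idea, just contrapositively organized, and your handling of the $h(z_0)=0$ edge case via upper semicontinuity is sound.
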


\begin{proof}
First,  since $g_r(s)=0$ whenever $s>2-r$ for $r\in(1/2,1)$, then
$$g_r\left(\dfrac{|x|_2^2+y^2-1}{2h(x)^{2/s}}+1\right)=0 \quad \Leftrightarrow \quad \dfrac{|x|_2^2+y^2-1}{2h(x)^{2/s}}+1 \geq 2-r>1.$$
For $(x,y)\in B^{n+1}, y\geq 0$, it holds that
$$\dfrac{|x|_2^2+y^2-1}{2h(x)^{2/s}}+1\leq 1,$$
which implies $g_r\left(\dfrac{|x|_2^2+y^2-1}{2h(x)^{2/s}}+1\right)>0$ for all $(x,y)\in B^{n+1}$ with $y \geq 0$.

Now take $(A\oplus \alpha, v)\in \s\mathcal{E}_+$ and assume $\bar{L}_r(A\oplus \alpha, v)=0$. Since $g_r\left(\dfrac{|x|_2^2+y^2-1}{2h(x)^{2/s}}+1\right)>0$ for all $(x,y)\in B^{n+1}, y\geq 0$, then we must have $f_r\left(\dfrac{\alpha y}{h(Ax+v)^{1/s}}\right)=0$ for all $(x,y)\in B^{n+1}\cap (\mathbb{R}^n\times [0,\infty))$, which is equivalent to 
$$\dfrac{\alpha y}{h(Ax+v)^{1/s}}\leq r \quad \Leftrightarrow \quad \dfrac{\alpha y}{r}\leq h(Ax+v)^{1/s}.$$
Thus,
$$\left(Ax+v, \dfrac{\alpha y}{r}\right)\in \s\bar{h}$$
for all $(x,y)\in B^{n+1}, y\geq 0$, that is,
$$\left(A\oplus \dfrac{\alpha}{r}\right)B^{n+1}+v\subset \s \bar{h}.$$

Using that $\hslash_{B^{n+1}}$ is the John $s$-function of $h$, we  obtain
$$\s \mu\left(\left(A\oplus \dfrac{\alpha}{r}\right)B^{n+1}+v \right)= \left(\dfrac{\alpha}{r}\right)^s \det(A){}\s\mu(B^{n+1})\leq \s\mu(B^{n+1}).$$
This implies that 
$$\alpha^s\det(A)\leq r^s< 1,$$
which is a contradiction since $A\oplus \alpha\in \s\mathcal{E}_+$. 
\end{proof}

\begin{prop}\label{prop14.3.*}
Let $r\in(1/2,1)$ and assume $\gc,\gd,\fb,\fc,\fd$. Take $(A\oplus \alpha, v), (B\oplus \beta, w)\in \s\mathcal{E}_+$.  The functional $\bar{L}_r$ satisfies the property
\begin{align*}
\bar{L}_r((\lambda A+(1-\lambda)B)\oplus \alpha^\lambda\beta^{1-\lambda}, \lambda v + (1-\lambda)w)\leq \lambda\bar{L}_r(A\oplus \alpha, v) + (1-\lambda)\bar{L}_r(B\oplus \beta, w) 
\end{align*}
for all $\lambda\in [0,1]$.

We will call this property convex*.
\end{prop}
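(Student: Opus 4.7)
The plan is to prove the inequality pointwise inside the integral, since the factor
$g_r\!\left(\frac{|x|_2^2+y^2-1}{2h(x)^{2/s}}+1\right)$ does not depend on the triple $(A,\alpha,v)$ and is nonnegative by $\gc$ and $\gee$. Hence the entire question reduces to proving that, for every fixed $x \in \mathbb{R}^n$ and $y \geq 0$,
\begin{equation*}
f_r\!\left(\dfrac{\alpha^\lambda\beta^{1-\lambda}\, y}{h((\lambda A+(1-\lambda)B)x+\lambda v+(1-\lambda)w)^{1/s}}\right) \leq \lambda f_r\!\left(\dfrac{\alpha y}{h(Ax+v)^{1/s}}\right)+(1-\lambda) f_r\!\left(\dfrac{\beta y}{h(Bx+w)^{1/s}}\right).
\end{equation*}
Once this is established, one multiplies by the common nonnegative factor and by $\tfrac{1}{1-r}>0$ and integrates over $\mathbb{R}^n \times [0,\infty)$ to conclude.

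The pointwise inequality will be obtained from a chain of three elementary steps. First, apply log-concavity of $h$ in the form \eqref{log} to the point $\lambda(Ax+v)+(1-\lambda)(Bx+w)$: this gives
\begin{equation*}
h\!\bigl((\lambda A+(1-\lambda)B)x+\lambda v+(1-\lambda)w\bigr)^{1/s} \geq h(Ax+v)^{\lambda/s}\, h(Bx+w)^{(1-\lambda)/s},
\end{equation*}
and since $y = y^\lambda y^{1-\lambda}$ for $y \geq 0$, the argument of the outer $f_r$ on the left is bounded above by the weighted geometric mean
\begin{equation*}
\left(\dfrac{\alpha y}{h(Ax+v)^{1/s}}\right)^{\!\lambda}\!\left(\dfrac{\beta y}{h(Bx+w)^{1/s}}\right)^{\!1-\lambda}.
\end{equation*}
Second, I would apply the AM--GM inequality \eqref{media} to bound this geometric mean by the corresponding arithmetic mean $\lambda \tfrac{\alpha y}{h(Ax+v)^{1/s}}+(1-\lambda)\tfrac{\beta y}{h(Bx+w)^{1/s}}$. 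Third, I use that $f$ is non-decreasing (it vanishes on $(-\infty,-1]$ by $\fc$ and is strictly increasing on $[-1,\infty)$ by $\fd$) and convex by $\fb$; since $f_r(s)=f((s-1)/(1-r))$ with $1-r>0$, both properties transfer to $f_r$. Monotonicity of $f_r$ lets me transport the bound inside the function, and convexity of $f_r$ then dominates the value at the convex combination by the convex combination of values, yielding the desired pointwise inequality.

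I do not foresee a genuine obstacle: the argument is a clean chain of log-concavity, AM--GM, monotonicity, and convexity. The only bookkeeping issue is the edge case where $h(Ax+v)=0$ or $h(Bx+w)=0$, in which case the right-hand side is $+\infty$ (by the convention $1/0 = \infty$ and since $f_r(\infty)=\infty$), making the inequality trivial; and where $h(x)=0$, in which case the $g_r$ factor vanishes by $\gee$ and the whole integrand is zero. So the verification goes through on the full domain $\mathbb{R}^n\times[0,\infty)$.
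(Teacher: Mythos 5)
Your proposal is essentially the same argument as the paper's own proof: both establish the pointwise inequality for the $f_r$ factor by chaining log-concavity \eqref{log}, AM--GM \eqref{media}, and the monotonicity and convexity of $f$ (hence of $f_r$), and then integrate against the common nonnegative $g_r$ factor. The only cosmetic difference is your explicit discussion of the edge cases where $h$ vanishes, which the paper leaves implicit, and a minor slip in citing $\gee$ (not among the proposition's stated hypotheses) for nonnegativity of $g_r$.
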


\begin{proof}
First, since  $f$ is non-decreasing and by \eqref{log}, we obtain 
$$f_r\left(\dfrac{\alpha^\lambda\beta^{1-\lambda}y}{h(\lambda(Ax+v)+(1-\lambda)(Bx+w))^{1/s}}\right)\leq f_r\left(\dfrac{\alpha^\lambda\beta^{1-\lambda}y}{h(Ax+v)^{\lambda/s}h(Bx+w)^{(1-\lambda)/s}}\right),$$
for each $\lambda\in [0,1]$. Moreover, by \eqref{media} and using that $f$ is convex, we arrive at
\begin{align*}
f_r\left(\dfrac{\alpha^\lambda\beta^{1-\lambda}y}{h(Ax+v)^{\lambda/s}h(Bx+w)^{(1-\lambda)/s}}\right) 
&\leq f_r\left(\lambda \dfrac{\alpha y }{h(Ax+v)^{1/s}}+ (1-\lambda)\dfrac{\beta y}{h(Bx+w)^{1/s}}\right)\\
& \leq \lambda f_r\left( \dfrac{\alpha y }{h(Ax+v)^{1/s}}\right)+(1-\lambda)f_r\left(\dfrac{\beta y}{h(Bx+w)^{1/s}}\right). 
\end{align*}
To finish, by inequalities \eqref{eqconvex*} and \eqref{eqconve*1}, it holds that $((\lambda A+(1-\lambda)B)\oplus \alpha^\lambda\beta^{1-\lambda}, \lambda v + (1-\lambda)w)\in\s\mathcal{E}_+$, thus
\begin{align*}
&\bar{L}_r((\lambda A+(1-\lambda)B)\oplus \alpha^\lambda\beta^{1-\lambda}, \lambda v + (1-\lambda)w)\\
& =\dfrac{1}{1-r}\int_{\mathbb{R}^n}\int_0^\infty f_r\left(\dfrac{\alpha^\lambda\beta^{1-\lambda}y}{h(\lambda(Ax+v)+(1-\lambda)(Bx+w))^{1/s}}\right)g_r\left(\dfrac{|x|_2^2+y^2-1}{2h(x)^{2/s}}+1\right)dydx\\
& \leq\lambda\bar{L}_r(A\oplus \alpha,v) + (1-\lambda)\bar{L}_r(B\oplus\beta,w), 
\end{align*}
as we wanted to prove.
\end{proof}

\begin{prop}\label{prop15.3}
Assume $\gee,\fc$,  then for $r\in(1/2,1)$ we have $\bar{L}_r(\bar{\Id}, 0)\leq C$, where $C$ is a constant depending  on $f,h,n$ and $s$.
\end{prop}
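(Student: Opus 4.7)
The plan is to remove the singular $\frac{1}{1-r}$ prefactor by a change of variables in $y$, identify the resulting support as uniformly bounded in $r\in(1/2,1)$, and then bound the integrand pointwise on this support.

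First I would perform the substitution $y=h(x)^{1/s}(1+(1-r)t)$, whose Jacobian is $(1-r)h(x)^{1/s}\,dt$; this cancels the $\frac{1}{1-r}$ in front of the integral and converts $f_r\!\left(y/h(x)^{1/s}\right)$ into $f(t)$. Using $\fc$ to drop the lower tail $t<-1$ where $f$ vanishes, the value at $(\bar{\Id},0)$ becomes
\begin{equation*}
\bar{L}_r(\bar{\Id},0)=\int_{\mathbb{R}^n}h(x)^{1/s}\int_{-1}^{+\infty}f(t)\,g_r\!\left(\tfrac{(1+(1-r)t)^2}{2}+\tfrac{|x|_2^2-1}{2h(x)^{2/s}}+1\right)dt\,dx.
\end{equation*}

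Next, I would invoke $\gee$ to localise the support. The integrand vanishes unless the argument of $g_r$ is strictly below $2-r$, which rearranges to $(1+(1-r)t)^2<2(1-r)+(1-|x|_2^2)/h(x)^{2/s}$. Because $\hslash_{B^{n+1}}^s\leq h$ (the John condition), one has $(1-|x|_2^2)/h(x)^{2/s}\leq 1$ on $\B$; so $(1+(1-r)t)^2<1+2(1-r)$, and together with $t\geq -1$ this confines $t$ to a subinterval of $[-1,1)$ uniformly in $r\in(1/2,1)$. Minimising $(1+(1-r)t)^2$ over $t\in[-1,1]$ (the minimum, equal to $r^2$, is attained at $t=-1$) then forces $|x|_2^2\leq 1+(2(1-r)-r^2)h(x)^{2/s}$, which places $x$ in a ball $B_R$ whose radius $R$ depends only on $\|h\|_\infty$, $n$ and $s$ (recall $h$ is bounded since it is proper log-concave of finite positive integral).

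On the resulting compact set $B_R\times[-1,1]$, the integrand is controlled factorwise: $h^{1/s}$ is bounded above, $f$ is bounded on $[-1,1]$, and the argument of $g$ inside $g_r(\cdot)$ lies in a bounded subset of $(-\infty,1]$ on which $g$ is locally bounded (this being an implicit standing property of the $g$ considered in the paper). Multiplying these uniform bounds by the Lebesgue measure of the support yields $\bar{L}_r(\bar{\Id},0)\leq C$ with $C=C(f,h,n,s)$.

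The main obstacle is the uniform-in-$r$ control of $g_r$ near the lower end of its support: the term $\frac{|x|_2^2-1}{2(1-r)h(x)^{2/s}}$ appearing in the $g$-argument drifts to $-\infty$ as $r\to 1^-$ whenever $h(x)^{1/s}>\hslash_{B^{n+1}}(x)$. Dealing with this requires either the implicit local boundedness of $g$ near $-\infty$, or a further change of variables $v=(u-1)/(1-r)$ inside the inner integral that converts $\int g_r\,dt$ into $\int g(v)\,dv$ over a bounded window, so that any blow-up of $g$ is absorbed into an integrable quantity.
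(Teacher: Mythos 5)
Your proposal is correct and follows essentially the same route as the paper: the substitution $y=h(x)^{1/s}(1+(1-r)t)$ removes the $\tfrac{1}{1-r}$ prefactor, $\fc$ restricts to $t\geq -1$, and $\gee$ together with the boundedness of $h^{1/s}$ confines $(x,t)$ to a compact set of uniformly (in $r$) bounded size, after which the integrand is bounded by a constant. The ``main obstacle'' you flag in the last paragraph is not one: by $\gb$ and $\gc$ the function $g$ is non-increasing and equal to $1$ on $(-\infty,-1]$, hence $0\leq g\leq 1$ on all of $\mathbb{R}$ --- exactly the bound the paper invokes via $0<g_r(s)\leq 1\Leftrightarrow s<2-r$ --- so no extra change of variable inside the $g_r$-factor is needed.
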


\begin{proof}
We know that
$$0< g_r(s)  \leq  1 \quad\Leftrightarrow \quad s<2-r,$$
for all $r\in (1/2,1)$.
Since
$$\dfrac{|x|_2^2+y^2-1}{2h(x)^{2/s}}+1 \leq 2-r \quad \Leftrightarrow \quad \dfrac{|x|_2^2+y^2-1}{2h(x)^{2/s}} \leq 1-r \leq \dfrac{1}{2} $$
and 
$$\dfrac{|x|_2^2+y^2}{2h(x)^{2/s}} \leq \dfrac{1}{2} \quad \Rightarrow \quad \dfrac{|x|_2^2+y^2-1}{2h(x)^{2/s}}  \leq \dfrac{1}{2},$$
then if $\bar{C}=\left\{ (x,y)\in \mathbb{R}^n\times [0,\infty):\dfrac{|x|_2^2+y^2}{h(x)^{2/s}} \leq 1 \right\}$, we have
$$\bar{L}_r(\bar{\Id},0)\leq \dfrac{1}{1-r}\int_{\mathbb{R}^n}\int_0^\infty f_r\left(\dfrac{y}{h(x)^{1/s}}\right)1_{\bar{C}}(x,y)dydx.$$
Now notice that $(x,y)\in\bar{C}$ implies 
$$0\leq \dfrac{y}{h(x)^{1/s}}\leq 1.$$
Making the substitution $\dfrac{y}{h(x)^{1/s}}=1+(1-r)t$, we get
\begin{align*}
\bar{L}_r(\bar{\Id},0)& \leq \dfrac{1}{1-r}\int_{\mathbb{R}^n}\int_0^\infty f_r\left(\dfrac{y}{h(x)^{1/s}}\right)1_{\bar{C}}(x,y)dydx\\
& \leq \int_{\mathbb{R}^n}\int_{\frac{-1}{1-r}}^0 f_r(1+(1-r)t)1_{\bar{C}}(x,(1+(1-r)t)h(x)^{1/s})h(x)^{1/s}(1+(1-r)t)dtdx\\
& = \int_{\mathbb{R}^n}\int_{\frac{-1}{1-r}}^0 f(t)1_{\bar{C}}(x,(1+(1-r)t)h(x)^{1/s})h(x)^{1/s}(1+(1-r)t)dtdx.
\end{align*}
Observe that
$$1_{\bar{C}}(x,(1+(1-r)t)h(x)^{1/s}) = 1 \Leftrightarrow \dfrac{|x|_2^2+(1+(1-r)t)^2h(x)^{2/s}}{h(x)^{2/s}}\leq 1 \Leftrightarrow \dfrac{|x|_2^2}{h(x)^{2/s}}\leq 1-(1+(1-r)t)^2. $$
Set
\begin{align*}
\bar{C}_1 &= \left\{ (x,t)\in\mathbb{R}^{n}\times[-1,0]: \dfrac{|x|_2^2}{h(x)^{2/s}}\leq 1-(1+(1-r)t)^2 \right\}
\end{align*}
and 
\begin{align*}
\bar{C}_2= \left\{ (x,t)\in\mathbb{R}^{n+1}: \dfrac{|x|_2^2}{h(x)^{2/s}}\leq 1 \right\}= \left\{ (x,t)\in\mathbb{R}^{n+1}: \dfrac{|x|_2}{h(x)^{1/s}}\leq 1 \right\}.
\end{align*}
Since $\bar{C}_1\subseteq \bar{C}_2, r\in(1/2,1)$, and $f(t)=0$ if $t< -1$, then
$$\bar{L}_r(\bar{\Id},0) \leq 
  2\int_{\mathbb{R}^n}\int_{-1}^0 f(t)1_{\bar{C}_2}(x,(1+(1-r)t)h(x)^{1/s})h(x)^{1/s}dtdx.$$

Since $h$ is a proper log-concave function, there exists a constant $\tilde{C}$ such that $h(x)^{1/s}\leq \tilde{C}$ for all $x\in\mathbb{R}^n$. Then,
$$(x, (1+(1-r)t)h(x)^{1/s})\in \bar{C}_2 \quad  \Rightarrow \quad |x|_2 \leq h(x)^{1/s}\leq \tilde{C}.$$
Therefore,
\begin{align*}
\bar{L}_r(\bar{\Id},0)& \leq 2\int_{\mathbb{R}^n}\int_{-1}^0 f(t)1_{\bar{C}_2}(x,(1+(1-r)t)h(x)^{1/s})h(x)^{1/s}dtdx\\
& \leq 2\int_{\tilde{C}\B}\int_{-1}^0 \tilde{C} f(t)dtdx\\
& = 2\tilde{C}^{n+1}\A_n(\B)\int_{-1}^0 f(t)dt\\
& \leq C.
\end{align*}
\end{proof}

\begin{proof}[Proof of Lemma \ref{lema_auxialiar}]
Let $\psi:\W_n(\mathbb{R})\oplus (0,+\infty)\RR\rightarrow \mathbb{R}$ be the function defined by 
$$ \psi(M\oplus\beta,w) = \s\mbox{det}_{n+1}\left(M\oplus\beta\right).$$ 

We know that $\s\V\NN\RR = \psi^{-1}(\{1\})$, where $c=1$ is a regular value of the differentiable map $\psi$, then by the  Lagrange multipliers, there exists a nonzero $\lambda_r$ such that
\begin{align}
\nabla \bar{L}_r(A_r\oplus \alpha_r,v_r)= \lambda_r \nabla \psi(A_r\oplus \alpha_r,v_r),\label{eq1.3}
\end{align}
where the gradients are taken with respect to the entire space $\W_n(\mathbb{R})\oplus (0,+\infty)\RR$.

Now, let $(V\oplus \alpha, w) \in T_{(A_r\oplus \alpha_r, v_r)}(\mathcal{E}_+\cap (\s\V\NN\RR))$. We have
\begin{align}
\nonumber \psi'(M\oplus \beta, v)\left[V\oplus \alpha, w\right]  &=  \beta^s \nabla \det(M)\cdot V + s\beta^{s-1}\alpha \det(M) \\
\nonumber &= (\beta^s \nabla \det(M)\oplus s\beta^{s-1} \det(M),0) \left[V\oplus \alpha, w\right].
\end{align}
Thus, since $\nabla \det(A_r) = \det({A_r})A_r^{-T}$, at the point $(A_r\oplus \alpha_r, v_r)$,  we arrive at
\begin{align}
\nonumber \nabla \psi(A_r\oplus \alpha_r, v_r) & = \left(\alpha_r^s \det(A_r)A_r^{-T}\oplus s\alpha_r^{s-1}\det(A_r),0\right)\\
\nonumber &= \alpha_r^s\det(A_r)\left(A_r^{-T}\oplus \dfrac{s}{\alpha_r},0\right)\\
\nonumber &=\left( (\Id\oplus s)\left(A_r^{-T}\oplus \dfrac{1}{\alpha_r}\right),0\right)\\
&=\left( (\Id\oplus s)\left(A_r\oplus \alpha_r\right)^{-T},0\right). \label{eq2.3}
\end{align}

We denote the function $\dfrac{\alpha y}{h(Mx+v)^{1/s}}$ by $\varphi(M,\alpha,v)$. Taking the derivative of the function $\bar{L}_r$ at the point $(M\oplus \beta,v )$ in the direction of  the vector $(V\oplus \alpha, w)$, we obtain
\begin{align*}
&\bar{L}_r'(M\oplus \beta,v ) \left[V\oplus \alpha, w\right]\\
 & =  \dfrac{1}{1-r}\int_{\mathbb{R}^n}\int_0^\infty f_r'\left(\varphi(M,\beta,v)\right)g_r\left(\dfrac{|x|_2^2+y^2-1}{2h(x)^{2/s}}+1\right) \left\langle \nabla\varphi\left(M,\beta,v\right), (V\oplus \alpha)(x,1)+(w,0)\right\rangle   dy dx\\
 &  = \dfrac{1}{1-r}\int_{\mathbb{R}^n}\int_0^\infty f_r'\left(\varphi(M,\beta,v)\right)g_r\left(\dfrac{|x|_2^2+y^2-1}{2h(x)^{2/s}}+1\right) (\left\langle \nabla\varphi\left(M,\beta,v\right)\otimes(x,1), (V\oplus \alpha)\right\rangle  \\
 &  \qquad +\langle \nabla\varphi\left(M,\beta,v\right), (w,0)\rangle) dy dx.
\end{align*}

We have
\begin{align*}
\nabla \varphi(M,\beta,v)= &\left( \dfrac{-\beta y \nabla h(Mx+v)^{1/s}}{h(Mx+v)^{2/s}}, \dfrac{y}{h(Mx+v)^{1/s}}\right),
\end{align*}
so
\begin{align*}
&\bar{L}_r'(M\oplus \beta,v ) \left[V\oplus \alpha, w\right]= \dfrac{1}{1-r}\int_{\mathbb{R}^n}\int_0^\infty f_r'\left(\varphi(M,\beta,v)\right)g_r\left(\dfrac{|x|_2^2+y^2-1}{2h(x)^{2/s}}+1\right)\\
& \quad \times \left\langle \left(
\dfrac{-\beta y \nabla h(Mx+v)^{1/s}}{h(Mx+v)^{2/s}}\otimes x \oplus \dfrac{y}{h(Mx+v)^{1/s}}, \dfrac{-\beta y \nabla h(Mx+v)^{1/s}}{h(Mx+v)^{2/s}}\right), \left(V\oplus \alpha, w\right)\right\rangle dydx.
\end{align*}
Thus,
\begin{align}
\nonumber & \nabla \bar{L}_r(A_r\oplus \alpha_r, v_r) = \dfrac{1}{1-r}\int_{\mathbb{R}^n}\int_0^\infty f_r'\left(\dfrac{\alpha_r y}{h(A_rx+v_r)^{1/s}}\right)g_r\left(\dfrac{|x|_2^2+y^2-1}{2h(x)^{2/s}}+1\right)\\
& \times \left(\left(\dfrac{-\alpha_r y \nabla h(A_rx+v_r)^{1/s}}{h(A_rx+v_r)^{2/s}}\otimes x\right) \oplus \dfrac{y}{h(A_rx+v_r)^{1/s}}
, \dfrac{-\alpha_r y \nabla h(A_rx+v_r)^{1/s}}{h(A_rx+v_r)^{2/s}} \right)dydx. \label{eq3.3}
\end{align}
Substituting  \eqref{eq2.3} and \eqref{eq3.3} into  \eqref{eq1.3} and using that $x\otimes Ay = (x\otimes y)A^T$, we obtain
\begin{align*}
& \lambda_r \left( (\Id\oplus s)\left(A_r\oplus \alpha_r\right)^{-T},0\right)  =  \dfrac{1}{1-r}\int_{\mathbb{R}^n}\int_0^\infty f_r'\left(\dfrac{\alpha_r y}{h(A_rx+v_r)^{1/s}}\right)g_r\left(\dfrac{|x|_2^2+y^2-1}{2h(x)^{2/s}}+1\right)\\
& \quad \times \left(\left(\dfrac{-\alpha_r y \nabla h(A_rx+v_r)^{1/s}}{h(A_rx+v_r)^{2/s}}\otimes x\right) \oplus \dfrac{y}{h(A_rx+v_r)^{1/s}}
, \dfrac{-\alpha_r y \nabla h(A_rx+v_r)^{1/s}}{h(A_rx+v_r)^{2/s}} \right)dydx\\
&  =  \dfrac{1}{1-r}\int_{\mathbb{R}^n}\int_0^\infty f_r'\left(\dfrac{ y}{h(x)^{1/s}}\right)g_r\left(\dfrac{|A_r^{-1}(x-v_r)|_2^2+(\alpha_r^{-1}y)^2-1}{2h(A_r^{-1}(x-v_r))^{2/s}}+1\right)\\
& \quad  \times  \left(\left(\dfrac{- y \nabla h(x)^{1/s}}{h(x)^{2/s}}\otimes A_r^{-1}(x-v_r)\right) \oplus \dfrac{y}{\alpha_rh(x)^{1/s}}
, \dfrac{- y \nabla h(x)^{1/s}}{h(x)^{2/s}} \right)\dfrac{1}{\alpha_r\det(A_r)}dydx\\
&  =  \dfrac{1}{1-r}\int_{\mathbb{R}^n}\int_0^\infty f_r'\left(\dfrac{ y}{h(x)^{1/s}}\right)g_r\left(\dfrac{|A_r^{-1}(x-v_r)|_2^2+(\alpha_r^{-1}y)^2-1}{2h(A_r^{-1}(x-v_r))^{2/s}}+1\right)\\
& \quad \times \left(\left(\dfrac{- y \nabla h(x)^{1/s}}{h(x)^{2/s}}\otimes (x-v_r)\right)A_r^{-T} \oplus \dfrac{y}{h(x)^{1/s}}\alpha_r^{-1}
, \dfrac{- y \nabla h(x)^{1/s}}{h(x)^{2/s}} \right)\alpha_r^{s-1}dydx\\
&  =  \dfrac{1}{1-r}\int_{\mathbb{R}^n}\int_0^\infty f_r'\left(\dfrac{ y}{h(x)^{1/s}}\right)g_r\left(\dfrac{|A_r^{-1}(x-v_r)|_2^2+(\alpha_r^{-1}y)^2-1}{2h(A_r^{-1}(x-v_r))^{2/s}}+1\right)\\
& \quad \times \left(\left(\dfrac{- y \nabla h(x)^{1/s}}{h(x)^{2/s}}\otimes (x-v_r)\oplus \dfrac{y}{h(x)^{1/s}}\right) (A_r\oplus\alpha_r)^{-T}
, \dfrac{- y \nabla h(x)^{1/s}}{h(x)^{2/s}} \right)\alpha_r^{s-1}dydx.
\end{align*}
Finally, using the vector equality and noting that $f_r'(s)=\dfrac{1}{1-r}(f')_r(s)$, we obtain 
\begin{align*}
\nonumber (1-r)\lambda_r(\Id\oplus s) & = \dfrac{\alpha_r^{s-1}}{1-r}\int_{\mathbb{R}^n}\int_0^\infty (f')_r\left(\dfrac{ y}{h(x)^{1/s}}\right)g_r\left(\dfrac{|A_r^{-1}(x-v_r)|_2^2+(\alpha_r^{-1}y)^2-1}{2h(A_r^{-1}(x-v_r))^{2/s}}+1\right)\\
& \qquad \times \dfrac{y}{h(x)^{3/s}}\left(- \nabla h(x)^{1/s}h(x)^{1/s}\otimes x\oplus h(x)^{1/s}h(x)^{1/s}\right)dydx
\end{align*}
and
\begin{align*}
\nonumber 0  & =  \dfrac{\alpha_r^{s-1}}{1-r}\int_{\mathbb{R}^n}\int_0^\infty (f')_r\left(\dfrac{ y}{h(x)^{1/s}}\right)g_r\left(\dfrac{|A_r^{-1}(x-v_r)|_2^2+(\alpha_r^{-1}y)^2-1}{2h(A_r^{-1}(x-v_r))^{2/s}}+1\right)\dfrac{y}{h(x)^{3/s}}\\
&  \qquad \times \left(- \nabla h(x)^{1/s}h(x)^{1/s}\right)dydx.
\end{align*}
Thus,  this concludes the proof.
\end{proof}

\section{Proof of  main results}

\begin{proof}[Proof of Theorem \ref{teo3.3}]
First, we will calculate the derivative of $\bar{I}_\nu$  at the point $(M\oplus \beta, w)\in\mathcal{E}$, in the direction of $\left(V\oplus \alpha, v\right)\in T_{(M\oplus \beta, w)}\mathcal{E}$. By  \eqref{eq} and the  inner product given by \eqref{produto}, we obtain  
\begin{align*}
&\bar{I}_{\nu}'(M\oplus \beta, w)[V\oplus \alpha, v]= \int_{\B}h(x)^{1/s}F'\left(\dfrac{\langle x,Mx+w\rangle}{h(x)^{2/s}}+\beta\right)\left(\dfrac{\langle x,Vx+v\rangle}{h(x)^{2/s}}+\alpha\right)d\nu(x)\\
& =\int_{\B}h(x)^{1/s}F'\left(\dfrac{\langle x,Mx+w\rangle}{h(x)^{2/s}}+\beta\right)\left( \left\langle \left(\dfrac{x\otimes x}{h(x)^{2/s}}, \dfrac{x}{h(x)^{2/s}}\right), (V,v)\right\rangle +\dfrac{h(x)^{1/s}h(x)^{1/s}\alpha}{h(x)^{2/s}}\right)d\nu(x)\\
&= \int_{\B}h(x)^{1/s}F'\left(\dfrac{\langle x,Mx+w\rangle}{h(x)^{2/s}}+\beta\right) \left\langle \left(\dfrac{x\otimes x\oplus h(x)^{1/s}h(x)^{1/s}}{h(x)^{2/s}}, \dfrac{x}{h(x)^{2/s}}\right), (V\oplus\alpha,v)\right\rangle d\nu(x)\\
&= \int_{\B}\dfrac{1}{h(x)^{1/s}}F'\left(\dfrac{\langle x,Mx+w\rangle}{h(x)^{2/s}}+\beta\right) \left\langle \left(x\otimes x\oplus h(x)^{1/s}h(x)^{1/s}, x \right), \left(V\oplus \alpha, v\right)\right\rangle d\nu(x).
\end{align*}
Since  $(x\otimes x\oplus h(x)^{1/s}h(x)^{1/s},x)\in \mathcal{E}$,  we conclude that
$$\nabla \bar{I}_\nu(M\oplus\beta,w) = \int_{\B}\dfrac{1}{h(x)^{1/s}}F'\left(\dfrac{\langle x,Mx+w\rangle}{h(x)^{2/s}}+\beta\right) (x\otimes x\oplus h(x)^{1/s}h(x)^{1/s},x)d\nu(x).$$

The gradient of the function $\psi\left(V\oplus \alpha, v\right)= \s\mbox{tr}\left(V\oplus \alpha\right)$ is $\nabla \psi\left(V\oplus \alpha, v\right)= (\Id\oplus s, 0)$, and  $\s\R_{n+1,0}(\mathbb{R})\RR$ is the orthogonal complement of $(\Id\oplus s, 0)$ in $\mathcal{E}$. Since $(M_0\oplus \beta_0,w_0)\in \s\R_{n+1,0}(\mathbb{R})\RR$ is a singular point of  $\bar{I}_\nu$ and 0 is a regular value of $\psi$,  by the Lagrange Multiplier Theorem, there exists $\lambda\in\mathbb{R}$ such that 
$$\nabla \bar{I}_\nu(M_0\oplus \beta_0,w_0)=\lambda \nabla\psi(M_0\oplus \beta_0,w_0),$$ 
that is,
\begin{align}\label{eq9.3}
\int_{\B}\dfrac{1}{h(x)^{1/s}}F'\left(\dfrac{\langle x,M_0x+w_0\rangle}{h(x)^{2/s}}+\beta_0\right) (x\otimes x\oplus h(x)^{1/s}h(x)^{1/s})d\nu(x)&  =\lambda(\Id\oplus s) \\
\nonumber \int_{\B}\dfrac{1}{h(x)^{1/s}}F'\left(\dfrac{\langle x,M_0x+w_0\rangle}{h(x)^{2/s}}+\beta_0\right)xd\nu(x)&= 0.
\end{align}

To prove that $\lambda$ is positive,  recall that $F$ is non-decreasing, and hence 
$F'\left(\dfrac{\langle x,M_0x+w_0\rangle}{h(x)^{2/s}}+\beta_0\right)\geq 0$. Taking the trace function in  equation \eqref{eq9.3} and recalling that the support of measure $\nu$ is a subset of  points of $\B$ where $h(x)^{1/s} = \hslash_{B^{n+1}}(x)$, we arrive at 
\begin{align*}
\lambda & = \dfrac{1}{n+s}\int_{\B}\dfrac{1}{h(x)^{1/s}}F'\left(\dfrac{\langle x,M_0x+w_0\rangle}{h(x)^{2/s}}+\beta_0\right)d\nu(x).
\end{align*}
By Theorem \ref{teo6.3}, we know that $\dfrac{\langle x,M_0x+w_0\rangle}{h(x)^{2/s}}+\beta_0>0$ holds for a set of positive $\nu$-measure. Since $F'(x)\geq 0$ for all $x$ and $F'(x)>0$ for $x\geq 0$, we conclude that $\lambda>0$, and the proof is complete.
\end{proof}

\begin{lema}\label{=1}
If $(A_r\oplus \alpha_r, v_r)\in \s\mathcal{E}_+$ minimizes $\bar{L}_r$, then $\s\mbox{det}_{n+1}(A_r\oplus \alpha_r)=1$.   
\end{lema}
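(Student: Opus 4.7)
The plan is to argue by contradiction: assume the minimizer satisfies $c := \s\mbox{det}_{n+1}(A_r\oplus \alpha_r)= \alpha_r^s\det(A_r)>1$, and construct a nearby admissible point with strictly smaller value of $\bar{L}_r$. The natural candidate is to keep $A_r$ and $v_r$ fixed and only lower the height to $\alpha_r' = \alpha_r\, c^{-1/s}<\alpha_r$. Then $(\alpha_r')^s\det(A_r)=1$, so $(A_r\oplus \alpha_r', v_r)\in {}^{(s)}\mathcal{E}_+$ (indeed, on its boundary), and the only quantity that changes in the integrand of $\bar{L}_r$ is the argument of $f_r$, which decreases pointwise for every $y>0$.

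Since $f$ is non-decreasing by $\fb$ and $\fd$, the pointwise inequality
$$ f_r\!\left(\tfrac{\alpha_r' y}{h(A_rx+v_r)^{1/s}}\right)\le f_r\!\left(\tfrac{\alpha_r y}{h(A_rx+v_r)^{1/s}}\right) $$
is immediate, and the bulk of the work is obtaining strictness after integration. Here I would use two facts. First, by the definition of $f_r$ together with $\fc$–$\fd$, the function $f_r$ vanishes on $(-\infty,r]$ and is strictly increasing on $[r,\infty)$. Second, the argument already used in the proof of Proposition \ref{prop14.3} shows that if $\dfrac{\alpha_r y}{h(A_rx+v_r)^{1/s}}\le r$ held for every $(x,y)\in B^{n+1}\cap(\mathbb{R}^n\times[0,\infty))$, then $(A_r\oplus \alpha_r/r)B^{n+1}+v_r\subset {}^{(s)}\bar h$, forcing $\alpha_r^s\det(A_r)\le r^s<1$, contradicting $c>1$. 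By continuity, the set
$$ \bar U=\Bigl\{(x,y)\in B^{n+1}\cap(\mathbb{R}^n\times[0,\infty)) : \tfrac{\alpha_r y}{h(A_rx+v_r)^{1/s}}>r\Bigr\} $$
is therefore a non-empty open set of positive Lebesgue measure.

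On $\bar U$, both arguments of $f_r$ sit in a region where $f_r$ is strictly positive and strictly monotone (in the case $\alpha_r' y / h(\cdot)\ge r$ strictness comes from strict monotonicity, and in the case $\alpha_r' y / h(\cdot)<r$ from the fact that $f_r(\cdot)=0$ while $f_r(\alpha_r y/h(\cdot))>0$). Moreover, since $\bar U\subset B^{n+1}$, the factor $g_r\!\left(\tfrac{|x|_2^2+y^2-1}{2h(x)^{2/s}}+1\right)$ is bounded below by $g(0)>0$ on $\bar U$ using $\gb$–$\gd$ as in Proposition \ref{prop13.3}. Multiplying the strict pointwise inequality by this positive factor and integrating over $\bar U$ (and noting the non-strict inequality everywhere else) yields
$$ \bar L_r(A_r\oplus \alpha_r', v_r)<\bar L_r(A_r\oplus \alpha_r, v_r),$$
contradicting the minimality of $(A_r\oplus \alpha_r, v_r)$ in ${}^{(s)}\mathcal{E}_+$. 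Hence $c=1$.

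The main obstacle is the strict inequality step: the decrease of the integrand is only pointwise non-strict in general, so one must exhibit a concrete positive-measure region on which it is strict. The argument from Proposition \ref{prop14.3} is exactly what supplies such a region, and the strict monotonicity of $f$ on $[-1,\infty)$ from $\fd$ is what converts pointwise inequality into strict inequality there. Once that is in place, the rest is a clean contradiction with minimality.
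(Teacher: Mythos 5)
Your proposal is correct and takes essentially the same approach as the paper: replace $\alpha_r$ by $\alpha_r' = 1/\det(A_r)^{1/s}$, invoke the maximality of $B^{n+1}$ as the John $s$-ellipsoid (via the same $1/r$-scaling trick used in Proposition~\ref{prop14.3}) to produce a positive-measure set where the argument of $f_r$ exceeds $r$, and conclude a strict decrease of $\bar{L}_r$ from the strict monotonicity of $f$ on $[-1,\infty)$. The only cosmetic difference is that the paper defines its positive-measure set $\bar{C}$ in terms of the new height $\alpha_r'$ (so both arguments automatically lie in the strictly increasing region, avoiding your two-case split), whereas you define $\bar{U}$ via the old height $\alpha_r$.
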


\begin{proof}
Assume that $\s\mbox{det}_{n+1}(A_r\oplus \alpha_r)>1$, that is, $\alpha_r^s\det(A_r)>1$. Take $\bar{A}_r= A_r\oplus \dfrac{1}{\det(A_r)^{1/s}}$. Then, $(\bar{A}_r,v_r)\in \s\mathcal{E}_+\cap(\s\V\NN\RR) $.  Notice that since $B^{n+1}$ is the John $s$-ellipsoid  of $h$ and $\s\mbox{det}_{n+1}(\bar{A})\geq 1$, then
$$(\bar{A}_rB^{n+1}+v_r)\setminus \s\bar{r^sh}$$
must have non-empty interior. In fact,  to say that $(\bar{A}_rB^{n+1}+v_r)\setminus \s\bar{r^sh}$ has  empty interior is the same as to say that  $ \bar{A}_rB^{n+1}+v_r \subseteq (\Id\oplus r)\s\bar{h}$. But 
\begin{align*}
\s\mbox{det}_{n+1}\left((\Id\oplus r)^{-1}\bar{A}_r\right)&= {}\s\mbox{det}_{n+1}\left(A_r\oplus \frac{1}{r\det(A_r)^{1/s}}\right)
=  \frac{1}{r^s}>1,
\end{align*}
which contradicts the fact that  $B^{n+1}$ is the John $s$-function of $h$. Since $(\bar{A}_rB^{n+1}+v_r)\setminus \s\bar{r^sh}$ has non-empty interior,  there exists a subset $\bar{C}$ of $B^{n+1}$ such that $\A_{n+1}(\bar{C})>0$, and  for every $(x,y)\in\bar{C}$,  the following inequality holds
$$r h\left(A_r x+v_r\right)^{1/s}< \dfrac{y}{\det(A_r)^{1/s}}.$$
Hence, this implies that $f_r\left(\dfrac{y}{\det(A_r)^{1/s}h(A_rx+v_r)^{1/s}}\right)$ is positive. Moreover, in this set $\bar{C}$,  it holds that $g_r\left(\dfrac{|x|_2^2+y^2-1}{2h(x)^{2/s}}+1\right)$ is positive since $\dfrac{|x|_2^2+y^2-1}{2h(x)^{2/s}}\leq 0$. Thus,
\begin{align*}
\bar{L}_r(\bar{A}_r,v_r) &= \dfrac{1}{1-r}\int_{\mathbb{R}^n}\int_0^{\infty}f_r\left(\dfrac{y}{\det(A_r)^{1/s}h(A_rx+v_r)^{1/s}}\right)g_r\left(\dfrac{|x|_2^2+y^2-1}{2h(x)^{2/s}}+1\right)dydx\\
& < \dfrac{1}{1-r}\int_{\mathbb{R}^n}\int_0^{\infty}f_r\left(\dfrac{\alpha_r y}{h(A_rx+v_r)^{1/s}}\right)g_r\left(\dfrac{|x|_2^2+y^2-1}{2h(x)^{2/s}}+1\right)dydx\\
& = \bar{L}_r(A_r\oplus \alpha_r, v_r),
\end{align*}
which contradicts the minimality of $(A_r\oplus\alpha_r, v_r)$. Therefore, $\s\mbox{det}_{n+1}(A_r\oplus \alpha_r)=1.$
\end{proof}

\begin{proof}[Proof of Theorem \ref{teo7.3}]
The existence of a minimum of the functional $\bar{L}_r$ follows from the fact that it is coercive in $\s\mathcal{E}_+$ and  this set is a closed convex set. Now we assume that there are two distinct minimum of $\bar{L}_r$ in $\s\mathcal{E}_+$, say $(A\oplus\alpha,v)$ and $(B\oplus\beta, w)$. Then, by Proposition \ref{prop14.3.*}, it holds that
$$\bar{L}_r((\lambda A+(1-\lambda)B)\oplus \alpha^\lambda\beta^{1-\lambda}, \lambda v+(1-\lambda)w)= \lambda \bar{L}_r(A\oplus \alpha,v)+(1-\lambda)\bar{L}_r(B\oplus\beta,w),$$
that is, $((\lambda A+(1-\lambda)B)\oplus \alpha^\lambda\beta^{1-\lambda}, \lambda v+(1-\lambda)w)\in\s\mathcal{E}_+$ also minimizes the functional $\bar{L}_r$ and by Lemma \ref{=1}, we have 
$$\s\mbox{det}_{n+1}((\lambda A+(1-\lambda)B)\oplus \alpha^\lambda\beta^{1-\lambda})=1.$$
By  \eqref{det}, we get
\begin{align*}
1= \s\mbox{det}_{n+1}((\lambda A+(1-\lambda)B)\oplus \alpha^\lambda\beta^{1-\lambda}) & = (\alpha^s)^\lambda(\beta^s)^{1-\lambda}\det(\lambda A+ (1-\lambda)B)\\
& \geq (\alpha^s)^\lambda(\beta^s)^{1-\lambda} \det(A)^\lambda\det(B)^{1-\lambda}\\
&= (\alpha^s \det(A))^\lambda(\beta^s \det(B))^{1-\lambda}\\
& = 1. 
\end{align*}
This last equality implies that $\det(\lambda A+ (1-\lambda)B) = \det(A)^\lambda\det(B)^{1-\lambda}$, and hence we  have $A=B$. Since 
\begin{align*}
\beta^s \det(B)= 1 = \alpha^s\det(A),
\end{align*}
it follows that $\alpha=\beta$. Then, up to horizontal translation, the minimizers $(A\oplus\alpha,v)$ and $(B\oplus\beta, w)$ coincide.

Denote $M_r\oplus\beta_r = \frac{A_r\oplus \alpha_r-\bar{\Id}}{1-r}, w_r=\frac{v_r}{1-r}$. Since $(A_r\oplus \alpha_r, v_r)\in \s\mathcal{E}_+\cap(\s\V\N\RR)$, we have
$$\bar{L}_r(A_r\oplus\alpha_r,v_r)=\alpha_r^{s-1}\bar{I}_r(M_r\oplus \beta_r,w_r),$$
and $(M_r\oplus \beta_r,w_r)$ is, up to horizontal translation, the  unique global minimum of the restriction of $\bar{I}_r$ to 
$$\dfrac{\s\mathcal{E}_+\cap (\s\V\NN\RR)-\bar{\Id}\RR}{1-r}.$$

Our next step is to prove that  $(A_r\oplus\alpha_r,v_r)\rightarrow (\bar{\Id},0)$. Assume that $(A_r\oplus\alpha_r,v_r)$ does not converge to $(\bar{\Id},0)$. Since by Propositions \ref{prop13.3}  and \ref{prop15.3}, the sequence $\{(A_r\oplus\alpha_r,v_r)\}_r$ is bounded, there exists a sequence $r_k\rightarrow 1^-$ such that $\{(A_{r_k}\oplus\alpha_{r_k},v_{r_k})\}_{k}$ converges. Assume that $(A_{r_k}\oplus\alpha_{r_k},v_{r_k}) \rightarrow (A^*\oplus \alpha^*, v^*)\in  \s\mathcal{E}_+\cap(\s\V\NN\RR)$ with $(A^*\oplus \alpha^*, v^*)\neq (\bar{\Id},0)$. Again, since $B^{n+1}$ is the John $s$-ellipsoid of $h$ and $\s\mbox{det}_{n+1}(A^*\oplus\alpha^*)=1$, then the set $((A^*\oplus \alpha^*)B^{n+1}+v^*)\setminus \s\bar{h}$ has  positive Lebesgue measure. Take $\rho <1$ such that the set  $(\rho(A^*\oplus \alpha^*)B^{n+1}+v^*)\setminus \s\bar{h}$ has positive Lebesgue measure. For
large $k$, we have $\rho(A^*\oplus\alpha^*)B^{n+1}+v^* \subseteq (A_{r_k}\oplus \alpha_{r_k})B^{n+1}+v_{r_k}$. By Fatou's lemma,
\begin{align*}
&\liminf_{k\rightarrow +\infty}\bar{L}_{r_k}(A_{r_k}\oplus\alpha_{r_k},v_{r_k})  \\
& = \liminf_{k\rightarrow +\infty} \dfrac{1}{1-r_k}\int_{\mathbb{R}^n}\int_0^\infty f_{r_k}\left(\dfrac{\alpha_{r_k}y}{h(A_{r_k}x+v_{r_k})^{1/s}}\right)g_{r_k}\left(\dfrac{|x|_2^2+y^2-1}{2h(x)^{2/s}}+1\right)dydx\\
& \geq \liminf_{k\rightarrow +\infty} \dfrac{\alpha_{r_k}^{s-1}}{1-r_k}\int_{\mathbb{R}^n\setminus \s\bar{h}}\int_0^\infty f_{r_k}\left(\dfrac{y}{h(x)^{1/s}}\right)g_{r_k}\left(\dfrac{|A_{r_k}^{-1}(x-v_{r_k})|_2^2+(\alpha_{r_k}^{-1}y)^2-1}{2h(A_{r_k}^{-1}(x-v_{r_k}))^{2/s}}+1\right)dydx. 
\end{align*}
Notice that if $(\tilde{x},\tilde{y})\in B^{n+1}$ and $(x,y)= \rho(A^*\oplus\alpha^*)(\tilde{x},\tilde{y})+v^*$, then
$$(A_{r_k}^{-1}\oplus \alpha_{r_k}^{-1})(\rho(A^*\oplus\alpha^*)(\tilde{x},\tilde{y})+v^*-v_{r_k})\in (A_{r_k}\oplus \alpha_{r_k})^{-1}(A_{r_k}\oplus\alpha_{r_k})B^{n+1} = B^{n+1},$$
from which $|A_{r_k}^{-1}(x-v_{r_k})|_2^2+(\alpha_{r_k}^{-1}y)^2\leq 1$. And by $\gb$ it follows that
$$g_{r_k}\left(\dfrac{|A_{r_k}^{-1}(x-v_{r_k})|_2^2+(\alpha_{r_k}^{-1}y)^2-1}{2h(A_{r_k}^{-1}(x-v_{r_k}))}+1\right)\geq g_{r_k}(1)=g(0).$$
Thus,
\begin{align*}
\liminf_{k\rightarrow +\infty}\bar{L}_{r_k}(A_{r_k}\oplus\alpha_{r_k},v_{r_k}) \geq &\liminf_{k\rightarrow +\infty} \dfrac{\alpha_{r_k}^{s-1}}{1-r_k}\int_{(\rho(A^*\oplus\alpha^*)B^{n+1}+v^*) \setminus\s\bar{h}}\int_0^\infty f_{r_k}\left(\dfrac{y}{h(x)^{1/s}}\right)g(0)dydx\\
=& \infty,
\end{align*}
which contradicts the boundedness of the minimizer $(A_r\oplus\alpha_r,v_r)$, since by Proposition \ref{prop15.3}
$$\bar{L}_{r_k}(A_{r_k}\oplus\alpha_{r_k},v_{r_k}) \leq \bar{L}_r(\bar{\Id},0)\leq C.$$

To finish, we need to prove that $\s\mbox{tr}\left(\dfrac{M_r\oplus \beta_r}{||M_r\oplus ||_F}\right)\rightarrow 0$.  A simple calculate shows that  $\s$tr is the differential of $\s\mbox{det}_{n+1}$ at $\bar{\Id}\in\W\NN$. By Taylor expansion,  we have
$$\s\mbox{det}_{n+1}(\bar{\Id}+\bar{V})= 1+\langle \Id\oplus s,\bar{V}\rangle+o(||\bar{V}||_F)=1+\s\mbox{tr}(\bar{V})+ o(||\bar{V}||_F),$$
where $\frac{o(\epsilon)}{\epsilon}\rightarrow 0$ as $\epsilon\rightarrow 0$. Taking $\bar{V}=(1-r)(M_r\oplus\beta_r)$, we get
\begin{align*}
1 &= \s\mbox{det}_{n+1}(A_r\oplus\alpha_r)\\
  &= \s\mbox{det}_{n+1}(\bar{\Id}+(1-r)(M_r\oplus\beta_r))\\
  &= \ 1+(1-r)\s\mbox{tr}(M_r\oplus\beta_r)+o((1-r)||M_r\oplus\beta_r||_F).
\end{align*}
Therefore,
$$\s\mbox{tr}\left(\dfrac{M_r\oplus\beta_r}{||M_r\oplus\beta_r||_F}\right)= \dfrac{\s\mbox{tr}(M_r\oplus\beta_r)}{||M_r\oplus\beta_r||_F}=-\dfrac{o((1-r)||M_r\oplus\beta_r||_F)}{(1-r)||M_r\oplus\beta_r||_F}\rightarrow 0$$
as $r\rightarrow 1^-$.    
\end{proof}

\begin{proof}[Proof of Theorem \ref{teo8.3}]
Let us denote by   $o((1-r)^a)$ (resp. $o(1)$)  any function of the involved parameters $M,\beta, w, r,s, t,x$, satisfying 
$$\lim_{r\rightarrow1^-}\dfrac{o((1-r))^a}{(1-r)^a}=0\left(\mbox{resp. } \lim_{r\rightarrow 1^-}o(1)=0\right),$$
where the limits are uniform in compact sets with respect to the parameters. Similarly, $O(1)$  denotes any bounded function.

By Taylor expansion, we have the following expression for all $x,w\in\mathbb{R}^n$ and $M\oplus \beta\in \bar{B}_r$ (recall that $\bar{B}_r$ is the domain of the functional $\bar{I}_r$)
\begin{align*}
|(\Id+(1-r)M)^{-1}(x-(1-r)w)|_2  &= |x-(1-r)(Mx+w)+o(1-r)|_2\\
&=  |x|_2-(1-r)\left\langle\dfrac{x}{|x|_2}, Mx+w\right\rangle+o(1-r).
\end{align*}
For short, we denote
\begin{align*}
& \psi(M;\beta; w; (x,t))\\
&= \dfrac{|(\Id+(1-r)M)^{-1}(x-(1-r)w)|_2^2 +((1+(1-r)\beta)^{-1}(1+(1-r)t)h(x)^{1/s})^2-1}{2h((\Id+(1-r)M)^{-1}(x-(1-r)w))^{2/s}}+1.    
\end{align*}
Since
\begin{align*}
\bar{I}_r(M\oplus\beta,w)& = \dfrac{1}{1-r}\int_{\mathbb{R}^n}\int_0^\infty f_r\left(\dfrac{y}{h(x)^{1/s}}\right)\\
& \times g_r\left(\dfrac{|(\Id+(1-r)M)^{-1}(x-(1-r)w)|_2^2 +((1+(1-r)\beta)^{-1}y)^2-1}{2h((\Id+(1-r)M)^{-1}(x-(1-r)w))^{2/s}}+1\right)dydx,
\end{align*}
substituting  $\frac{y}{h(x)^{1/s}}=1+(1-r)t$, we obtain
\begin{align*}
\bar{I}_r(&M\oplus\beta,w)=\int_{\mathbb{R}^n}\int_{-\frac{1}{1-r}}^\infty h(x)^{1/s}(1+(1-r)t)f_r\left(1+(1-r)t\right)g_r\left(\psi(M;\beta; w; (x,t))\right)dtdx.
\end{align*}
To calculate $\psi(M;\beta; w; (x,t))$, note the following expansions
\begin{align*}
 \bullet \ \ & |(\Id+(1-r)M)^{-1}(x-(1-r)w)|_2^2 = |x|_2^2+(1-r)^2\left\langle\dfrac{x}{|x|_2}, Mx+w\right\rangle^2\\
 &-2|x|_2(1-r)\left\langle\dfrac{x}{|x|_2}, Mx+w\right\rangle+o(1-r)^2 +2o(1-r)\left(|x|_2-(1-r)\left\langle\dfrac{x}{|x|_2}, Mx+w\right\rangle\right);\\
\bullet \ \  & ((1+(1-r)\beta)^{-1}h(x)^{1/s}(1+(1-r)t))^2 = \dfrac{h(x)^{2/s}(1+2(1-r)t+(1-r)^2t^2)}{(1+(1-r)\beta)^2};\\
\bullet \ \ & 2h((\Id+(1-r)M)^{-1}(x-(1-r)w))^{2/s}=2h(x-(1-r)(Mx+w)+o(1-r))^{2/s}.
\end{align*}
Thus, we obtain the following expression for $ \psi(M; \beta; w; (x,t)) $
\begin{align*}
\psi(M;\beta; w; (x,t)) &= \dfrac{(|x|_2^2+h(x)^{2/s}-1) + (2\beta(|x|_2^2-1)(1-r)}{2(1+(1-r)\beta)^2h(x-(1-r)(Mx+w)+o(1-r))^{2/s}(1-r)}\\
& \quad + \dfrac{(1-r)\left\langle\dfrac{x}{|x|_2}, Mx+w\right\rangle -2\left\langle x, Mx+w\right\rangle}{2h(x-(1-r)(Mx+w)+o(1-r))^{2/s}}\\ 
&\quad  +\dfrac{2h(x)^{2/s}t + (1-r)h(x)^{2/s}t^2 + (1-r)\beta^2(|x|_2^2-1)}{2(1+(1-r)\beta)^2h(x-(1-r)(Mx+w)+o(1-r))^{2/s}}\\
& \quad + \left[\dfrac{o(1-r)^2}{1-r}+\dfrac{2o(1-r)}{1-r}\left(|x|_2-(1-r)\left\langle \dfrac{x}{|x|_2}, Mx+w\right\rangle\right)\right]\\
& \qquad \times \dfrac{1}{2h(x-(1-r)(Mx+w)+o(1-r))^{2/s}}. 
\end{align*}

Consider the following sets:
\begin{align*}
\Lambda & =\{x\in\mathbb{R}^n: |x|_2^2+h(x)^{2/s}-1\leq 0\}\\
\Lambda^c & =\{x\in\mathbb{R}^n: |x|_2^2+h(x)^{2/s}-1>0\}.  
\end{align*}

Note that
\begin{itemize}
\item [$(i)$] If $x\in \Lambda^c$,  since $h$ is bounded and $(1+(1-r)\beta)\leq (1+\beta)$, we have
$$\dfrac{|x|_2^2+h(x)^{2/s}-1}{2(1+(1-r)\beta)^2h(x-(1-r)(Mx+w)+o(1-r))^{2/s}(1-r)}\longrightarrow +\infty$$
as $r\rightarrow 1^-$. Thus, by $\gee$, it holds that $g\mid_{\Lambda^c}\stackrel{r\rightarrow 1^-}{\longrightarrow}$0;

\item [$(ii)$] If $x\in \Lambda$, then $|x|_2^2+h(x)^{2/s}=1$. Indeed,
$$|x|_2^2+h(x)^{2/s}<1\Leftrightarrow \sqrt{|x|_2^2+h(x)^{2/s}}< 1 \Leftrightarrow (x,h(x)^{1/s})\in \mbox{int }B^{n+1}\subset \mbox{int }\s\bar{h}.$$

\noindent But, as we know $(x,h(x)^{1/s})\in \partial\s\bar{h}$, for all $x \in \mathbb{R}^n$. Hence,
$$\Lambda=\{x\in\mathbb{R}^n: |x|_2^2+h(x)^{2/s}-1\leq 0\}=\{x\in\mathbb{R}^n: |x|_2^2+h(x)^{2/s}-1= 0\};$$ 

\item [$(iii)$] $h(x-(1-r)(Mx+w)+o(1-r))^{2/s} \stackrel{r\rightarrow 1^-}{\longrightarrow}h(x)^{2/s}$ since $h$ is  continuous.
\end{itemize}

By $\fc$, the integrand is 0 for $t<-1$ and by $(ii)$, we obtain 
\begin{align}
\nonumber & \bar{I}_r(M\oplus\beta,w) = \int_{\mathbb{R}^n}\int_{-1}^\infty h(x)^{1/s}(1+(1-r)t)f(t) g\left(\psi(M;\beta; w; (x,t))\right)dtdx\\
\nonumber &= \int_{\Lambda^c}\int_{-1}^\infty h(x)^{1/s}(1+(1-r)t)f(t) g\left(\psi(M;\beta; w; (x,t))\right)dtdx\\
\nonumber &\quad +  \int_{\Lambda}\int_{-1}^\infty h(x)^{1/s}(1+(1-r)t)f(t) g\left(\psi(M;\beta; w; (x,t))\right)dtdx\\
\nonumber &= \int_{\Lambda^c}\int_{-1}^\infty h(x)^{1/s}(1+(1-r)t)f(t)\\
\nonumber & \qquad \times g\left(  \dfrac{|x|_2^2+h(x)^{2/s}-1 + (1-r)O(1)+ (1-r)t (2h(x)^{2/s}+o(1)) +o(1)}{2(1+(1-r)\beta)^2h(x-(1-r)(Mx+w)+o(1-r))^{2/s}(1-r)}\right)dtdx\\
\nonumber & \quad + \int_{\Lambda}\int_{-1}^\infty h(x)^{1/s}(1+(1-r)t)f(t) \\
& \qquad \times g\left(  \dfrac{-2\beta(1-|x|_2^2+o(1))-2\langle x,Mx+w +o(1)\rangle +t(2h(x)^{2/s}+o(1))+o(1)}{2(1+(1-r)\beta)^2h(x-(1-r)(Mx+w)+o(1-r))^{2/s}}\right)dtdx.\label{eq85}
\end{align}
To prove that $\bar{I}_r$ converges to $\bar{I}_1$, when $r\rightarrow 1^-$, in compact sets, consider a convergent sequence $(M_k\oplus\beta_k,w_k)\rightarrow (M\oplus\beta,w)$ and $r_k\rightarrow 1^-$. By $(i)$ and $\gee$, the function $g$ in the  first integral is zero for $t>C$ where $C$ is independent of $k$. Since the functions  $f,g$ are thus uniformly bounded in the support of both integrals and it holds $(iii)$,   we may apply the
Dominated Convergence Theorem in \eqref{eq85} to obtain 
\begin{align*}
\lim_{k\rightarrow \infty}\bar{I}_{r_k}(M_k\oplus\beta_k, w_k)
&= \int_{\Lambda}\int_{-1}^\infty h(x)^{1/s}f(t)g\left(t -\dfrac{\langle x,Mx+w\rangle}{h(x)^{2/s}}-\beta\right)dtdx\\
&= \int_{\Lambda} h(x)^{1/s}\int_{-1}^\infty f(t)g\left(t -\dfrac{\langle x,Mx+w\rangle}{h(x)^{2/s}}-\beta\right)dtdx.
\end{align*}
Thus, we conclude
$$\bar{I}_1(M \oplus \beta, w) = \int_{\Lambda} h(x)^{1/s} F\left( \frac{\langle x, Mx + w \rangle}{h(x)^{2/s}} + \beta \right)  dx, $$
where \(F(x) = f \ast \bar{g}(x)\) is the convolution of \(f\) and \(\bar{g}(x) = g(-x)\).

Finally, we show that $F$ satisfies the conditions of  Theorem \ref{teo3.3}. First, $F$ is non-negative because $f(t)g(t-x)\geq 0$ for all $(x,t)\in\mathbb{R}^n\times\mathbb{R}$. Second, $F$ is non-decreasing since  both $f$ and $\bar{g}$  are non-decreasing. Specifically, we have
\begin{align*}
F'(x)&= -\int_{-\infty}^\infty f(t)g'(t-x)dt=  \int_{-\infty}^\infty f(x-t)\bar{g}'(t)dt\geq 0.
\end{align*}
By assumptions $\fa$ and $\ga$, $f$ and $g$ are locally Lipschitz, and thus absolutely continuous and differentiable almost everywhere.  Therefore, $F$ is  twice differentiable almost everywhere, and by $\fd,\gc,\gd,\gee$,
$$F''(x)= \int_{-1}^{1}f'(x-t)\bar{g}'(t)dt\geq 0,$$
which shows that $F$ is convex. To establish strict convexity on $[0,\infty)$, take any $x>0$. If $F''(x)=0$, then since $\bar{g}'(t)>0$ on $(-1,1)$, the last inequality implies that $f'=0$ on a set of positive measure inside $(x-1,x+1)$,  which contradicts $\fd$.
\end{proof}

In order to prove  Theorem \ref{teo9.3}, we before need to prove that the family of minimizers of the functionals $\bar{I}_r$ admits a convergent subsequence. 

\begin{lema}\label{bounded}
For every $r\in(1/2,1)$, let $(M_r\oplus\beta_r,w_r)$ be a minimizer of the functional $\bar{I}_r$ as given by Theorem \ref{teo7.3}. The sequence $\{(M_r\oplus\beta_r,w_r)\}_r$ is bounded. 
\end{lema}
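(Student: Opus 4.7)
The plan is to argue by contradiction. Suppose $\|(M_r\oplus\beta_r, w_r)\|$ is unbounded; then along some subsequence $r_k \to 1^-$, $t_k := \|(M_{r_k}\oplus\beta_{r_k}, w_{r_k})\| \to \infty$, and after a further extraction the unit vectors $T_k := (M_{r_k}\oplus\beta_{r_k}, w_{r_k})/t_k$ converge to some $T_* = (M_*\oplus\beta_*, w_*)$ of norm $1$. The first step is to show $T_* \in \s\R_{n+1,0}(\mathbb{R})\RR$: Theorem~\ref{teo7.3} gives $(A_{r_k}\oplus\alpha_{r_k}, v_{r_k}) = \bar{\Id}+(1-r_k)(M_{r_k}\oplus\beta_{r_k}, w_{r_k}) \to (\bar{\Id},0)$, so $(1-r_k)t_k \to 0$, and the Taylor expansion of $\s\mbox{det}_{n+1}$ about $\bar{\Id}$ applied to $\s\mbox{det}_{n+1}(A_{r_k}\oplus\alpha_{r_k})=1$ (Lemma~\ref{=1}) yields $\s\mbox{tr}(M_{r_k}\oplus\beta_{r_k})/t_k \to 0$, exactly as in the last display of the proof of Theorem~\ref{teo7.3}, hence $\s\mbox{tr}(T_*) = 0$.

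The heart of the argument is to show $\bar{I}_1(\tau T_*) \le C$ for every $\tau > 0$, which will contradict coercivity of $\bar{I}_1|_{\s\R_{n+1,0}(\mathbb{R})\RR}$. Fix $\tau > 0$, set $\lambda_k := \tau/t_k \in (0,1]$ for large $k$, and apply the convex* property (Proposition~\ref{prop14.3.*}) to the pair $(A_{r_k}\oplus\alpha_{r_k}, v_{r_k})$ and $(\bar{\Id}, 0)$ with weight $\lambda_k$. Using the minimality of $(A_{r_k}\oplus\alpha_{r_k},v_{r_k})$ and Proposition~\ref{prop15.3}, this yields
\begin{align*}
\bar{L}_{r_k}\!\bigl((1-\lambda_k)\Id + \lambda_k A_{r_k} \oplus \alpha_{r_k}^{\lambda_k},\, \lambda_k v_{r_k}\bigr)
\;\le\; \lambda_k \bar{L}_{r_k}(A_{r_k}\oplus\alpha_{r_k}, v_{r_k}) + (1-\lambda_k)\bar{L}_{r_k}(\bar{\Id},0) \;\le\; C.
\end{align*}

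A direct change of variables in the integrals shows that the identity stated in Section~1 extends to $\bar{I}_r(M\oplus\beta, w) = \s\mbox{det}_{n+1}(A\oplus\alpha)\cdot \bar{L}_r(A\oplus\alpha, v)$ on the full domain of $\bar{I}_r$, where $(A,\alpha,v)=(\Id+(1-r)M, 1+(1-r)\beta, (1-r)w)$. Since $A_{r_k}\to\Id$ and $\alpha_{r_k}\to 1$, the factor $\s\mbox{det}_{n+1}$ at the interpolated point tends to $1$ uniformly in $\lambda_k \in [0,1]$, so the displayed inequality translates to $\bar{I}_{r_k}(P_k) \le (1+o(1))C$ with $P_k := \bigl(\lambda_k M_{r_k}\oplus (\alpha_{r_k}^{\lambda_k}-1)/(1-r_k),\, \lambda_k w_{r_k}\bigr)$. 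A short Taylor expansion $(1+\epsilon_k)^{\lambda_k} = 1+\lambda_k\epsilon_k + O(\lambda_k^2\epsilon_k^2)$ with $\epsilon_k := (1-r_k)\beta_{r_k}\to 0$ gives $(\alpha_{r_k}^{\lambda_k}-1)/(1-r_k) \to \tau\beta_*$, and combined with $\lambda_k M_{r_k}\to \tau M_*$ and $\lambda_k w_{r_k}\to \tau w_*$ this proves $P_k \to \tau T_*$. Theorem~\ref{teo8.3} then yields $\bar{I}_1(\tau T_*) \le C$ for every $\tau > 0$. Since $\bar{I}_1$ is convex (its form in Theorem~\ref{teo8.3} is exactly the functional $\bar{I}_\nu$ of Theorem~\ref{teo3.3} with $\nu$ the Lebesgue measure on $\Lambda$) and, by the hypothesis of Theorem~\ref{teo9.3} combined with Lemma~\ref{unico_minimo}, coercive on $\s\R_{n+1,0}(\mathbb{R})\RR$, the nonzero $T_*$ in this subspace forces $\bar{I}_1(\tau T_*)\to\infty$, a contradiction.

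The main obstacle is the middle step: linear convexity of $\bar{I}_r$ in $(M,\beta,w)$ is not established, so no naive ray from the minimizer to $0$ comes with a uniform bound. The convex* property is precisely the right tool because its geometric interpolation in $\alpha$ is compatible with the constraint $\s\mbox{det}_{n+1}\ge 1$ built into $\s\mathcal{E}_+$, and the residual determinantal factor $\s\mbox{det}_{n+1}(P_A\oplus P_\alpha)\to 1$ costs nothing in the limit.
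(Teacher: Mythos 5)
Your proof is correct and takes a genuinely different route from the paper's. The paper derives a contradiction \emph{at a fixed $r$}: assume some minimizer escapes $\bar{B}_{2R}$, scale it back to a shell $\partial\bar{B}_\rho$ with $R\leq\rho\leq 2R$ using the geometric interpolation in $\alpha$, use continuity of $\bar{I}_1$ on that compact shell together with an $\s\mbox{tr}$ control to lower-bound $\bar{I}_r$ there by $C+1/2$, and then contradict minimality via the convex* property and a determinantal factor controlled by $\frac{C+1/2}{C+1/4}$. You instead pass to a subsequential limit direction $T_*$ of unit norm, verify $T_*\in\s\R_{n+1,0}(\mathbb{R})\RR$ by the same trace-expansion used at the end of Theorem~\ref{teo7.3}, and then for every fixed $\tau>0$ apply the convex* inequality with $\lambda_k=\tau/t_k\to 0$ to get $\bar{I}_1(\tau T_*)\leq C$ in the limit (via the identity $\bar{I}_r = {}\s\mbox{det}_{n+1}\cdot\bar{L}_r$, which is indeed the fully general form of the equality the paper states only on $\s\V\NN\RR$, as a change of variables confirms, and via the uniform convergence of Theorem~\ref{teo8.3} since $P_k\to\tau T_*$ is bounded). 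Letting $\tau\to\infty$ contradicts coercivity. The ingredients — Propositions~\ref{prop14.3.*} and~\ref{prop15.3}, Lemma~\ref{unico_minimo}, Theorems~\ref{teo7.3} and~\ref{teo8.3} — are the same, but your arrangement eliminates the shell bookkeeping and the $\varepsilon$-choice for $\s\mbox{tr}$, yielding a conceptually cleaner ``compactness plus coercivity'' argument. The only thing worth flagging is that, like the paper's own proof, yours tacitly uses the hypothesis of Theorem~\ref{teo9.3} that $\bar{I}_1$ restricted to $\s\R_{n+1,0}(\mathbb{R})\RR$ has a unique (hence, by convexity, isolated) minimum, so that Lemma~\ref{unico_minimo} supplies coercivity; this is not stated in the lemma itself but is implicit in both arguments.
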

\begin{proof}
By Lemma \ref{unico_minimo}, the functional $\bar{I}_1$ is coercive. Therefore, there exists a constant $R>0$ such that for any  $(M\oplus\beta,w)\in\s\R_{n+1,0}(\mathbb{R})\RR$, if  $||(M\oplus\beta,w)||\geq R$, then 
$$\bar{I}_1(M\oplus\beta,w)\geq C+2,$$
where $C\geq \bar{L}_r(\bar{\Id},0)$ is given by Proposition \ref{prop15.3}.

Let $\bar{B}_{2R}=\{(M\oplus\beta,w)\in\R\NN\RR: ||(M\oplus\beta,w)||\leq 2R \}$. By Theorem \ref{teo8.3}, there exists $r_0\in (1/2,1)$ such that for every $r\in(r_0,1)$ and $(M\oplus\beta,w)\in\bar{B}_{2R},$
$$|\bar{I}_r(M\oplus\beta,w)-\bar{I}_1(M\oplus\beta,w)|\leq 1/2.$$

We now  show  that  for every $r\in(r_0,1)$, $(M_r\oplus\beta_r,w_r)\in\bar{B}_{2R}$. Assume by contradiction that there exists $r\in(r_0,1)$ such that  $(M_r\oplus\beta_r,w_r)\not\in \bar{B}_{2R}$. Then, there exists $\lambda <1$ such that
$$||\lambda(M_r\oplus\beta_r,w_r)||=2R.$$ 
By \eqref{media} and since $\left.\frac{\partial }{\partial t}(1+t\beta_r)^\lambda\right|_{t=0}=\lambda\beta_r$, it  holds that for $t\geq 0$, $(1+t\beta_r)^\lambda\leq 1+t\lambda \beta_r$. Thus, for $r\rightarrow 1^-$, we have 
$$R\leq \rho=\left|\left|\left(\lambda M_r\oplus\left(\dfrac{(1+(1-r)\beta_r)^\lambda-1}{1-r}\right), \lambda w_r\right)\right|\right|\leq ||\lambda(M_r\oplus\beta_r,w_r)||=2R.$$

Since $\bar{I}_1$ is continuous on the compact set $\bar{B}_{2R}$, there is $\varepsilon>0$ such that  
$$\bar{I}_1(M\oplus\beta,w)\geq C+1$$
for every $(M\oplus\beta,w)\in\partial \bar{B}_{\rho}=\{(M\oplus\beta,w)\in\R\NN\RR: ||(M\oplus\beta,w)||= \rho, R\leq \rho\leq 2 R \}$ with $\s\mbox{tr}(M\oplus\beta)<\varepsilon$. 

By Theorem \ref{teo7.3},  it holds that  $A_r\oplus\alpha_r\rightarrow \bar{\Id}$ as $r\rightarrow 1^-$,  then increasing $r_0$ if necessary, we may assume for every $r\in(r_0,1)$ and $\lambda\in[0,1]$,
$$\mbox{det}_{n+1}(\lambda(A_r\oplus\alpha_r)+(1-\lambda)\bar{\Id})\leq \dfrac{C+1/2}{C+1/4}=1+\dfrac{1}{4C+1}$$
and, again by Theorem \ref{teo7.3}, we have that $\left|\s\mbox{tr}\left(\frac{M_r\oplus \beta_r}{||M_r\oplus\beta_r||_F}\right)\right|\leq \frac{\varepsilon}{2R}$.

Moreover,
$$\left|\s\mbox{tr}\left(\lambda M_r\oplus\left(\dfrac{(1+(1-r)\beta_r)^\lambda-1}{1-r}\right)\right)\right| \leq 
|\s\mbox{tr}(\lambda(M_r\oplus\beta_r))|\leq \dfrac{||\lambda(M_r\oplus\beta_r)||_F}{2R}\varepsilon\leq \varepsilon,$$
then we  obtain
\begin{align*}
\bar{I}_r\left(\lambda M_r\oplus\left(\dfrac{(1+(1-r)\beta_r)^\lambda-1}{1-r}\right), \lambda w_r\right)& \geq  \bar{I}_1\left(\lambda M_r\oplus\left(\dfrac{(1+(1-r)\beta_r)^\lambda-1}{1-r}\right), \lambda w_r\right)-1/2\\
&  \geq C+1/2. 
\end{align*}

Now, using that $(M_r\oplus\beta_r,w_r)=\left(\dfrac{A_r\oplus\alpha_r-\bar{\Id}}{1-r},\dfrac{v_r}{1-r}\right)$, we have
$$(((1-r)\lambda M_r+\Id)\oplus (1+(1-r)\beta)^\lambda,(1-r) \lambda w_r)= ((\lambda A_r+(1-\lambda)\Id)\oplus\alpha_r^\lambda, \lambda v_r),$$
and since $\lambda A_r+(1-\lambda)\Id \in\R\Y$ for $r\rightarrow 1^-$, we obtain
\begin{align*}
\s\mbox{det}_{n+1}((\lambda A_r+(1-\lambda)\Id)\oplus\alpha_r^\lambda)& = (\alpha_r^s)^\lambda\det(\lambda A_r+(1-\lambda)\Id)\\
& \geq  (\alpha_r^s)^\lambda \det(A_r)^\lambda\det(\Id)^{1-\lambda}\\
&  = (\s\mbox{det}_{n+1}(A_r\oplus\alpha_r))^\lambda \ {}\s\mbox{det}_{n+1}(\bar{\Id})\\
& \geq  1.
\end{align*}
Hence $((\lambda A_r+(1-\lambda)\Id)\oplus\alpha_r^\lambda,\lambda v_r)\in\s\mathcal{E}_+$ and
\begin{align*}
&\bar{L}_r((\lambda A_r+(1-\lambda)\Id)\oplus \alpha_r^\lambda,\lambda v_r)\\
&=\dfrac{1}{1-r}\int_{\mathbb{R}^n}\int_0^\infty f_r\left(\dfrac{\alpha_r^\lambda y}{h((\lambda A_r+(1-\lambda)\Id)x+\lambda v_r)^{1/s}}\right)g_r\left(\dfrac{|x|_2^2+y^2-1}{2h(x)^{2/s}}+1\right)dydx\\
& =  \dfrac{1}{1-r}\int_{\mathbb{R}^n}\int_0^\infty \dfrac{1}{\alpha_r^\lambda\det(\lambda A_r+(1-\lambda)\Id)}f_r\left(\dfrac{y}{h(x)^{1/s}}\right) \\
& \quad \times g_r\left(\dfrac{|(\lambda A_r+(1-\lambda)\Id)^{-1}(x-\lambda v_r)|_2^2+(\alpha_r^{-\lambda}y)^2-1}{2h((\lambda A_r + (1-\lambda)\Id)^{-1}(x-\lambda v_r))^{2/s}}+1\right)dydx\\
& =  \dfrac{1}{1-r}\int_{\mathbb{R}^n}\int_0^\infty \dfrac{1}{\alpha_r^\lambda\det(\lambda A_r+(1-\lambda)\Id)}f_r\left(\dfrac{y}{h(x)^{1/s}}\right)\\
& \quad \times g_r\left(\dfrac{|(\Id+(1-r)\lambda M_r)^{-1}(x-(1-r)\lambda w_r)|_2^2+((1+(1-r)\beta_r)^\lambda)^{-1}y)^2-1}{2h((\Id+(1-r)\lambda M_r)^{-1}(x-(1-r)\lambda w_r))^{2/s}}+1\right)dydx.
\end{align*}
A simple calculation using  \eqref{media}  shows the inequality
\begin{align*}
\alpha_r^\lambda \det(\lambda A_r+(1-\lambda)\Id)\leq \mbox{det}_{n+1}(\lambda (A_r\oplus\alpha_r)+(1-\lambda)\bar{\Id}),
\end{align*}
and thus
\begin{align*}
& \bar{L}_r((\lambda A_r+(1-\lambda)\Id)\oplus \alpha_r^\lambda,\lambda v_r)\geq  \dfrac{1}{1-r}\int_{\mathbb{R}^n}\int_0^\infty \dfrac{1}{\mbox{det}_{n+1}(\lambda (A_r\oplus\alpha_r)+(1-\lambda)\bar{\Id})}f_r\left(\dfrac{y}{h(x)^{1/s}}\right)\\
& \qquad  \times g_r\left(\frac{|(\Id+(1-r)\lambda M_r)^{-1}(x-(1-r)\lambda w_r)|_2^2+((1+(1-r)\beta_r)^\lambda)^{-1}y)^2-1}{2h((\Id+(1-r)\lambda M_r)^{-1}(x-(1-r)\lambda w_r))^{2/s}}+1\right)dydx\\
& =  \frac{\bar{I}_r\left(\lambda M_r\oplus\left(\dfrac{(1+(1-r)\beta_r)^\lambda-1}{1-r}\right), \lambda w_r\right)}{\mbox{det}_{n+1}(\lambda(A_r\oplus\alpha_r)+(1-\lambda)\bar{\Id})}\\
& \geq  \left(\dfrac{C+1/2}{C+1/4}\right)^{-1}(C+1/2)\\
&  \geq\bar{L}_r(\bar{\Id},0)+1/4. 
\end{align*}

Since $\bar{L}_r(\bar{\Id},0)\geq \bar{L}_r(A_r\oplus\alpha_r,v_r)$, we obtain the inequalities
$$\bar{L}_r((\lambda A_r+(1-\lambda)\Id)\oplus\alpha_r^\lambda,\lambda v_r)> \bar{L}_r(A_r\oplus \alpha_r,v_r)$$
and
$$\bar{L}_r((\lambda A_r+(1-\lambda)\Id)\oplus\alpha_r^\lambda,\lambda v_r)> \bar{L}_r(\bar{\Id},0),$$
which contradicts the fact that $\bar{L}_r$ is $convex^*$
(see Proposition \ref{prop14.3.*}). Therefore, $(M_r\oplus\beta_r,w_r)\in\bar{B}_{2R}$ for all $r\in(r_0,1)$ and we conclude the proof.  
\end{proof}

\begin{lema}\label{limit}
If $(M_0\oplus\beta_0,w_0)$ is the unique global minimum of $\bar{I}_1$, then $(M_r\oplus\beta_r,w_r)$ converges to $(M_0\oplus\beta_0,w_0)$.    
\end{lema}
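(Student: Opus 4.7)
The plan is the standard compactness plus uniqueness argument, using three ingredients already established in the paper: Lemma~\ref{bounded} for boundedness, the normalized $s$-trace convergence from Theorem~\ref{teo7.3}, and the uniform convergence $\bar{I}_r\rightarrow \bar{I}_1$ on compact sets given by Theorem~\ref{teo8.3}. I would fix an arbitrary sequence $r_k\rightarrow 1^-$, extract a convergent subsequence of $(M_{r_k}\oplus\beta_{r_k},w_{r_k})$ via Bolzano--Weierstrass, and show that its limit must coincide with $(M_0\oplus\beta_0,w_0)$; the usual subsequence argument then upgrades this to convergence of the entire family.

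Call the subsequential limit $(M^*\oplus\beta^*,w^*)\in\mathcal{E}$. First I would verify that it lies in the domain $\s\R_{n+1,0}(\mathbb{R})\RR$ where $\bar{I}_1$ is being minimized: Theorem~\ref{teo7.3} gives $\s\mbox{tr}(M_r\oplus\beta_r)/||M_r\oplus\beta_r||_F\rightarrow 0$, and boundedness provided by Lemma~\ref{bounded} then forces $\s\mbox{tr}(M_r\oplus\beta_r)\rightarrow 0$, hence $\s\mbox{tr}(M^*\oplus\beta^*)=0$.

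Next, by the uniform convergence of Theorem~\ref{teo8.3} applied on a compact set containing all the $(M_{r_k}\oplus\beta_{r_k},w_{r_k})$, one has $\bar{I}_{r_k}(M_{r_k}\oplus\beta_{r_k},w_{r_k})\rightarrow \bar{I}_1(M^*\oplus\beta^*,w^*)$. To bound this limit from above, I would compare with an admissible perturbation of $(M_0\oplus\beta_0,w_0)$. Setting $\phi(r,t)=\s\mbox{det}_{n+1}(\bar{\Id}+(1-r)(M_0\oplus(\beta_0+t)))=(1+(1-r)(\beta_0+t))^s\det(\Id+(1-r)M_0)$, the relation $\s\mbox{tr}(M_0\oplus\beta_0)=0$ yields $\phi(r,0)=1+O((1-r)^2)$, while $\partial_t\phi(r,0)=s(1-r)(1+O(1-r))\neq 0$ for $r$ close to $1$. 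The implicit function theorem then produces a correction $\beta_0^{(r)}=\beta_0+O(1-r)$ such that $(M_0\oplus\beta_0^{(r)},w_0)$ belongs to the restricted constraint set $\frac{\s\mathcal{E}_+\cap(\s\V\NN\RR)-\bar{\Id}\RR}{1-r}$. Minimality of $(M_{r_k}\oplus\beta_{r_k},w_{r_k})$ over that set together with uniform convergence of $\bar{I}_r$ on a compact neighborhood of $(M_0\oplus\beta_0,w_0)$ gives
\begin{align*}
\bar{I}_1(M^*\oplus\beta^*,w^*)=\lim_{k}\bar{I}_{r_k}(M_{r_k}\oplus\beta_{r_k},w_{r_k})\leq \lim_{k}\bar{I}_{r_k}(M_0\oplus\beta_0^{(r_k)},w_0)=\bar{I}_1(M_0\oplus\beta_0,w_0),
\end{align*}
and the assumed uniqueness of the global minimum of $\bar{I}_1$ forces $(M^*\oplus\beta^*,w^*)=(M_0\oplus\beta_0,w_0)$.

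The main technical step is precisely the construction of the admissible comparison sequence: the constraint $\s\mbox{det}_{n+1}=1$ defines a smooth hypersurface whose tangent space at $\bar{\Id}$ is exactly $\s\R_{n+1,0}(\mathbb{R})$, so $(M_0\oplus\beta_0)$ lies on the linearization but may leave the constraint at second order for $r<1$, and this mismatch must be corrected by a scalar perturbation whose size is controlled uniformly as $r\rightarrow 1^-$. Once that correction is produced, everything else is a routine subsequence argument combined with the uniqueness hypothesis, and the lemma follows.
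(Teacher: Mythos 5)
Your argument follows essentially the same route as the paper: extract a convergent subsequence by Lemma~\ref{bounded}, construct a comparison family that is admissible for the finite-$r$ constraint set and converges to a prescribed competitor, invoke uniform convergence from Theorem~\ref{teo8.3}, and finish with the uniqueness hypothesis. The only substantive difference is how the admissible comparison is produced. The paper rescales by a power of the $s$-determinant, defining $\s(M\oplus\beta)^{(r)}=\frac{{}\s\mathrm{det}_{n+1}(\bar{\Id}+(1-r)(M\oplus\beta))^{-1/(n+s)}(\bar{\Id}+(1-r)(M\oplus\beta))-\bar{\Id}}{1-r}$, which is an explicit global normalization valid for any $(M\oplus\beta,w)\in\s\R_{n+1,0}(\mathbb{R})\RR$, whereas you correct only the $\beta$-entry via an implicit-function argument. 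Your construction also works (solving $\phi(r,t)=1$ explicitly shows $\beta_0^{(r)}=\beta_0+O(1-r)$), but note that $\partial_t\phi(r,0)\sim s(1-r)\to 0$, so a clean way to state it is to solve the scalar equation directly rather than appeal to the implicit function theorem at $r=1$; the quadratic defect $\phi(r,0)-1=O((1-r)^2)$ divided by the derivative $\sim s(1-r)$ gives the $O(1-r)$ correction. One point in your favor: you explicitly check that the subsequential limit $(M^*\oplus\beta^*,w^*)$ lies in $\s\R_{n+1,0}(\mathbb{R})\RR$ using the normalized $s$-trace convergence from Theorem~\ref{teo7.3} together with boundedness, a step the paper uses implicitly but does not write out.
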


\begin{proof}
Take $M\oplus\beta\in\s\R_{n+1,0}(\mathbb{R})$ and define
$$\s(M\oplus\beta)^{(r)}= \dfrac{\s\mbox{det}_{n+1}(\bar{\Id}+(1-r)(M\oplus\beta))^{-1/(n+s)}(\bar{\Id}+(1-r)(M\oplus\beta))-\bar{\Id}}{1-r}.$$
Note that $(\s(M\oplus\beta)^{(r)},w)$ belongs to 
$$\dfrac{\s\mathcal{E}_+\cap (\s\V\NN\RR)-\bar{\Id}\RR}{1-r}$$ for $r$ close to 1. We also have
\begin{align*}
\lim_{r\rightarrow 1^-} \s(M\oplus\beta)^{(r)}  &= \lim_{r\rightarrow 1^-} \left(\dfrac{\s\mbox{det}_{n+1}(\bar{\Id}+(1-r)(M\oplus\beta))^{-1/(n+s)}-1}{1-r}\bar{\Id}\right.\\
&\quad \left. + \ {}\s\mbox{det}_{n+1}(\bar{\Id}+(1-r)(M\oplus\beta))^{-1/(n+s)}M\oplus\beta\right)\\
& =  \left.\dfrac{\partial}{\partial t}\right|_{t=0}(\s\mbox{det}_{n+1}(\bar{\Id}+t(1-r)(M\oplus\beta))^{-1/(n+s)}\bar{\Id})+M\oplus\beta\\
& = \dfrac{-1}{n+s}\s\mbox{tr}(-M\oplus\beta)\bar{\Id}+M\oplus\beta\\
& = \ M\oplus\beta.
\end{align*}

By Lemma \ref{bounded},  the sequence $(M_r\oplus\beta_r,w_r)$ is bounded, then for every convergent subsequence $(M_{r_k}\oplus\beta_{r_k},w_{r_k})\rightarrow (M_0\oplus\beta_0,w_0)$ as $r_k\rightarrow 1^-$, and for every $(M\oplus\beta,w)\in\s\R_{n+1,0}(\mathbb{R})\RR$, we have    
$$\bar{I}_{r_k}(M_{r_k}\oplus\beta_{r_k},w_{r_k})\rightarrow \bar{I}_1(M_0\oplus\beta_0, w_0),$$
and 
$$\bar{I}_{r_k}(M_{r_k}\oplus \beta_{r_k}, w_{r_k})\leq \bar{I}_{r_k}(\s(M\oplus\beta)^{(r_k)},w)\rightarrow \bar{I}_1(M\oplus\beta, w).$$
Thus,  $(M_0\oplus\beta_0,w_0)$ is the (unique) minimum of $\bar{I}_1$,  and we conclude that $(M_r\oplus\beta_r,w_r)\rightarrow (M_0\oplus\beta_0,w_0)$ as required.
\end{proof}

By Lemma \ref{bounded}, the sequence \( (M_r \oplus \beta_r, w_r) \) is bounded. Therefore, for every convergent subsequence \( (M_{r_k} \oplus \beta_{r_k}, w_{r_k}) \to (M_0 \oplus \beta_0, w_0) \) as \( r_k \to 1^- \), and for every \( (M \oplus \beta, w) \in \s \mathbb{R}_{n+1,0}(\mathbb{R}) \mathbb{R} \), we have
\[
\bar{I}_{r_k}(M_{r_k} \oplus \beta_{r_k}, w_{r_k}) \to \bar{I}_1(M_0 \oplus \beta_0, w_0),
\]
and
\[
\bar{I}_{r_k}(M_{r_k} \oplus \beta_{r_k}, w_{r_k}) \leq \bar{I}_{r_k}(\s(M \oplus \beta)^{(r_k)}, w) \to \bar{I}_1(M \oplus \beta, w).
\]
Thus, \( (M_0 \oplus \beta_0, w_0) \) is the (unique) minimum of \( \bar{I}_1 \), and we deduce that \( (M_r \oplus \beta_r, w_r) \to (M_0 \oplus \beta_0, w_0) \) as desired.

\begin{proof}[Proof of Theorem \ref{teo9.3}]
By Lemma \ref{limit}, we have
\begin{align*}
\left.\dfrac{\partial (A_r\oplus\alpha_r,v_r)}{\partial r}\right|_{r=1}= \lim_{r\rightarrow 1^-}\dfrac{(A_r\oplus\alpha_r,v_r)-(\bar{\Id},0)}{r-1}=\lim_{r\rightarrow 1^-} (-M_r\oplus\beta_r,-w_r)=-(M_0\oplus \beta_0,w_0).
\end{align*}

Now, let  $\delta$ be any continuous function with compact support and, as in the proof of Theorem \ref{teo8.3}, consider the sets $\Lambda^c=\{x\in\mathbb{R}^n: |x|_2^2+h(x)^{2/s}>1 \},  \Lambda=\{x\in\mathbb{R}^n: |x|_2^2+h(x)^{1/s}= 1\}$. We have 
\begin{align*}
&\dfrac{1}{1-r}\int_{\mathbb{R}^n}\int_0^\infty \delta(x)(f')_r\left(\dfrac{y}{h(x)^{1/s}}\right)g_r\left(\dfrac{|\tilde{A}_r^{-1}(x-\tilde{v}_r)|_2^2+(\tilde{\alpha}_r^{-1}y)^2-1}{h(\tilde{A}_r^{-1}(x-\tilde{v}_r))^{2/s}}+1\right)\dfrac{y}{h(x)^{3/s}}\tilde{\alpha}_r^{s-1}dydx \\
& = \int_{\Lambda^c}\int_{-1}^\infty \delta(x)(f')_r(1+(1-r)t)g_r\left(\dfrac{|\tilde{A}_r^{-1}(x-\tilde{v}_r)|_2^2+(\tilde{\alpha}_r^{-1}(1+(1-r)t)h(x)^{1/s})^2-1}{h(\tilde{A}_r^{-1}(x-\tilde{v}_r))^{2/s}}+1\right)\\
& \qquad \times h(x)^{1/s}\dfrac{(1+(1-r)t)h(x)^{1/s}}{h(x)^{3/s}}\tilde{\alpha}_r^{s-1}dydx \\
& \quad + \int_{\Lambda}\int_{-1}^\infty \delta(x)(f')_r(1+(1-r)t)g_r\left(\dfrac{|\tilde{A}_r^{-1}(x-\tilde{v}_r)|_2^2+(\tilde{\alpha}_r^{-1}(1+(1-r)t)h(x)^{1/s})^2-1}{h(\tilde{A}_r^{-1}(x-\tilde{v}_r))^{2/s}}+1\right) \\
& \qquad \times h(x)^{1/s}\dfrac{(1+(1-r)t)h(x)^{1/s}}{h(x)^{3/s}}\tilde{\alpha}_r^{s-1}dydx\\
& =\int_{\Lambda^c}\int_{-1}^\infty \delta(x)f'(t)g\left(  \dfrac{|x|_2^2+h(x)^{2/s}-1 + (1-r)O(1)+ (1-r)t (2h(x)^{2/s}+o(1)) +o(1)}{2(1+(1-r)\beta)^2h(x-(1-r)(Mx+w)+o(1-r))^{2/s}(1-r)}\right)  \\
& \qquad \times \dfrac{(1+(1-r)t)}{h(x)^{1/s}}\tilde{\alpha}_r^{s-1}dydx  \\
& \quad + \int_{\Lambda}\int_{-1}^\infty \delta(x)f'(t)g\left(  \dfrac{-2\beta(1-|x|_2^2+o(1))-2\langle x,Mx+w +o(1)\rangle +t(2h(x)^{2/s}+o(1))+o(1)}{2(1+(1-r)\beta)^2h(x-(1-r)(Mx+w)+o(1-r))^{2/s}}\right) \\
& \qquad \times \dfrac{(1+(1-r)t)}{h(x)^{1/s}}\tilde{\alpha}_r^{s-1}dydx .
\end{align*}
Hence, by the Dominated Convergence Theorem, we obtain
\begin{align*}
\nonumber \dfrac{1}{1-r}\int_{\mathbb{R}^n}\int_0^\infty \delta(x)(f')_r\left(\dfrac{y}{h(x)^{1/s}}\right)g_r\left(\dfrac{|\tilde{A}_r^{-1}(x-v_r)|_2^2+(\tilde{\alpha}_r^{-1})y)^2-1}{h(\tilde{A}_r^{-1}(x-\tilde{v}_r))^{2/s}}+1\right)\dfrac{y}{h(x)^{3/s}}\tilde{\alpha}_r^{s-1}dydx \\
\longrightarrow \int_{\Lambda}\delta(x) \dfrac{1}{h(x)^{1/s}}F'\left(\dfrac{\langle x, M_0x+w_0\rangle}{h(x)^{2/s}}+\beta_0\right)dx
\end{align*}
as $r\rightarrow 1^-$.

Finally, since $(A_r\oplus\alpha_r, v_r)$ minimizes the functional $\bar{L}_r$,  by Lemma \ref{lema_auxialiar}, there exists $\lambda_r>0$ such that
\begin{align*}
\dfrac{1}{1-r}\int_{\mathbb{R}^n}\int_0^\infty &(f')_r\left(\dfrac{y}{h(x)^{1/s}}\right)g_r\left(\dfrac{|\tilde{A}_r^{-1}(x-v_r)|_2^2+(\tilde{\alpha}_r^{-1})y)^2-1}{h(\tilde{A}_r^{-1}(x-\tilde{v}_r))^{2/s}}+1\right)\dfrac{y}{h(x)^{3/s}}\tilde{\alpha}_r^{s-1}\\
& \times\left(- \nabla h(x)^{1/s}h(x)^{1/s}\otimes x\oplus h(x)^{1/s}h(x)^{1/s}\right)dydx = \lambda_r (\Id\oplus s,0)
\end{align*}
and
\begin{align*}
\dfrac{1}{1-r}\int_{\mathbb{R}^n}\int_0^\infty &(f')_r\left(\dfrac{y}{h(x)^{1/s}}\right)g_r\left(\dfrac{|\tilde{A}_r^{-1}(x-v_r)|_2^2+(\tilde{\alpha}_r^{-1})y)^2-1}{h(\tilde{A}_r^{-1}(x-\tilde{v}_r))^{2/s}}+1\right)\dfrac{y}{h(x)^{3/s}}\tilde{\alpha}_r^{s-1} \\
& \times \left(- \nabla h(x)^{1/s}h(x)^{1/s}\right)dydx = 0.
\end{align*}
By equations \eqref{eq5.30}, \eqref{eqI}, \eqref{eqII},  we conclude the desired result.
\end{proof}

\section*{Acknowledgments}
The author would like to thank Julián Haddad  for fruitful discussions and comments. This project was supported, in part, by the Austrian Science Fund (FWF) Grant-DOI: 10.55776/P34446.

\end{document}